\newcommand{\C}{{\mathbb C}}       
\newcommand{\R}{{\mathbb R}} 
\newcommand{\Rn}{{\mathbb R}^n} 
\newcommand{\Rd}{{\mathbb R}^d} 
\newcommand{\N}{{\mathbb N}}       %
\newcommand{\Z}{{\mathbb Z}}       
\newcommand{\DD}{{\mathcal D}}
\newcommand{\FF}{{\mathcal F}}
\newcommand{\HH}{{\mathcal H}}
\newcommand{\GG}{{\mathcal G}}
\newcommand{\UU}{{\mathcal U}}
\newcommand{\AZ}{{\mathcal A}}
\newcommand{\ha}{{\mathcal H}}
\newcommand{\RR}{{\mathcal R}}
\newcommand{\cF}{{\mathcal F}}
\newcommand{\cB}{{\mathcal B}}
\newcommand{\cG}{{\mathcal G}}
\newcommand{\cT}{{\mathcal T}}
\newcommand{\ra}{\rightarrow}
\newcommand{\wfi}{{\wt \vphi_R}}
\newcommand{\wf}{{\wt \vphi}}
\renewcommand{\emptyset}{{\varnothing}}
\newcommand{\mang}{{\measuredangle}}
\newcommand{\diam}{{\rm diam}}
\newcommand{\dist}{{\rm dist}}
\newcommand{\sgn}{{\rm sgn}}
\newcommand{\rf}[1]{{\eqref{#1}}}
\newcommand{\supp}{\operatorname{supp}}
\newcommand{\vphi}{{\varphi}}
\newcommand{\ve}{{\varepsilon}}
\newcommand{\vv}{}{{\vspace{2mm}}}
\newcommand{\vvv}{}{{\vspace{3mm}}}
\newcommand{\wt}[1]{{\widetilde{#1}}}
\newcommand{\de}{{\partial_{out}}}
\renewcommand{\de}{{\delta}}
\renewcommand{\a}{{\alpha}}
\newcommand{\xh}{x^H}
\newcommand{\stm}{{\setminus}}
\newcommand{\cys}{{\wt\chi_{y,s}}}
\newcommand{\rest}{{\lfloor}}
\newtheorem{theorem}{Theorem}[section]
\newtheorem{lemma}[theorem]{Lemma}
\newtheorem{coro}[theorem]{Corollary}
\newtheorem{propo}[theorem]{Proposition}
\newtheorem*{lemma*}{Lemma}
\theoremstyle{definition}
\newtheorem{definition}[theorem]{Definition}
\theoremstyle{remark}
\newtheorem{rem}[theorem]{Remark}
\numberwithin{equation}{section}
\newcommand{\brem}{\begin{rem}}
\newcommand{\erem}{\end{rem}}
\title{Square functions and uniform rectifiability}
\author{Vasileios Chousionis}
\address{Vasileios Chousionis. Department of Mathematics and Statistics,
P.O. Box 68,  FI-00014 University of Helsinki, Finland}
\email{vasileios.chousionis@helsinki.fi}
\thanks{V.C. was funded by the Academy of Finland Grant SA 267047. Also, partially supported by the ERC Advanced Grant 320501, while visiting Universitat Aut\`onoma de Bar\-ce\-lo\-na}
\author{John Garnett}
\address{John Garnett. Department of Mathematics, University of California at Los Angeles. 6363 Math Sciences Building, Los Angeles, CA 90095-1555}
\email{jbg@math.ucla.edu} 
\thanks{J.G. was partially supported by NSF DMS 1217239 and the IPAM  long program Interactions Between Analysis and Geometry, Spring 2013.}
\author{Triet Le}
\address{Triet Le. Department of Mathematics, University of Pennsylvania. David Rittenhouse Lab. 209 South 33rd Street, Philadelphia, PA 19104.}
\email{trietle@math.upenn.edu}
\thanks{T.L. was partially supported by NSF DMS 1053675 and 
the IPAM long program  Interactions Between Analysis and Geometry, Spring 2013.}
\author{Xavier Tolsa}
\address{Xavier Tolsa. Instituci\'{o} Catalana de Recerca i Estudis Avan\c{c}ats (ICREA) and Departament de Ma\-te\-m\`a\-ti\-ques, Universitat Aut\`onoma de Bar\-ce\-lo\-na, Catalonia}
\email{xtolsa@mat.uab.cat}
\thanks{X.T. was funded by the an Advanced Grant of the European Research
Council (programme FP7/2007-2013), by agreement 320501. Also, partially supported by grants 2009SGR-000420 (Generalitat de Catalunya) and MTM-2010-16232 (Spain).}
\begin{document}

\begin{abstract}
In this paper it is shown that an Ahlfors-David $n$-dimensional measure $\mu$
on $\R^d$ is uniformly $n$-rectifiable if and only if for any ball $B(x_0,R)$ centered at $\supp(\mu)$,
$$
\int_0^R \int_{x\in B(x_0,R)} \left|\frac{\mu(B(x,r))}{r^n} - \frac{\mu(B(x,2r))}{(2r)^n}
\right|^2\,d\mu(x)\,\frac{dr}r \leq c\, R^n.$$
Other characterizations of uniform $n$-rectifiability in terms of
smoother square functions are also obtained.
\end{abstract}

\maketitle

\section{Introduction}

Given $0<n<d$, a Borel set $E \subset \Rd$ is said to be  $n$-rectifiable if it is contained
in a countable union  of $n$-dimensional $C^1$ manifolds and  
a set of zero $n$-dimensional Hausdorff measure $\ha^n$.
On the other hand, a Borel measure $\mu$ in $\R^d$ is called $n$-rectifiable if it is of the form
$\mu = g\,\HH^n|_E$, where $E$ is a Borel $n$-rectifiable set and $g$ is positive and $\HH^n$ integrable on $E$.
Rectifiability is a qualitative notion,  but
 David and Semmes in their landmark works \cite{DS1} and \cite{DS2} introduced the more quantitative notion of uniform rectifiability. 
To define uniform rectifiability  we need first to recall the notion of Ahlfors-David regularity.

We say a Radon measure $\mu$ in $\R^d$
is $n$-dimensional Ahlfors-David regular with constant $c_0$ if
\begin{equation}
\label{ad}
c_0^{-1}r^{n}\leq \mu(B(x,r)) \leq c_0\,r^{n}\quad\mbox{for all $x\in\supp(\mu)$,
$0<r\leq\diam(\supp(\mu))$}.
\end{equation}
 For short, we sometimes omit the constant $c_0$ and call $\mu$  $n$-AD-regular. It follows easily that such a measure $\mu$ must be of the  form  $\mu =h\,\HH^n|_{\supp(\mu)}$, where  $h$ is a positive function bounded from above and from below.
 
An $n$-AD-regular measure $\mu$ is  uniformly  $n$-rectifiable if there exist $\theta, M >0$ such that for all $x \in \supp(\mu)$ and all $r>0$ 
there exists a Lipschitz mapping $\rho$ from the ball $B_n(0,r)$ in $\R^{n}$ to $\R^d$ with $\text{Lip}(\rho) \leq M$ such that$$
\mu (B(x,r)\cap \rho(B_{n}(0,r)))\geq \theta r^{n}.$$
When $n =1$, $\mu$ is uniformly $1$-rectifiable if and only if $\supp(\mu)$ is contained in a rectifiable curve in $\R^d$ on which the arc length measure satisfies  \eqref{ad}. A Borel set $E \subset \Rd $ is
 $n$-AD-regular if $\mu=\ha^n|_{E}$ is $n$-AD-regular, and it is called uniformly $n$-rectifiable if, further, $\ha^n|_{E}$ is uniformly $n$-rectifiable.
Thus $\mu$ is an uniformly $n$-rectifiable measure if and only if $\mu = h\,\HH^n|_E$ where $h >0$ is bounded
above and below and $E$ is an uniformly $n$-rectifiable closed set. 

Uniform rectifiability is closely connected to the geometric study of singular integrals. In \cite{D2} David proved that if $E \subset \Rd$ is uniformly $n$-rectifiable, then for any 
convolution kernel $K:\mathbb{R}^{d} \setminus \{0\}\rightarrow \mathbb{R}$
satisfying
\begin{equation}
\label{ker}
K(-x)=-K(x)\;  \text{ and }\ \;\left| \nabla^j K(x)\right| \leq c_j \left| x\right| ^{-n-j},  \text{ for}\ x\in \mathbb{R}^{d}\setminus \{0\}, \ j=0,1,2,\dots,
\end{equation} 
the associated singular integral operator $T_Kf(x) = \int K(x-y)\,f(y)\,d\HH^n|_E(y)$  
is bounded in $L^2(\ha^n|_E)$. David and Semmes in \cite{DS1} proved conversely that the
$L^2(\ha^n|_E)$-boundedness of all singular integrals $T_K$ with kernels satisfying \eqref{ker} implies
that  $E$ is uniformly
$n$-rectifiable. However if one only assumes the boundedness of some particular singular integral operators satisfying
\eqref{ker}, then the situation becomes much more delicate.

In \cite{MMV} Mattila, Melnikov and Verdera proved that if $E$ is an $1$-AD regular set, the Cauchy transform is bounded in $L^2(\ha^n|_E)$ if and only if $E$ is uniformly $1$-rectifiable. It is remarkable that their proof depends crucially on a special subtle positivity property of the Cauchy kernel related to the so-called Menger curvature. 
See \cite{CMPT} for other examples of $1$-dimensional homogeneous convolution kernels 
whose $L^2$-boundedness is equivalent to uniform rectifiability, again because of Menger curvature. 
Recently in \cite{ntov} it was shown that in the codimension $1$ case, that is, for $n=d-1$, 
if  $E$ is $n$-AD-regular, then the vector valued Riesz kernel $x/|x|^{n+1}$ defines a bounded operator on $L^2(\ha^n|_E)$ if and only if $E$ is uniformly $n$-rectifiable. In this case, the notion of Menger curvature is not 
applicable and the proof relies instead on the harmonicity of the kernel $x/|x|^{n+1}$.
It is an open problem if the analogous result holds for 
$1<n<d-1$. 

In this paper we prove several characterizations of uniform $n$-rectifiability in terms of square functions. Our first characterization involves the following difference of densities 
$$\Delta_\mu(x,r) := \frac{\mu(B(x,r))}{r^n} - \frac{\mu(B(x,2r))}{(2r)^n}$$
and reads as follows.

\begin{theorem}
\label{main1}
Let $\mu$ be an $n$-AD-regular measure. Then $\mu$ is uniformly $n$-rectifiable if and only if there exists a constant $c$ such that, for any ball $B(x_0,R)$ centered at $\supp(\mu)$,
\begin{equation}\label{eqsq2}
\int_0^R \int_{x\in B(x_0,R)} |\Delta_\mu(x,r)|^2\,d\mu(x)\,\frac{dr}r \leq c\, R^n.
\end{equation}
\end{theorem}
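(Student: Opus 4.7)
The proof naturally splits into necessity ($\Longrightarrow$) and sufficiency ($\Longleftarrow$), the latter being substantially deeper.

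\textbf{Necessity.} Assume $\mu$ is uniformly $n$-rectifiable. I would use the David--Semmes corona decomposition: the lattice of David--Semmes cubes associated with $\mu$ splits into trees $\{T_i\}$ with roots $\{R_i\}$ satisfying a Carleson packing condition $\sum_{R_i \subset Q} \mu(R_i) \lesssim \mu(Q)$, such that on each tree $T_i$ the set $\supp\mu$ is approximated, in a bilateral sense, by a Lipschitz graph $\Gamma_i$ with small constant. Fix a tree $T_i$, a point $x \in \supp\mu$ near $\Gamma_i$, and a scale $r$ associated with $T_i$. A direct computation in which one compares $\mu(B(x,r))/r^n$ and $\mu(B(x,2r))/(2r)^n$ against the corresponding quantities for $\HH^n|_{\Gamma_i}$ yields an estimate of the form
\[
|\Delta_\mu(x,r)|^2 \lesssim \beta_{2,\mu}(x,Cr)^2 + (\text{graph deviation error}),
\]
where $\beta_{2,\mu}$ is a standard David--Semmes $\beta$-coefficient. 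Since $\beta_{2,\mu}^2\,d\mu\,dr/r$ is known to be a Carleson measure for uniformly rectifiable $\mu$, and since the contribution from scales straddling two different trees is controlled by the packing of the roots, this produces \eqref{eqsq2}.

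\textbf{Sufficiency.} Assume now the Carleson estimate \eqref{eqsq2}. The plan is to verify a known geometric characterization of uniform rectifiability; the most natural target is the Carleson condition on Tolsa's $\alpha$-numbers $\alpha_\mu(x,r)$, which measure the Wasserstein-type distance from $\mu|_{B(x,r)}$ to the closest flat $n$-dimensional AD-regular measure. Concretely I would proceed as follows. First, at each David--Semmes cube $Q$ perform a stopping time argument to extract a family of sub-cubes $\{Q_j\} \subset Q$ with $\sum_j \mu(Q_j) \leq \tau\,\mu(Q)$ for small $\tau$, on the complement of which one controls a summed version of $|\Delta_\mu(x,r)|^2$ across all scales between $Q_j$ and $Q$. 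Second, deduce that on this good region the densities $\mu(B(x,r))/r^n$ are essentially constant in $r$ in an $L^2(d\mu)$ sense. Third, and most delicate, convert this scale-invariance of density into geometric flatness, which I expect to take the quantitative form
\[
\alpha_\mu(x_0,R)^2 \lesssim \int_{\delta R}^{R} \int_{B(x_0,\sigma R)} |\Delta_\mu(x,r)|^2\,d\mu(x)\,\frac{dr}{r} + \text{error},
\]
for suitable $\delta,\sigma$. Finally, integrate this inequality against $d\mu(x_0)\,dR/R$ and invoke the $\alpha$-number characterization of uniform rectifiability due to Tolsa.

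\textbf{The main obstacle} is the third step. Density is a coarse scalar invariant, and there is no tautological route from control of $|\Delta_\mu|$ to geometric flatness; indeed one can imagine highly non-flat AD-regular measures with densities that oscillate only mildly. The natural strategy is a quantitative Preiss-type argument: in the limit where $\Delta_\mu \equiv 0$, every tangent measure of $\mu$ has scale-invariant density and, in the AD-regular setting, must be flat, so $\mu$ itself is close to flat at each scale. Making this passage quantitative will require a compactness--contradiction scheme, pigeonholing over a wide range of scales to recover explicit dependence of constants, and sharp use of both sides of AD-regularity to prevent mass from escaping to infinity or concentrating in lower-dimensional subspaces. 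This is where the technical heart of the argument will lie.
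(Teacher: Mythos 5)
Your high-level decomposition into necessity and sufficiency is of course right, but both halves have a genuine gap, and the second half misattributes where the difficulty actually lives.

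\textbf{Necessity.} The pointwise-at-a-scale estimate $|\Delta_\mu(x,r)|^2 \lesssim \beta_{2,\mu}(x,Cr)^2 + (\text{graph deviation error})$ that you propose cannot hold. The $\beta$-numbers are density-blind: if $\mu = h\,\HH^n|_L$ with $L$ an $n$-plane and $h$ a bounded oscillating density, then $\beta^\mu_p(x,r) = 0$ at every scale, yet $\Delta_\mu(x,r)$ is generically nonzero, since it is exactly the density that $\Delta_\mu$ is sensing. The paper therefore works with Tolsa's $\alpha$-numbers (which compare $\mu$ to $c\,\HH^n|_L$, optimizing over the constant $c$ as well as the plane, and hence do control density deviations) rather than $\beta$-numbers, and invokes the Carleson packing condition for $\alpha$-numbers (Theorem~\ref{teotol}) instead of the corona decomposition. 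Beyond the wrong choice of coefficient, your plan also glosses over the non-smoothness of the kernel $r^{-n}\chi_{B(0,r)} - (2r)^{-n}\chi_{B(0,2r)}$: the mass that straddles $\partial B(x,r)$ is not controlled by a single-scale geometric comparison, and the paper identifies this as ``the most delicate part of the paper,'' devoting all of Section~\ref{sec:urbdd} to a Daubechies-wavelet decomposition of the kernel (with separate treatment of large-scale, stopping-time, and fine-scale wavelet terms) to deal with it. Calling this a ``direct computation'' signals the real work has not been confronted. You also have necessity and sufficiency exactly backwards in terms of which requires the new technical effort: in the paper the harder new work is on the necessity side.

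\textbf{Sufficiency.} Your plan is conceptually in the right neighborhood but is routed to the wrong target and understates the size of the third step. The paper does \emph{not} try to deduce a Carleson condition on $\alpha$-numbers; it deduces the weak constant density condition (WCD, Definition~\ref{wcd}) and then cites Theorem~\ref{wcdur}, i.e.\ the recent result of \cite{toluni} that WCD $\Rightarrow$ UR in all codimensions (which in turn leans on Preiss's classification of uniform measures). That is precisely the missing bridge you identify in your step~3 (``convert scale-invariance of density into geometric flatness''): it is not a ``quantitative Preiss-type argument'' one carries out inside this proof, it is the entire content of a separate deep theorem, invoked as a black box. Moreover the paper first passes from the indicator-kernel square function \eqref{eqsq2} to a smooth square function \eqref{eqsqdif} by a clean convex-combination trick (writing $R^{-n}e^{-s^2/R^2}$ as an average of $r^{-n}\chi_{[0,r]}(s)$ and swapping integrals), and only then runs the compactness argument; you omit this regularization step, which is what makes the compactness argument tractable. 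Your step~2 (compactness forcing approximately constant densities at good scales) does match the spirit of Lemmas~\ref{lemcompact}--\ref{lemweak}, but the endpoint must be WCD, not $\alpha$-number control, since an $n$-uniform non-flat measure (which Preiss's theory allows a priori) would have $\Delta_\mu \equiv 0$ while its local $\alpha$-number near a singular point need not be small.
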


Recall that a celebrated theorem of Preiss \cite{Preiss} asserts that a Borel measure $\mu$ in $\R^d$ is $n$-rectifiable if and only if the density
$\lim_{r\to0}\dfrac{\mu(B(x,r))}{r^n}$
exists and is positive for $\mu$-a.e.\ $x\in\R^d$. In a sense, Theorem \ref{main1} can be considered as a square function version of
Preiss' theorem for uniform rectifiability. On the other hand, let us mention that the ``if'' implication
in our theorem relies on some of the deep results by Preiss in \cite{Preiss}.

It is also worth comparing Theorem \ref{main1} to some earlier results
from Kenig and Toro \cite{KT},
David, Kenig and Toro \cite{DKT} and Preiss, Tolsa and Toro \cite{PTT}.
In these works it is shown among other things that, given $\alpha>0$, there
exists $\beta(\alpha)>0$ such that 
if $\mu$ is $n$-AD-regular and for each compact set $K$ there exists some constant $c_K$
such that
$$\left|\frac{\mu(B(x,r))}{r^n} - \frac{\mu(B(x,tr))}{(tr)^n}\right| \leq c_K \,r^\alpha
\quad \mbox{for $1<t\leq 2$, $x\in K\cap\supp(\mu)$, $0<r\leq 1$,}$$
then $\mu$ is supported on an $C^{1+\beta}$ $n$-dimensional manifold union  a closed
set with zero $\mu$-measure. This result can be thought of as the H\"older version
of one of the implications in Theorem \ref{main1}.

We also want to mention  the forthcoming work \cite{ADT} by Azzam, David and Toro for some other conditions on a doubling measure which imply rectifiability. One of the conditions in \cite{ADT} quantifies the difference of the measure at different close scales in terms of  the Wasserstein distance $W_1$. 
In our case, the square function in Theorem \ref{main1} just involves the difference of the $n$-dimensional densities of two concentric balls such that the largest radius doubles the smallest one.

Motivated by the recent work \cite{LM} studying local scales on curves and surfaces, which was
the starting point of this paper's research,  we also prove smooth versions of  Theorem \ref{main1}. For any Borel function $\vphi: \Rd \ra \R$ let 
$$\vphi_t(x)=\frac{1}{t^n} \vphi \left(\frac xt \right), \, t>0$$
and define
$$\Delta_{\mu,\vphi} (x,t):= \int\bigr(\vphi_t (y-x)-\vphi_{2t}(y-x)\bigr)\,d\mu(y),$$
whenever the integral makes sense.
If $\vphi$ is smooth, let
$$\partial_\vphi (x,t)=t \partial_t\, \vphi_t(x)$$
and define
$$\wt \Delta_{\mu,\vphi} (x,t):= \int\partial_\vphi (y-x,t)\,d\mu(y),$$
again whenever the integral makes sense. Our second theorem characterizes  uniform $n$-rectifiable $n$-AD-regular measures using the square functions associated with   $\Delta_{\mu,\vphi}$ and $\wt \Delta_{\mu,\vphi}$. 

\begin{theorem} 
\label{main2}Let $\vphi:\Rd \ra \R$ be of the form $e^{-|x|^{2N}}$, with $N\in\N$, or $(1+|x|^2)^{-a}$, with $a>n/2$. 
Let $\mu$ be an $n$-AD-regular measure in $\R^d$. The following are equivalent:
\begin{enumerate}
\item[(a)] $\mu$ is uniformly $n$-rectifiable.
\item [(b)]There exists a constant $c$ such that for any ball $B(x_0,R)$ centered at $\supp(\mu)$,
\begin{equation}
\label{eqsqdif}
\int_0^R \int_{x\in B(x_0,R)} |\Delta_{\mu,\vphi}(x,r)|^2\,d\mu(x)\,\frac{dr}r \leq c\, R^n.
\end{equation}
\item[(c)] There exists a constant $c$ such that for any ball $B(x_0,R)$ centered at $\supp(\mu)$,
\begin{equation}
\label{eqsqdifsm}
\int_0^R \int_{x\in B(x_0,R)} |\wt \Delta_{\mu,\vphi}(x,r)|^2\,d\mu(x)\,\frac{dr}r \leq c\, R^n.
\end{equation}
\end{enumerate}
\end{theorem}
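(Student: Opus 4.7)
The plan is to prove (a)$\Rightarrow$(b), (a)$\Rightarrow$(c), (c)$\Rightarrow$(b), and (b)$\Rightarrow$(a), which together give the equivalence. The organizing observation is that for any $n$-plane $L$ and $x\in L$ one has $\vphi_t*(\HH^n|_L)(x)=c_\vphi$ independent of $t$ (by the radial symmetry of $\vphi$ and the $n$-homogeneity of $\HH^n|_L$), so both $\Delta_{\mu,\vphi}$ and $\wt\Delta_{\mu,\vphi}$ vanish on flat measures --- the same scale-invariance that drives Theorem \ref{main1}. This suggests extracting Theorem \ref{main2} from Theorem \ref{main1} plus a calculation that reduces the smooth kernel to ball differences.

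For (a)$\Rightarrow$(b), write $\vphi(x)=\psi(|x|)$ and use the layer-cake formula with $\eta:=-\psi'\geq 0$: a short change of variables gives
\begin{equation*}
\Delta_{\mu,\vphi}(x,t)=\int_0^\infty u^n\eta(u)\,\Delta_\mu(x,ut)\,du.
\end{equation*}
The weight $u^n\eta(u)$ decays super-exponentially for the Gaussian and is $O(u^{n-2a-1})$ at infinity for the Bessel kernel, and is integrable on $(0,\infty)$ precisely because $a>n/2$ (the same hypothesis needed for $\vphi_t*\mu<\infty$ on $n$-AD-regular $\mu$). Setting $\tau=\log t$, $\sigma=\log u$ turns the identity into a convolution in $\tau$ against an $L^1(\R)$ kernel; Minkowski in $L^2(d\tau)$ plus Fubini then transfers the Carleson bound of Theorem \ref{main1} to (b), where the only subtlety is the range $u>1$, handled by applying Theorem \ref{main1} on the enlarged ball $B(x_0,uR)$ and absorbing the factor $(uR)^n$ into the decay of the weight. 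An analogous layer-cake-plus-integration-by-parts computation, using the smoothness of $\vphi$ so that the radial boundary terms vanish, writes $\wt\Delta_{\mu,\vphi}(x,t)$ as a scale-convolution of $\Delta_\mu$ against a mean-zero $L^1$ weight of comparable decay, giving (a)$\Rightarrow$(c). Finally, (c)$\Rightarrow$(b) is immediate from $\Delta_{\mu,\vphi}(x,t)=-\int_t^{2t}\wt\Delta_{\mu,\vphi}(x,s)\,ds/s$ (the fundamental theorem of calculus applied to $s\mapsto\vphi_s*\mu(x)$) combined with Cauchy--Schwarz in $s$ and Fubini.

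The main obstacle is (b)$\Rightarrow$(a). Inverting the scale-convolution in the $\tau$-variable is impossible in $L^2$, since the Mellin symbol of $u^n\eta(u)$ has zeros on the imaginary axis. Instead, I would follow the tangent-measure strategy underlying the hard direction of Theorem \ref{main1}: from (b), show that at $\mu$-a.e.\ $x$ there exist scales $r_k\downarrow 0$ along which $\Delta_{\mu,\vphi}(x,r_k)\to 0$; pass to a tangent measure $\nu$ of $\mu$ at $x$ along $\{r_k\}$, which then satisfies $\vphi_1*\nu\equiv\vphi_2*\nu$ on $\supp\nu$; promote this smooth invariance to the ball-density invariance $\nu(B(\cdot,r))/r^n\equiv c$ in $r$; then Preiss's theorem \cite{Preiss} forces $\nu$ to be flat, and the quantitative corona machinery used for Theorem \ref{main1} upgrades ``flat tangents $\mu$-a.e." to full uniform rectifiability. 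This smooth-to-ball promotion --- turning invariance of $\vphi$-convolutions under a single dilation into invariance of the raw $n$-density of $\nu$ --- is where the positivity and precise decay of $\vphi$ must enter in a Tauberian way, and is the longest and most delicate step of the argument.
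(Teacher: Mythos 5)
Your organizing observation (that $\Delta_{\mu,\vphi}$ and $\wt\Delta_{\mu,\vphi}$ annihilate flat measures) is exactly the right starting point, and your (c)$\Rightarrow$(b) via $\Delta_{\mu,\vphi}(x,t)=-\int_t^{2t}\wt\Delta_{\mu,\vphi}(x,s)\,ds/s$, Cauchy--Schwarz, and Fubini is correct and cleaner than what is in the paper (which derives (b) and (c) from (a) independently). The other two legs, however, diverge significantly from the paper and one of them has a real gap.

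For (a)$\Rightarrow$(b),(c) you reduce to Theorem~\ref{main1} through the layer-cake identity $\Delta_{\mu,\vphi}(x,t)=\int_0^\infty u^n\eta(u)\,\Delta_\mu(x,ut)\,du$ with $\eta=-\psi'$. This is a valid formula, but the route is logically backwards relative to the paper's architecture: the forward direction of Theorem~\ref{main1} is the hardest part of the paper (the wavelet decomposition of $r^{-n}\chi_{B(0,r)}-(2r)^{-n}\chi_{B(0,2r)}$ in Section~\ref{sec:urbdd}), whereas the paper obtains (a)$\Rightarrow$(b),(c) directly and more cheaply from Theorem~\ref{teotol} on $\alpha$-numbers: any smooth radial $h$ with the decay \eqref{hprop} and zero integral against flat measures satisfies \eqref{eqsq11}, and both $h=\vphi_1-\vphi_2$ and $h(\cdot)=\partial_\vphi(\cdot,1)$ are of this type, which is Theorem~\ref{teoq1}. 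Incidentally the paper itself uses a layer-cake argument, but only in the other direction (Corollary~\ref{coro***}, to pass from \eqref{eqsq2} to \eqref{eqsqdif}), and there the transfer weight is deliberately taken to be Gaussian regardless of $\vphi$, so that the super-exponential tail handles the scales $ut>R$. In your version the transfer weight is forced to be $u^n\eta(u)$; for $\vphi(x)=(1+|x|^2)^{-a}$ this is only $O(u^{n-2a-1})$, and naively writing $B(x_0,R)\subset B(x_0,uR)$ and applying Theorem~\ref{main1} at radius $uR$ produces an extra $u^{n/2}$ after the square root, which is not absorbable unless $a>3n/4$. The fix is to split at $u=1$ and to use only the trivial pointwise bound $|\Delta_\mu|\lesssim 1$ for $u>1$; that gets you back to integrability precisely under $a>n/2$. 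You should spell this out, since ``absorbing $(uR)^n$ into the decay of the weight'' is false as stated for the Bessel-type kernel.

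For (b)$\Rightarrow$(a), the gap is more serious. Passing to tangent measures along a sequence $r_k\downarrow 0$ at $\mu$-a.e.\ $x$ and showing those tangents are flat via Preiss only delivers (qualitative) $n$-rectifiability of $\mu$, not uniform $n$-rectifiability; there exist AD-regular sets that are $n$-rectifiable but very far from uniformly rectifiable. What the Carleson condition (b) actually buys is a quantitative, scale-and-location-uniform statement, and the correct way to extract it is not through a.e.\ tangent measures. The paper's argument is: a compactness/contradiction lemma (Lemma~\ref{lemcompact}, renormalized in Lemma~\ref{lemcompactnorm1}) shows that on \emph{every} ball where the square function is small, $\mu$ is close in the $\dist_B$ metric to a measure $\nu$ with $\vphi_t*\nu$ constant; Lemma~\ref{delellis} (a Tauberian/density argument in the spirit of De Lellis) converts this to closeness to an $n$-uniform measure; Lemma~\ref{lemweak} then gives \eqref{wcdm}, i.e.\ the local approximate constant density; Lemma~\ref{lemcarleson} packages this as the weak constant density condition \eqref{wcdcon}; and finally Theorem~\ref{wcdur} (the WCD $\Leftrightarrow$ UR equivalence from \cite{toluni}, which is where Preiss and the bilateral weak geometric lemma enter) closes the loop. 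Your phrase ``the quantitative corona machinery upgrades flat tangents $\mu$-a.e.\ to full uniform rectifiability'' glosses over exactly this step: the upgrade is not an automatic corona argument but the nontrivial WCD theorem, and the input to it must be the Carleson-quantified WCD, which you never establish. So this leg, as written, does not prove (b)$\Rightarrow$(a).
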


The functions $\varphi_t$  above are radially symmetric and (constant multiples of) approximate
identities on any $n$-plane containing the origin. The definitions of $\Delta_{\mu,\varphi}(x,t)$ and $\wt \Delta_{\mu,\vphi}(x,t)$ arise from convolving the measure $\mu$ with the kernels $\varphi_t(x) - \varphi_{2t}(x)$ and $\partial_{\varphi}(x,t)$, respectively.
 Note that $\varphi_t(x) - \varphi_{2t}(x)$ is a discrete approximation to $\partial_{\varphi}(x,t)$. 
Note also that  the quantities $\Delta_{\mu}(x,t),\,\Delta_{\mu,\varphi}(x,t)$ and $\wt \Delta_{\mu,\vphi}(x,r)$ are identically zero whenever $\mu= \ha^n|_{L}$,  $L$ is an $n$-plane, and $x\in L$.  

For each integer $k>0$, let
\begin{equation*}
\wt\Delta_{\mu,\vphi}^k(x,t) = \int \partial_\vphi^k(y-x,t)\ d\mu(y), \mbox{ where }\partial_\vphi^k(x,t) = t^k\partial_t^k\vphi_t(x).
\end{equation*}
Similarly, let
\begin{equation*}
\Delta_{\mu,\vphi}^k(x,t) = \int D^k\left[\vphi_t\right](y-x)\ d\mu(y),\\
\end{equation*}
where
$$
D^k [\vphi_t](x) = D^{k-1}[D \vphi_t](x),\;\mbox{ and } \;D \vphi_t(x) =\vphi_t(x) - \vphi_{2t}(x).
$$

By arguments analogous to the ones of Theorem \ref{main2}, we obtain the following equivalent square function conditions for uniform rectifiability.
\begin{propo}
\label{prop1}Let $\vphi:\Rd \ra \R$ be of the form $e^{-|x|^{2N}}$, with $N\in\N$, or $(1+|x|^2)^{-a}$, with $a>n/2$. 
Let $\mu$ be an $n$-AD-regular measure in $\R^d$ and $k>0$. The following are equivalent:
\begin{enumerate}
\item[(a)] $\mu$ is uniformly $n$-rectifiable.
\item [(b)]There exists a constant $c_k$ such that for any ball $B(x_0,R)$ centered at $\supp\mu$,
\begin{equation}
\label{prop_eq2}
\int_0^R \int_{x\in B(x_0,R)} |\Delta_{\mu,\vphi}^k(x,r)|^2\,d\mu(x)\,\frac{dr}r \leq c_k\, R^n.
\end{equation}
\item[(c)] There exists a constant $c_k$ such that for any ball $B(x_0,R)$ centered at $\supp\mu$,
\begin{equation}
\label{prop_eq3}
\int_0^R \int_{x\in B(x_0,R)} |\wt \Delta_{\mu,\vphi}^k(x,r)|^2\,d\mu(x)\,\frac{dr}r \leq c_k\, R^n.
\end{equation}
\end{enumerate}
\end{propo}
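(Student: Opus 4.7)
The plan is to follow the strategy of Theorem \ref{main2} and reduce Proposition \ref{prop1} for arbitrary $k\geq 1$ to the case $k=1$ already handled there. For the implications (a)$\Rightarrow$(b) and (a)$\Rightarrow$(c), I note that both kernels $D^k[\vphi_t]$ and $\partial^k_\vphi(\cdot,t)$ are smooth, radially symmetric, share the rapid decay of $\vphi$ (with constants depending on $k$ through finitely many derivatives of $\vphi$), and annihilate the surface measure of any $n$-plane passing through the base point, since $\int_L\vphi_t\,d\HH^n$ is independent of $t$ whenever $L$ is an $n$-plane through the origin. These three features are exactly what is needed to reproduce the Carleson-type argument of Theorem \ref{main2}: bound $|\Delta^k_{\mu,\vphi}(x,t)|$ and $|\wt\Delta^k_{\mu,\vphi}(x,t)|$ pointwise by a local non-flatness coefficient at scale $t$ around $x$ (an $\alpha$- or $\beta_2$-number on a ball of radius $\asymp t$), and invoke the Carleson packing condition enjoyed by such coefficients on uniformly rectifiable $n$-AD-regular measures.

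For (c)$\Rightarrow$(a), set $F(x,s):=\int\vphi_s(y-x)\,d\mu(y)$. The AD-regularity of $\mu$ and the rapid decay of $\vphi$ and its derivatives give the uniform bound $|\wt\Delta^j_{\mu,\vphi}(x,s)|=|s^j\partial_s^j F(x,s)|\leq C_j$ for every $j\geq 0$ and $s>0$. Hence, for $k\geq 2$, $\partial_s^{k-1}F(x,s)\to 0$ as $s\to\infty$, and the fundamental theorem of calculus yields
\begin{equation*}
\wt\Delta^{k-1}_{\mu,\vphi}(x,t)\;=\;-\int_t^\infty \Bigl(\frac{t}{s}\Bigr)^{k-1}\wt\Delta^{k}_{\mu,\vphi}(x,s)\,\frac{ds}{s}.
\end{equation*}
Applying Cauchy--Schwarz, Fubini, and splitting the $s$-integral at $s=R$ (using the uniform bound on the tail $s>R$) gives
\begin{equation*}
\int_0^R\!\!\int_{B(x_0,R)}\!\!|\wt\Delta^{k-1}_{\mu,\vphi}|^2\,d\mu\,\frac{dt}{t}\;\leq\;C_k\Bigl(\int_0^R\!\!\int_{B(x_0,R)}\!\!|\wt\Delta^{k}_{\mu,\vphi}|^2\,d\mu\,\frac{ds}{s}+R^n\Bigr).
\end{equation*}
Iterating this estimate from $k$ down to $1$ produces the hypothesis of Theorem \ref{main2}(c), which yields (a).

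For (b)$\Rightarrow$(a), I would repeat the previous argument in the discrete setting, using the telescope $\Delta^{k-1}_{\mu,\vphi}(x,t)=\Delta^{k}_{\mu,\vphi}(x,t)+\Delta^{k-1}_{\mu,\vphi}(x,2t)$. I expect this to be the main obstacle: unlike the smooth case, iterating the telescope down to a fixed scale leaves a boundary remainder $\Delta^{k-1}(x,R)$ that is only uniformly bounded (and hence not integrable against $dt/t$ near $t=0$), and a naive Cauchy--Schwarz on the resulting finite dyadic sum of $\Delta^k$-terms introduces a $\log_2(R/t)$ loss. My preferred workaround is to first reduce $(b)_k$ to $(c)_k$ at the same $k$, via a Littlewood--Paley-type equivalence between the smooth and discrete square functions grounded in the identity
\[
\Delta^k_{\mu,\vphi}(x,t)\;=\;-\int_t^{2t}\frac{1}{s}\,\wt{D}^{k-1}\!\bigl[\wt\Delta_{\mu,\vphi}(x,\cdot)\bigr](s)\,ds,
\]
where $\wt{D}^{k-1}$ denotes the $(k-1)$-th discrete difference operator in the scale variable; the previous paragraph then concludes.
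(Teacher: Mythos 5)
Your argument for (a)$\Rightarrow$(b) and (a)$\Rightarrow$(c) matches the paper's: one observes that both $D^k[\vphi_t]$ and $\partial^k_\vphi(\cdot,t)$ can be written as $h_t$ for a fixed $h$ with the decay, smoothness, and plane-annihilation properties required by Theorem~\ref{teoq1}. So that part is the same route.

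Your treatment of (c)$\Rightarrow$(a) is genuinely different from the paper's, and it is correct. The paper handles it by re-running the compactness argument of Lemma~\ref{lemcompact}: it passes to a weak-$*$ limit, concludes $\partial_r^k f_x(r)=0$ a.e., and uses boundedness to force the polynomial $f_x$ to be constant (Lemma~\ref{cor_lemcompact}). You instead prove the pure Carleson-level inequality
$$
\int_0^R\!\!\int_{B(x_0,R)}|\wt\Delta^{k-1}_{\mu,\vphi}|^2\,d\mu\,\frac{dt}{t}
\;\lesssim_k\;\int_0^R\!\!\int_{B(x_0,R)}|\wt\Delta^{k}_{\mu,\vphi}|^2\,d\mu\,\frac{ds}{s}\;+\;R^n
$$
directly, via the kernel $\wt\Delta^{k-1}(x,t)=-\int_t^\infty (t/s)^{k-1}\wt\Delta^k(x,s)\,ds/s$, Cauchy--Schwarz against the finite measure $(t/s)^{k-1}ds/s$ (mass $\frac{1}{k-1}$ for $k\ge2$), Fubini, and the uniform bound $|\wt\Delta^k|\le C_k$ for $s>R$. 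Iterating reduces $(c)_k$ to $(c)_1$, which is Theorem~\ref{main2}(c). This is quantitatively cleaner than the paper's re-run of the blow-up argument and buys you explicit constants; I verified the steps, including the integral $\int_0^R(t/R)^{k-1}\,dt/t=\frac{1}{k-1}$ controlling the tail.

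However, your (b)$\Rightarrow$(a) is not complete, and you yourself flag the obstacle. The telescoping identity $\Delta^{k-1}(x,t)=\Delta^k(x,t)+\Delta^{k-1}(x,2t)$ indeed leaves a bounded boundary term and, after Cauchy--Schwarz on the $\approx\log_2(R/t)$ summands, produces a $\log^2(R/t)$ weight that is not integrable against $dt/t$ near $t=0$. Your proposed workaround via
$$
\Delta^k_{\mu,\vphi}(x,t)=-\int_t^{2t}\frac{1}{s}\,\wt D^{k-1}\bigl[\wt\Delta_{\mu,\vphi}(x,\cdot)\bigr](s)\,ds
$$
is a correct identity (the averaging operator $A[g](t)=-\int_t^{2t}g(s)\,ds/s$ and the dyadic difference $D$ commute, and $DF=A[\wt\Delta^1]$), but it expresses $\Delta^k$ in terms of the $(k-1)$-fold \emph{discrete} difference of $\wt\Delta^1$, which is neither $\wt\Delta^k$ nor immediately comparable to it: the discrete dyadic difference and the scale-invariant derivative $t^k\partial_t^k$ are different operators, and a bound on the square function of one does not directly transfer to the other. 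So the claimed ``Littlewood--Paley equivalence'' $(b)_k\Leftrightarrow(c)_k$ is exactly the nontrivial step, and it is not supplied. The paper sidesteps this by adapting the compactness argument to the discrete case (analogues of Lemma~\ref{lemcompactnorm} and Theorem~\ref{teounif}): in the limit one has $D^k[F(x,\cdot)]\equiv0$ for a.e.\ $x$; viewed as a linear recurrence with characteristic polynomial $(1-X)^k$ and using boundedness of $F$, this forces dyadic periodicity $F(x,2t)=F(x,t)$, from which Lemma~\ref{lemmaunif}, the existence of densities, and Lemma~\ref{delellis} yield $n$-uniformity as in Theorem~\ref{teounif}. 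If you want to complete (b)$\Rightarrow$(a) without that blow-up, you would need to actually establish the comparison between the two square functions, and that is not automatic.
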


Proposition \ref{prop1} is  in the same spirit as the characterization of Lipschitz function spaces in Chapter V, Section 4 of \cite{Stein}. 

There are other characterizations of uniform $n$-rectifiability via square functions in the literature. Among the most relevant of these is a condition in terms of the $\beta$-numbers of Peter Jones.
For $x \in \supp(\mu)$ and $r>0,$ consider the coefficient
$$\beta^\mu_1(x,r)= \inf_L \int_{B(x,r)} \frac{\dist(y,L)}{r^{n+1}}d \mu (y),$$
where the infimum is taken over all $n$-planes $L$. 
Like $\Delta_\mu(x,r)$, $\beta^\mu_1(x,r)$  is a dimensional coefficient, but while $\beta^\mu_1(x,r)$ measures how close 
$\supp(\mu)$ is to some $n$-plane, $\Delta_\mu(x,r)$ measures
the oscillations of $\mu$. 
In \cite{DS1},
David and Semmes  proved that   $\mu$ is uniformly $n$-rectifiable if and only if $\beta^\mu_1(x,r)^2 d x \frac{dr}{r} $ is a Carleson measure on $\supp(\mu) \times (0, \infty)$, that is, \eqref{eqsq2} is satisfied with $\Delta_\mu(x,r)$ replaced by $\beta_1^\mu (x,r)$.  

The paper is organized as follows. In Section \ref{sec:prel} we provide the preliminaries for the proofs of Theorems \ref{main1} and \ref{main2}. In Section \ref{sec:bddur} we show first that the boundedness of the smooth square functions in \eqref{eqsqdif} and \eqref{eqsqdifsm} implies uniform rectifiability. Using suitable convex combinations,  we then show that \eqref{eqsq2} implies \eqref{eqsqdif}, and thus 
establish one of the implications in Theorem \ref{main1}. In Section \ref{sec:urbddsm} we prove that uniform $n$-rectifiability implies \eqref{eqsqdif} and \eqref{eqsqdifsm}, and thereby 
complete the proof of Theorem \ref{main2}. In Section \ref{sec:urbdd} we prove that \eqref{eqsq2} holds if $\mu$ is uniform $n$-rectifiable; this is the most delicate part of the paper because of  complications which arise from the non-smoothness of the function $r^{-n}\chi_{B(0,r)} - (2r)^{-n}\chi_{B(0,2r)}$. Finally, in Section \ref{sec_prop} we outline the proof for Proposition \ref{prop1}.

Throughout the paper the letter $C$ stands
for some constant which may change its value at different
occurrences. The notation $A\lesssim B$ means that
there is some fixed constant $C$ such that $A\leq CB$,
with $C$ as above. Also, $A\approx B$ is equivalent to $A\lesssim B\lesssim A$.  

\section{Preliminaries}\label{sec:prel}

\subsection{The David cubes}

Below we will need to use the David lattice $\DD$ of ``cubes'' associated with $\mu$ (see \cite[Appendix 1]{Da}, for example). 
Suppose for simplicity that $\mu(\R^d)=\infty$.
In this case,
$\DD=\bigcup_{j\in\Z}\DD_j$ and each set $Q\in\DD_j$, which is called a cube, satisfies 
$\mu(Q)\approx 2^{-jn}$ and $\diam(Q)\approx2^{-j}$. In fact, we will assume that
$$c^{-1}2^{-j}\leq \diam(Q)\leq 2^{-j}.$$
We set $\ell(Q):=2^{-j}$. 
For $R \in \DD$, we denote by $\DD(R)$ the family of all cubes $Q \in \DD$ which are contained in $R$. In the case when $\mu(\R^d)<\infty$ and $\diam(\supp(\mu))\approx2^{-j_0}$, then $\DD=\bigcup_{j\geq j_0}
\DD_j$. The other properties of the lattice $\DD$ are the same as in the previous case.

\subsection{The $\alpha$ coefficients}

The so called $\alpha$ coefficients from \cite{Tolsa-lms} play a crucial role in our proofs. They are defined as follows.
Given a closed ball $B\subset\R^d$ which intersects $\supp(\mu)$,
 and two finite Borel measures $\sigma$ and $\nu$
in $\R^d$ , we set
$$\dist_B(\sigma,\nu):= \sup\Bigl\{ \Bigl|{\textstyle \int f\,d\sigma  -
\int f\,d\nu}\Bigr|:\,{\rm Lip}(f) \leq1,\,\supp f\subset
B\Bigr\},$$
where ${\rm Lip}(f)$ stands for the Lipschitz constant of $f$.
It is easy to check that this is indeed a distance in the space of finite Borel measures supported in the interior of 
$B$. See [Chapter 14, Ma] for other properties of this distance.
Given a subset $\AZ$ of Borel measures, we set
$$\dist_B(\mu,\AZ) := \inf_{\sigma\in\AZ}\dist_B(\mu,\,\sigma).$$
We define
$$
\alpha_\mu^n(B) := \frac1{r(B)^{n+1}}\,\inf_{c\geq0,L} \,\dist_{B}(\mu,\,c\HH^n_{|L}),$$
where $r(B)$ stands for the radius of $B$ and
the infimum is taken over all the constants $c\geq0$ and all the $n$-planes $L$. To simplify notation, 
we will write $\alpha(B)$ instead of $\alpha_\mu^n(B)$.

Given a cube $Q\in\DD$, let $B_Q$ be a ball with radius $10\ell(Q)$ with 
the same center as $Q$. We denote
$$\alpha(Q):=\alpha(B_Q).$$
We also denote by $c_Q$ and $L_Q$ a constant and an $n$-plane minimizing $\alpha(Q)$. We assume that $L_Q \cap \frac12 B_Q \neq \emptyset$.

The following is shown in \cite{Tolsa-lms}.

\begin{theorem}\label{teotol}
Let $\mu$ be an $n$-AD-regular measure in $\R^d$.
If $\mu$ is uniformly $n$-rectifiable, then there exists a constant $c$ such that
\begin{equation}\label{eqsq111}
\sum_{Q\subset R}\alpha(Q)^2\,\mu(Q)\leq c\,\mu(R)\qquad\mbox{for all $R\in\DD$.}
\end{equation}
\end{theorem}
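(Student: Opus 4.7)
The plan is to deduce \eqref{eqsq111} from the Lipschitz–graph corona decomposition of $\mu$, one of the equivalent formulations of uniform $n$-rectifiability due to David and Semmes. This provides, for any preassigned $\eta>0$, a partition of $\DD(R)$ into ``trees'' (stopping regions) $\TT$, each equipped with a top cube $Q_\TT$ and an $\eta$-Lipschitz graph $\Gamma_\TT$, such that $\supp(\mu)\cap B_Q$ lies within Hausdorff distance $\eta\,\ell(Q)$ of $\Gamma_\TT$ for every $Q\in\TT$, while the tops satisfy the Carleson packing $\sum_\TT\mu(Q_\TT)\lesssim\mu(R)$. It therefore suffices to prove, for each fixed tree $\TT$,
\begin{equation*}
\sum_{Q\in\TT}\alpha(Q)^2\,\mu(Q)\lesssim\mu(Q_\TT).
\end{equation*}

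For $Q\in\TT$ I would take $L_Q$ to be an $n$-plane tangent to $\Gamma_\TT$ near the center of $B_Q$ and choose $c_Q$ by an appropriate average of the density of $\mu$ against $\HH^n|_{\Gamma_\TT}$. To estimate $\dist_{B_Q}(\mu,\,c_Q\HH^n|_{L_Q})$ I would transport $\mu|_{B_Q}$ to $c_Q\HH^n|_{L_Q}$ in two geometric steps: first project $\mu|_{B_Q}$ onto $\Gamma_\TT$ by the nearest-point map, whose Lipschitz-duality cost is $O(\eta\,\ell(Q)^{n+1})$ because $\supp(\mu)\cap B_Q$ is $\eta\,\ell(Q)$-close to $\Gamma_\TT$; then project the resulting measure onto $L_Q$ along $L_Q^\perp$, at a similar cost because $\Gamma_\TT$ is $\eta$-flat over $L_Q$. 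What remains on $L_Q$ is a measure of the form $h_Q\,\HH^n|_{L_Q}$, and the residual distance to $c_Q\,\HH^n|_{L_Q}$ is controlled by $\ell(Q)$ times the $L^1$-oscillation of $h_Q$ around the chosen constant $c_Q$.

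After normalizing by $\ell(Q)^{n+1}$, this procedure gives the pointwise bound $\alpha(Q)\lesssim\eta+\mathrm{Osc}_Q(h_\TT)$, where $\mathrm{Osc}_Q(h_\TT)$ is a dyadic martingale-type difference of the density that $\mu$ induces on $\Gamma_\TT$ (identified with $L_\TT$ via the graph map). Squaring and summing over $Q\in\TT$, the $\eta^2$ contribution is at most $\eta^2\mu(Q_\TT)$, and the oscillation contributions telescope as a Littlewood–Paley square function whose $L^2$ norm is bounded by $\|h_\TT\|_{L^\infty}^2\,\mu(Q_\TT)\lesssim\mu(Q_\TT)$ using AD-regularity. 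Summing over trees and invoking the Carleson packing of tops then yields \eqref{eqsq111}.

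The main obstacle is the density-oscillation step. Although $\mu$ is AD-regular and $\Gamma_\TT$ is nearly flat, the density $h_\TT$ that $\mu$ induces on $L_\TT$ is only a priori bounded above and below; proving that its dyadic oscillations are \emph{square-summable at Carleson rate}, rather than merely $L^\infty$-bounded, requires a quantitative argument that combines the Lipschitz-graph approximation with the doubling of $\mu$—essentially a reverse-H\"older or BMO-type statement for the density on $\Gamma_\TT$. Making this precise at the level of the square function estimate is the real technical heart of the argument; by contrast, the geometric transport bounds involving $\eta$ are routine.
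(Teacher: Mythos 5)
The paper itself does not prove this theorem: it simply cites it from \cite{Tolsa-lms}, so there is no ``paper's own proof'' to compare with. Your proposal follows, in outline, essentially the strategy used in that reference: a David--Semmes corona decomposition into Lipschitz--graph stopping regions, and on each tree $\TT$ a splitting of $\alpha(Q)$ into a small geometric term (coming from the $\eta$-flatness of $\Gamma_\TT$) plus a term measuring the fluctuation of the density of $\mu$ relative to $\Gamma_\TT$. That route is the right one.

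There is, however, a genuine gap in the density step, and your diagnosis of where the difficulty lies is also off. You bound the residual by ``$\ell(Q)$ times the $L^1$-oscillation of $h_Q$ around $c_Q$'', but the normalized $L^1$-oscillation of a merely bounded function does \emph{not} satisfy a Carleson packing: for a generic $h\in L^\infty$ it is bounded below uniformly in $Q$. What does pack is the (single-scale) martingale difference $\Delta_{Q}h = \mathbb{E}[h\mid\DD_{j(Q)}]-\mathbb{E}[h\mid\DD_{j(Q)-1}]$, via plain $L^2$-orthogonality and $h\in L^\infty\subset L^2_{\rm loc}$ --- nothing like a reverse H\"older or BMO-regularity statement is required, and indeed no such regularity of $h$ is available. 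The missing idea is therefore the step that converts $\alpha(Q)$ into a single-scale quantity: one must expand $h-c_Q$ in the martingale (or wavelet) basis and exploit the \emph{cancellation of the $1$-Lipschitz test function} against the mean-zero bumps, which gives a geometric factor $\min\bigl(\ell(Q')/\ell(Q),\,\ell(Q)/\ell(Q')\bigr)^{\gamma}$ between scales. Summing with this decay (Schur test / Carleson embedding) is what makes the square function finite; without it the chain $\alpha(Q)\lesssim\eta+\mathrm{Osc}_Q(h_\TT)$ is not justified. A second, smaller issue you should address explicitly: the corona decomposition only places $\supp\mu\cap B_Q$ within $\eta\,\ell(Q)$ of $\Gamma_\TT$, so the ``density $h_\TT$ that $\mu$ induces on $\Gamma_\TT$'' is not an honest object; one must first transport $\mu|_{3Q_\TT}$ onto $\Gamma_\TT$ (say by nearest-point projection), control the transport cost by $\eta\,\ell(Q)^{n+1}$ at every $Q\in\TT$, and only then run the martingale argument on the pushed-forward density on $\Gamma_\TT$, which is bounded above and below by AD-regularity.
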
 

\subsection{The weak constant density condition}

Given $\mu$ satisfying \eqref{ad}, we  denote by $G(C,\ve)$ the subset of those $(x,r) \in \supp (\mu) \times (0, \infty)$ for which there exists a Borel measure $\sigma=\sigma_{x,r}$ satisfying 
\begin{enumerate}
\item $\supp (\sigma)=\supp(\mu)$,
\medskip
\item the $AD$-regularity condition \eqref{ad} with constant $C$,
\medskip
\item $|\sigma(B(y,t))-t^n|\leq \ve r^n$ for all $y\in \supp(\mu) \cap B(x,r)$ and all $0<t<r.$
\end{enumerate}

\begin{definition}
\label{wcd}
A Borel measure $\mu$ satisfies the \textit{weak constant density condition} (WCD) if there exists a positive constant $C$ such that the set 
$$G(C, \ve)^c:=[\supp(\mu) \times (0, \infty)]\setminus G(C,\ve)$$ is a Carleson set for every $\ve>0$, that is, for every $\ve>0$ there exists a constant $C(\ve)$ such that
\begin{equation}
\label{wcdcon}
\int_0^R \int_{B(x,R)} \chi_{G(C, \ve)^c}(x,r)\, d\mu(x) \frac{dr}r \leq C(\ve)R^n
\end{equation}
for all $x\in \supp(\mu)$ and $R>0$.
\end{definition}

\begin{theorem}
\label{wcdur}
Let $n\in (0,d)$ be an integer. An $n$-AD-regular measure $\mu$ in $\Rd$ is uniformly $n$-rectifiable if and only if it satisfies the weak constant density condition.
\end{theorem}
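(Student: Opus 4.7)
\smallskip

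\textbf{Plan.} The plan is to follow the general scheme of David and Semmes from \cite{DS1}, proving the two implications separately.

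For the easier direction, uniform rectifiability $\Rightarrow$ WCD, I would exploit the Corona decomposition available for uniformly $n$-rectifiable measures. This produces a partition of the David lattice $\DD$ into trees, each with a root (``top'') cube, such that on each tree the support of $\mu$ is $\ve$-close (in a Hausdorff sense) to an $n$-dimensional Lipschitz graph $\Gamma$ over an $n$-plane $L$, and the family of top cubes satisfies a Carleson packing condition. For each pair $(x,r)$ lying deep inside some tree (i.e., at a scale well above the stopping time), I would construct $\sigma_{x,r}$ by pushing $\HH^n|_L$ forward to $\Gamma$ via the graph parametrization, and then transporting to $\supp(\mu)$ using the bi-Lipschitz correspondence between $\Gamma$ and $\supp(\mu)$ provided by the Corona decomposition. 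Because the graph is $\ve$-Lipschitz and the correspondence distorts distances by at most $1+O(\ve)$, the resulting $\sigma_{x,r}$ satisfies $|\sigma_{x,r}(B(y,t))-t^n|\lesssim \ve\, r^n$ at all sub-scales $t<r$. The pairs $(x,r)$ not lying deep in any tree fit into a Carleson set by the Carleson packing of tops, yielding \eqref{wcdcon}.

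For the converse direction, WCD $\Rightarrow$ uniform rectifiability, I would combine the WCD with a quantitative version of Preiss's theorem \cite{Preiss}. The crucial lemma is as follows: for every $\eta>0$ there is $\ve=\ve(\eta,n,d)>0$ and $C$ such that if $(x,r)\in G(C,\ve)$, then $\supp(\mu)\cap B(x,r/2)$ lies within Hausdorff distance $\eta r$ of some $n$-plane through $x$. Granting this lemma, the Carleson property of $G(C,\ve)^c$ transfers to a Carleson-type control on the bilateral $\beta_\infty$-coefficients (the BAUP condition of David-Semmes), which by \cite{DS1} is equivalent to uniform $n$-rectifiability.

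\textbf{Main obstacle.} The hard step is the quantitative Preiss lemma in the WCD $\Rightarrow$ UR direction. The natural approach is a compactness/tangent-measure argument: if the conclusion fails, one produces a sequence of pairs $(x_j,r_j)\in G(C,\ve_j)$ with $\ve_j\to0$ for which the rescaled supports stay $\eta$-far from every $n$-plane; passing to a tangent measure $\nu$ one obtains a Radon measure on $\R^d$ of exact $n$-dimensional density $\nu(B(y,t))=t^n$ for all $y\in\supp\nu$ and all $t>0$, which by Preiss's theorem must be $\HH^n$ on an $n$-plane, contradicting the non-flatness. The delicacy of Preiss's classification of $n$-uniform measures for $n\ge 2$ is what makes this the technical heart of the argument; the rest of the implication is a reasonably standard translation from ``flat at most scales'' to BAUP, and then to UR via the David-Semmes characterization.
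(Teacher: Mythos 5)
Your sketch of the direction ``uniform rectifiability $\Rightarrow$ WCD'' is sound and matches the argument attributed to David and Semmes in \cite[Chapter 6]{DS1}: on each tree of a Corona decomposition one transports $\HH^n$ on the approximating $n$-plane to $\supp(\mu)$ through the Lipschitz graph, and the scales lying too close to the stopping cubes are absorbed by the Carleson packing of the tops. Note, however, that the paper does not reprove this theorem; it simply cites \cite{DS1}, \cite{DS2} and \cite{toluni}, and the substance of the hard direction for general $n$ is the content of \cite{toluni}.

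The converse direction in your proposal contains a genuine error. Your ``crucial lemma'' asserts that if $(x,r)\in G(C,\ve)$ with $\ve$ small, then $\supp(\mu)\cap B(x,r/2)$ must lie within $\eta r$ of an $n$-plane. This is false for $n\geq 3$. The compactness/tangent-measure argument you outline produces, in the limit, an $n$-uniform measure $\nu$ with $\nu(B(y,t))=t^n$ for all $y\in\supp\nu$ and $t>0$ --- but Preiss's classification does \emph{not} say that every such measure is flat. The Kowalski--Preiss example $\nu=\HH^3|_C$, with $C=\{x\in\R^4:x_4^2=x_1^2+x_2^2+x_3^2\}$, is a $3$-uniform measure whose support is nowhere near a $3$-plane at the unit scale around the vertex. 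Taking $\mu=\nu$, every $(x,r)$ belongs to $G(C,0)$, yet the conclusion of your lemma fails at, say, $(0,1)$. So the step ``WCD at scale $r$ $\Rightarrow$ flat at scale $r$'' cannot hold; the WCD only gives proximity to \emph{some} $n$-uniform measure, not to a flat one.

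What is actually needed (and what \cite{toluni} supplies) is a structural analysis of $n$-uniform measures themselves: Preiss proved that the tangent of any $n$-uniform measure at $\infty$ is flat, and with further work (a dichotomy between flat and uniformly non-flat uniform measures, plus a Carleson bound on the non-flat scales) one shows that every $n$-uniform measure is uniformly $n$-rectifiable and in particular satisfies the bilateral weak geometric lemma. Only after this does one transfer, via the WCD and a compactness argument, the approximate flatness at most scales to $\mu$ and invoke the David--Semmes BWGL characterization of uniform rectifiability. Your sketch skips this entire layer, which is precisely the technical heart and the reason the case $n\ne 1,2,d-1$ remained open for over two decades.
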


David and Semmes in \cite[Chapter 6]{DS1} showed that if $\mu$ is uniformly $n$-rectifiable, then it satisfies the WCD. In \cite[Chapter III.5]{DS2}, they also proved the converse in the cases when $n=1,2,d-1$. The proof of the converse for all codimensions was obtained very recently in \cite{toluni}. The arguments rely on two essential and deep ingredients: the so called bilateral weak geometric lemma of David and Semmes \cite{DS2}, and the (partial) 
  characterization of 
uniform measures by Preiss \cite{Preiss}.


\section{Boundedness of square functions implies uniform rectifiability} \label{sec:bddur}


In this section we assume that either  $\vphi(x)=e^{-|x|^{2N}}$, with $N\in\N$, or $\vphi(x)=(1+|x|^2)^{-a}$, with $a>n/2$, as in Theorem \ref{main2}. We will show that
 if \eqref{eqsqdif} or \eqref{eqsqdifsm} holds, then $\mu$ is uniformly $n$-rectifiable.  

We denote by $\wt\UU(\vphi,c_0)$  the class of $n$-AD-regular measures with constant $c_0$ such that
$f(r,x) = \vphi_r*\mu(x)$ is constant on $(0,\infty)\times \supp(\mu)$.


\begin{lemma}\label{lemcompact}
Let $\mu$ be an $n$-AD-regular measure such that $0\in\supp(\mu)$. For all
$\ve>0$ there exists  $\delta>0$ such that
if 
$$\int_{\delta}^{\delta^{-1}}\!\!
 \int_{x\in \bar B(0,\delta^{-1})} |\wt \Delta_{\mu,\vphi} (x,r)|\,d\mu(x)\,dr \leq \delta,$$
then
$$\dist_{B(0,1)}(\mu,\wt\UU(\vphi,c_0)) <\ve.$$
\end{lemma}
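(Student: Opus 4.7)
The plan is to argue by contradiction and compactness. Suppose the lemma fails: then there exist $\ve>0$, a sequence $\delta_k\to 0$, and $n$-AD-regular measures $\mu_k$ with constant $c_0$, each with $0\in\supp(\mu_k)$, such that
\begin{equation*}
\int_{\delta_k}^{\delta_k^{-1}}\int_{\bar B(0,\delta_k^{-1})}|\wt\Delta_{\mu_k,\vphi}(x,r)|\,d\mu_k(x)\,dr \leq \delta_k,
\end{equation*}
while $\dist_{B(0,1)}(\mu_k,\wt\UU(\vphi,c_0))\geq\ve$. By the uniform $n$-AD-regularity, Prokhorov's theorem lets me pass to a subsequence and obtain a weak$^*$ limit $\mu_\infty$. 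Standard arguments then give that $\mu_\infty$ is also $n$-AD-regular with constant $c_0$, that $0\in\supp(\mu_\infty)$, and that $\supp(\mu_k)\to\supp(\mu_\infty)$ in Hausdorff distance on compact sets.

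The core step is to show $\mu_\infty\in\wt\UU(\vphi,c_0)$. Since $\vphi$ is smooth with rapid (or at least polynomial $|y|^{-2a}$) decay together with its derivatives, the kernel $\partial_\vphi(\cdot,r)$ and its $(x,r)$-derivatives pair well against the $n$-AD-regular measures $\mu_k$. This yields, first, pointwise convergence $\wt\Delta_{\mu_k,\vphi}(x,r)\to\wt\Delta_{\mu_\infty,\vphi}(x,r)$ from weak$^*$ convergence, and, second, a uniform-in-$k$ Lipschitz bound on $\wt\Delta_{\mu_k,\vphi}$ over any compact $K\subset\R^d\times(0,\infty)$. By Arzela--Ascoli the convergence is uniform on such $K$. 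For $k$ large, $K\subset\bar B(0,\delta_k^{-1})\times[\delta_k,\delta_k^{-1}]$, and combining uniform convergence of the integrands with weak$^*$ convergence of $\mu_k$ tested against the continuous function $|\wt\Delta_{\mu_\infty,\vphi}|$ gives
\begin{equation*}
\int_K |\wt\Delta_{\mu_\infty,\vphi}|\,d\mu_\infty\,dr = \lim_{k\to\infty}\int_K |\wt\Delta_{\mu_k,\vphi}|\,d\mu_k\,dr \leq \lim_{k\to\infty}\delta_k = 0.
\end{equation*}
Since $K$ is arbitrary and $\wt\Delta_{\mu_\infty,\vphi}$ is continuous in $(x,r)$, it vanishes identically on $\supp(\mu_\infty)\times(0,\infty)$.

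Now $\wt\Delta_{\mu_\infty,\vphi}(x,r) = r\,\partial_r(\vphi_r*\mu_\infty)(x)$, so $r\mapsto \vphi_r*\mu_\infty(x)$ is constant for each $x\in\supp(\mu_\infty)$. To show the value is also $x$-independent, fix $x_1,x_2\in\supp(\mu_\infty)$ and write
\begin{equation*}
\vphi_r*\mu_\infty(x_1) - \vphi_r*\mu_\infty(x_2) = r^{-n}\int\bigl[\vphi((y-x_1)/r)-\vphi((y-x_2)/r)\bigr]\,d\mu_\infty(y).
\end{equation*}
By the mean-value theorem together with the decay of $\nabla\vphi$ and the $n$-AD-regularity of $\mu_\infty$ (split into dyadic annuli $\{y:|y-x_i|\approx 2^j r\}$), the right-hand side is bounded by $C|x_1-x_2|/r$. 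Letting $r\to\infty$ and using that the left side is $r$-independent, it must vanish. Hence $\vphi_r*\mu_\infty$ is constant on $\supp(\mu_\infty)\times(0,\infty)$, so $\mu_\infty\in\wt\UU(\vphi,c_0)$. Finally, weak$^*$ convergence of $\mu_k$ to $\mu_\infty$ combined with uniform AD-regularity implies $\dist_{B(0,1)}(\mu_k,\mu_\infty)\to 0$, via Arzela--Ascoli applied to the unit ball of $1$-Lipschitz functions supported in $B(0,1)$; this contradicts $\dist_{B(0,1)}(\mu_k,\wt\UU(\vphi,c_0))\geq\ve$.

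The hard part, I expect, is the passage from the integral smallness $\int\!\int|\wt\Delta_{\mu_k,\vphi}|\,d\mu_k\,dr\leq\delta_k$ to pointwise vanishing of $\wt\Delta_{\mu_\infty,\vphi}$ on $\supp(\mu_\infty)\times(0,\infty)$: this requires enough uniform regularity of the integrand in $(x,r)$, which has to be extracted from the specific smoothness and decay of the allowed $\vphi$'s together with $n$-AD-regularity, with particular care in the polynomial case $(1+|x|^2)^{-a}$ where the decay is only marginally better than critical for the tail estimates.
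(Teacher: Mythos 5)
Your proposal is correct and follows essentially the same contradiction-plus-weak$^*$-compactness strategy as the paper: extract a weak$^*$ limit $\mu_\infty$, show $\wt\Delta_{\mu_\infty,\vphi}$ vanishes on $\supp(\mu_\infty)\times(0,\infty)$ by combining a tail estimate (from decay of $\partial_\vphi$ and AD-regularity) with uniform convergence on compacta, deduce $r$-constancy and then $x$-constancy of $\vphi_r*\mu_\infty$ via the $\|\nabla(\vphi_R*\mu_\infty)\|_\infty\lesssim 1/R$ bound as $R\to\infty$, and contradict via continuity of $\dist_{B(0,1)}$ under weak$^*$. The only cosmetic difference is that the paper obtains uniform convergence by truncating the kernel with a fixed cutoff $\widetilde\chi$ and using equicontinuity of the test-function family $\{y\mapsto\widetilde\chi(x-y)\partial_\vphi(x-y,t)\}_{(t,x)\in K}$, whereas you obtain it by a uniform Lipschitz bound on $\wt\Delta_{\mu_k,\vphi}$ together with pointwise convergence and Arzel\`a--Ascoli; these are interchangeable.
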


\begin{proof}
Suppose that there exists an $\ve>0$, and for each $m\geq 1$ there exists an $n$-AD-regular
 measure $\mu_m$ such that $0\in\supp(\mu_m)$,
\begin{equation}\label{eqass32}
\int_{1/m}^{m}
 \int_{x\in \bar B(0,m)} |\wt \Delta_{\mu_m,\vphi} (x,r)|\,d\mu_m(x)\,dr \leq \frac1m,
\end{equation}
and
\begin{equation}\label{equu12}
\dist_{B(0,1)}(\mu_m,\wt\UU(\vphi,c_0)) \geq\ve.
\end{equation}

By \eqref{ad}, we can replace $\{\mu_m\}$ by a subsequence converging  weak * (i.e. when tested against 
compactly supported continuous functions) to a measure $\mu$ and it is easy to check that $0 \in \supp(\mu)$ and that $\mu$ is also $n$-dimensional AD-regular with constant $c_0$. We claim that 
\begin{equation}\label{eqlimit}
\int_{0}^\infty\!
 \int_{x\in\R^d} |\wt \Delta_{\mu,\vphi} (x,r)|\,d\mu(x)\,dr=0.
 \end{equation}\

The proof of (\ref{eqlimit}) is elementary.  Fix  $m_0$ and 
 let $\eta > 0$. Because of \eqref{ad} and the decay conditions assumed for $\vphi$ there exists  $A > 2m_0$ so that
\begin{equation}\label{tight}
\sup_{1/m_0 \leq t \leq m_0} \int_{\bar B(0,2m_0)} \int_{|x - y| > A} |\partial_{\vphi}(x-y,t)| d\nu(y) d\nu(x)  < \frac{\eta}{m_0}
\end{equation}
whenever $\nu$ satisfies \eqref{ad}  with constant $c_0.$ Set  $K = [1/m_0,\,m_0] \times \bar B(0,2m_0)$ and let $\widetilde{\chi}$ be a continuous function with compact support such that $\chi_{B(0,A)}  \leq \widetilde{\chi} \leq 1.$   
Then, writing $\psi_t(x) = \partial_{\vphi}(x,t)$ we have by \eqref{tight}
$$
\iint_K |((1 -{\widetilde{\chi}})\psi_t) *\mu(x)| d\mu(x) dt < \eta,
$$
\noindent and by \eqref{eqass32}
$$
\iint_K |({\widetilde{\chi}}\psi_t) *\mu_m(x)| d\mu_m(x) dt < \eta + {{1} \over {m}}.
$$
Now $\{y \to \widetilde{\chi}(x -y) \psi_t(x-y),~ (t,x) \in K\}$ is an equicontinuous family of continuous functions supported inside a fixed compact set,  which implies that $(\widetilde{\chi}\psi_t) * \mu_m(x)$ converges to $(\widetilde{\chi} \psi_t) * \mu(x)$ uniformly on $K$.  It therefore follows that 
\begin{equation}\label{eqesp84}
\iint_K  |\psi_t *\mu(x)|d\mu(x)dt \leq
 \eta + \limsup_m \int_{1/m_0}^{m_0}
 \int_{x\in \bar B(0,m_0)} |({\widetilde{\chi}}\psi_t)*\mu_m(x)|d\mu_m(x)dt \leq 2\eta.
\end{equation}
Since $\eta$ is arbitrary the left side of \eqref{eqesp84} vanishes, and since  this holds for any $m_0\geq 1$, our claim \eqref{eqlimit}  proved.

Our next objective consists in showing that $\mu\in \wt\UU(\vphi,c_0)$. To this end, denote by $G$ the subset of those points $x\in\supp(\mu)$ such that
$$\int_{0}^\infty\!
 |\wt \Delta_{\mu,\vphi} (x,r)|\,dr=0.$$
It is clear now that $G$ has full $\mu$-measure. For $x\in G$, given $0<R_1< R_2$, we have
\begin{equation}\label{eqig2}
\begin{split}
|\vphi_{R_1} *\mu(x) - \vphi_{R_2}*\mu(x)| &= \left|\int_{R_1}^{R_2} r\,\partial_r \left(\vphi_r * \mu\right)(x)\,\frac{dr}r
\right|
\\
&
= \left|\int_{R_1}^{R_2} \wt \Delta_{\mu,\vphi} (x,r)\,\frac{dr}r\right|\\
&\leq 
\frac1{R_1} \int_{R_1}^{R_2} |\wt \Delta_{\mu,\vphi} (x,r)|\,dr=0.
\end{split}
\end{equation}
Therefore given $x,y\in G$ and $R_2 > R>0$ we have 
\begin{equation}\label{eqw2200}
|\vphi_R *\mu(x) - \vphi_{R}*\mu(y)|  =
|\vphi_{R_2} *\mu(x) - \vphi_{R_2}*\mu(y)| 
\leq \|\nabla(\vphi_{R_2} *\mu)\|_\infty \,|x-y|.
\end{equation}
Notice that
\begin{equation*}
\nabla(\vphi_{R_2}*\mu)(x) = \int \nabla\vphi_{R_2}(x-y)\,d\mu(y),
\end{equation*}
and by decomposing this integral into annuli centered at $x$, using the fast decay of $\nabla\vphi_{R_2}$ at $\infty$ and the fact that
$\mu(B(x,r))\leq c_0\,r^n$ for all $r>0$, we easily see  that
\begin{equation}\label{eqig1}
\|\nabla(\vphi_{R_2}*\mu)\|_\infty\leq \frac{c}{R_2},
\end{equation}
with $c$ depending on $c_0$.
Thus as $R_2\to\infty$ the right side of \rf{eqw2200} tends to $0$  and we conclude that
 $\vphi_R *\mu(x) =
 \vphi_{R}*\mu(y)$. 
 
 By continuity, since $G$ has full $\mu$ measure, it follows that
 $f(r,x) = \vphi_r*\mu(x)$ is constant on $(0,\infty)\times \supp(\mu)$. In other words, $\mu\in
 \wt\UU(\vphi,c_0)$.
 However, by condition \rf{equu12}, letting $m\to\infty$, we have 
$$\dist_{B(0,1)}(\mu,\wt\UU(\vphi,c_0)) \geq\ve,$$
because $\dist_{B(0,1)}(\cdot,\wt\UU(\vphi,c_0)) $ is continuous under the weak * topology, see \cite[Lemma 14.13]{Mattila}.
So $\mu\not\in\wt\UU(\vphi,c_0)$, which is a contradiction.
\end{proof}

By renormalizing the preceding lemma we get:
\begin{lemma}\label{lemcompactnorm1}
Let $\mu$ be an $n$-AD-regular measure such that $x_0\in\supp(\mu)$. For all
$\ve>0$ and $r>0$ there exists a constant $\delta>0$  such that
if 
$$\int_{\delta\,r}^{\delta^{-1}\,r}\!\!
 \int_{x\in \bar B(x_0,\delta^{-1}r)} |\wt \Delta_{\mu,\vphi} (x,t)|\,d\mu(x)\,dt \leq \delta\,r^{n+1},$$
then
$$\dist_{B(x_0,r)}(\mu,\wt\UU(\vphi,c_0)) <\ve\,r^{n+1}.$$
\end{lemma}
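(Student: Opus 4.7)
The plan is to derive Lemma \ref{lemcompactnorm1} from Lemma \ref{lemcompact} by a standard translation-dilation argument. The same $\delta$ that works for $\ve$ in Lemma \ref{lemcompact} will work here (with no genuine dependence on $r$).

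First I would normalize the measure. Define the map $T(z) = x_0 + rz$ and set $\tilde\mu(E) = r^{-n}\mu(T(E))$. Then $\tilde\mu$ is $n$-AD-regular with the same constant $c_0$, and $0 \in \supp(\tilde\mu)$. The next step is a scaling identity for the square function kernel. Because $\vphi_t(x) = t^{-n}\vphi(x/t)$ is homogeneous of degree $-n$ in $(x,t)$, so is $\partial_\vphi(x,t) = t\,\partial_t\vphi_t(x)$, giving
\[
\partial_\vphi(r(w-z),\,rs) = r^{-n}\partial_\vphi(w-z,\,s).
\]
Substituting $y = x_0 + rw$ in $\wt\Delta_{\mu,\vphi}(x_0+rz,\,rs) = \int \partial_\vphi(y - x_0 - rz,\,rs)\,d\mu(y)$ and using $d\mu(x_0+rw) = r^n\,d\tilde\mu(w)$, one obtains
\[
\wt\Delta_{\mu,\vphi}(x_0 + rz,\,rs) \;=\; \wt\Delta_{\tilde\mu,\vphi}(z,s).
\]

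Next I would change variables in the hypothesis. Writing $x = x_0 + rz$ and $t = rs$, the integrand $|\wt\Delta_{\mu,\vphi}(x,t)|\,d\mu(x)\,dt$ becomes $|\wt\Delta_{\tilde\mu,\vphi}(z,s)|\,r^n\,d\tilde\mu(z)\cdot r\,ds$, while $x \in \bar B(x_0,\delta^{-1}r)$ corresponds to $z \in \bar B(0,\delta^{-1})$ and $t\in[\delta r,\delta^{-1}r]$ to $s\in[\delta,\delta^{-1}]$. Thus the hypothesis
\[
\int_{\delta r}^{\delta^{-1}r}\!\int_{\bar B(x_0,\delta^{-1}r)}|\wt\Delta_{\mu,\vphi}(x,t)|\,d\mu(x)\,dt \leq \delta\,r^{n+1}
\]
is equivalent, after dividing by $r^{n+1}$, to
\[
\int_\delta^{\delta^{-1}}\!\int_{\bar B(0,\delta^{-1})}|\wt\Delta_{\tilde\mu,\vphi}(z,s)|\,d\tilde\mu(z)\,ds \leq \delta,
\]
which is exactly the hypothesis of Lemma \ref{lemcompact} applied to $\tilde\mu$. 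Taking $\delta = \delta(\ve)$ from that lemma, we conclude $\dist_{B(0,1)}(\tilde\mu,\wt\UU(\vphi,c_0)) < \ve$.

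Finally I would transfer the conclusion back to $\mu$. If $g$ is a $1$-Lipschitz function with $\supp g \subset B(0,1)$, then $h(y) = r\,g((y-x_0)/r)$ is $1$-Lipschitz with $\supp h \subset B(x_0,r)$, and $\int g\,d\tilde\mu = r^{-n-1}\int h\,d\mu$. This bijection, applied simultaneously to $\mu$ and to any candidate $\sigma$, yields
\[
\dist_{B(x_0,r)}(\mu,\sigma) \;=\; r^{n+1}\,\dist_{B(0,1)}(\tilde\mu,\tilde\sigma).
\]
A parallel check using the identity $\vphi_s*\tilde\mu(z) = \vphi_{rs}*\mu(x_0+rz)$ (which follows from the same homogeneity of $\vphi_t$) shows that $\sigma \mapsto \tilde\sigma$ is a bijection of $\wt\UU(\vphi,c_0)$ with itself. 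Therefore $\dist_{B(x_0,r)}(\mu,\wt\UU(\vphi,c_0)) = r^{n+1}\dist_{B(0,1)}(\tilde\mu,\wt\UU(\vphi,c_0)) < \ve\,r^{n+1}$, as desired. The only real care needed is bookkeeping the $r$-powers through the scalings of $\partial_\vphi$, $d\mu$, Lipschitz seminorms, and the class $\wt\UU(\vphi,c_0)$.
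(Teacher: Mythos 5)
Your proposal is correct and is precisely the argument the paper intends: the paper's proof is a one-sentence sketch ("let $T$ be an affine map taking $B(x_0,r)$ to $B(0,1)$, set $\sigma=r^{-n}T\#\mu$, and apply the preceding lemma"), and you have filled in exactly the needed bookkeeping, namely the homogeneity $\partial_\vphi(ru,rs)=r^{-n}\partial_\vphi(u,s)$, the identity $\wt\Delta_{\mu,\vphi}(x_0+rz,rs)=\wt\Delta_{\tilde\mu,\vphi}(z,s)$, the $r^{n+1}$ scaling of both the hypothesis integral and of $\dist_B$, and the invariance of the class $\wt\UU(\vphi,c_0)$ under the affine renormalization.
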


\begin{proof}
Let $T:\R^d\to\R^d$ be an affine map which maps $B(x_0,r)$ to $B(0,1)$. Consider the image measure
$\sigma=\frac1{r^n}\,T\#\mu$, where as usual $T\#\mu(E):= \mu (T^{-1}(E))$, and apply the preceding lemma to $\sigma$.
\end{proof}

\begin{definition}
\label{uni}
Given $n>0$, a Borel measure $\mu$ in $\R^d$ is called $n$-uniform if there exists a constant $c>0$
such that
$$\mu(B(x,r))=c\,r^n\quad\mbox{for all $x\in\supp(\mu)$ and $r>0$.}$$
\end{definition}

We will denote by $\UU(c_1)$ the collection of all $n$-uniform measures with constant $c_1$. 
By the following lemma, it turns out that $\wt\UU(\vphi,\cdot)$ and $\UU(\cdot)$ coincide.

\begin{lemma}\label{delellis}
Let $f:[0,\infty)\to[0,\infty)$ be defined either by $f(x) = e^{-x^{N}}$, for some $N\in\N$, or by $f(x) = (1 + x)^{-a}$, for some $a>n/2$. 
Let $\mu$ be a $n$-dimensional AD-regular  Borel measure  in $\R^{d}$.
 Then $\mu$ is $n$-uniform if and only if
there exists some constant $c>0$ such that
\begin{equation}\label{const_eq}
\int f\biggl({{|x-y|^2} \over {t^2}}\biggr) d\mu(y) =c\,t^n\quad\mbox{for all $x\in\supp(\mu)$ and $t>0$.}
\end{equation}
\end{lemma}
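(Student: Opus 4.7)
I plan the proof along the following lines.

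\medskip

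\textbf{Easy direction.} If $\mu$ is $n$-uniform with $\mu(B(x,r))=c_0 r^n$ for all $x\in\supp\mu$, $r>0$, I apply the layer-cake formula. Since $f$ is strictly decreasing on $[0,\infty)$, the superlevel sets $\{y: f(|x-y|^2/t^2)>s\}$ are balls $B(x,\,t\sqrt{f^{-1}(s)})$ (for $s$ in the range of $f$), and
\[
\int f(|x-y|^2/t^2)\,d\mu(y)
= c_0\, t^n \int_0^{f(0)} \bigl(f^{-1}(s)\bigr)^{n/2}\,ds.
\]
The last integral is finite precisely because of the integrability assumptions on $f$: for $f(x)=e^{-x^N}$ it holds for every $N\in\N$, and for $f(x)=(1+x)^{-a}$ one gets $s^{-1/a}$ behavior near $0$, integrable in $s$ iff $n/(2a)<1$, i.e.\ $a>n/2$. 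This yields \eqref{const_eq} with an explicit constant $c$.

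\medskip

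\textbf{Hard direction.} Assume \eqref{const_eq} holds and set $g(r):=\mu(B(x,r))$ for a fixed $x\in\supp\mu$. By AD-regularity $g(r)\le c_0 r^n$, so all integrals below converge absolutely. The same layer-cake computation as above (now read in the opposite direction) shows that \eqref{const_eq} is equivalent to the integral equation
\[
\int_0^\infty g(r)\,(-f')\!\left(\tfrac{r^2}{t^2}\right)\frac{2r}{t^2}\,dr \;=\; c\,t^n
\qquad\text{for all }t>0.
\]

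\textbf{Reduction to a Laplace/Mellin identity.} For $f(x)=e^{-x^N}$, substitute $v=r^{2N}$ and $u=t^{-2N}$ in the above equation; it becomes
\[
\int_0^\infty g(v^{1/(2N)})\,e^{-uv}\,dv \;=\; c'\,u^{-n/(2N)-1}.
\]
Since the Laplace transform of $v^{n/(2N)}$ equals $\Gamma(n/(2N)+1)\,u^{-n/(2N)-1}$, uniqueness of the Laplace transform on locally bounded functions with polynomial growth forces $g(v^{1/(2N)})=c''\,v^{n/(2N)}$, i.e.\ $g(r)=c''\,r^n$. Since $x$ was an arbitrary point of $\supp\mu$ and the constant $c''$ depends only on $c$ and $f$ (not on $x$), this shows $\mu$ is $n$-uniform.

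For $f(x)=(1+x)^{-a}$, I use the subordination identity
\[
(1+x)^{-a}\;=\;\frac{1}{\Gamma(a)}\int_0^\infty u^{a-1}e^{-u}\,e^{-ux}\,du,
\]
plug it into \eqref{const_eq} with $x\mapsto |x-y|^2/t^2$, and apply Fubini (justified by $a>n/2$ and AD-regularity). The inner integral against $\mu$ becomes a Gaussian moment at scale $t/\sqrt{u}$; taking a suitable Mellin/Laplace transform in $t$ strips off the $u$-factor and reduces matters to the Gaussian case already handled, yielding again $g(r)=c''\,r^n$.

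\medskip

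\textbf{Main obstacle.} The non-routine step is the injectivity argument in the hard direction: one must ensure that the integral equation for $g$ really pins down $g(r)=c''r^n$ rather than merely $g(r)=c''r^n$ plus some oscillation invisible to the transform. The AD-regularity of $\mu$ is essential here: the bound $g(r)\lesssim r^n$ places $g(\cdot^{1/(2N)})$ safely in the uniqueness class for the one-sided Laplace transform, and monotonicity of $g$ rules out singular corrections. The cleanest way to package both cases is to observe that the assumptions on $f$ guarantee $(-f')(r^2)\,r\,dr$ pushes forward (under $v=r^2$ or $v=r^{2N}$) to a measure whose Laplace-type transform against $g$ is injective on functions of polynomial growth, which is exactly the content of the classical Lerch uniqueness theorem.
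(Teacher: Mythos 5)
Your argument is correct in outline, but it takes a genuinely different route from the paper. The paper handles both choices of $f$ uniformly by a density argument: defining $Df(x)=xf'(x)$, one checks that applying $t\partial_t$ to \eqref{const_eq} produces the same type of identity with $f$ replaced by $Df$, so the class $\cB$ of $g\in L^1((0,\infty))$ for which $\int g(|x-y|^2/t^2)\,d\mu(y)=c_g\,t^n$ contains $\operatorname{span}\{D^mf:m\ge0\}$, which is dense in $L^1((0,\infty))$ by Weierstrass approximation (and this is exactly where the choice $e^{-x^N}$ or $(1+x)^{-a}$ enters); since $\cB$ is closed, $\chi_{(0,1)}\in\cB$, which is the claim. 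This is short, treats both kernels at once, and never touches a transform explicitly. Your route instead converts \eqref{const_eq} by layer-cake into an integral equation for $g(r)=\mu(B(x,r))$ and then applies uniqueness of the one-sided Laplace transform (Lerch): for $e^{-x^N}$ a direct substitution produces a clean Laplace identity; for $(1+x)^{-a}$ you subordinate to the Gaussian via $(1+x)^{-a}=\Gamma(a)^{-1}\int_0^\infty u^{a-1}e^{-u}e^{-ux}\,du$ and reduce to the $N=1$ case. I checked the bookkeeping: writing $G(s)=\int e^{-|x-y|^2/s^2}d\mu(y)$ and $H(s)=G(s)/s^n$, the subordinated identity becomes, after $w=t/\sqrt{u}$, $v=1/w^2$, $s=t^2$, a Laplace equation $\int_0^\infty v^{a-n/2-1}H(v^{-1/2})e^{-sv}\,dv=\text{const}\cdot s^{-(a-n/2)}$, which (using $a>n/2$ and the AD bound $H\le c_0$) forces $H\equiv\text{const}$, i.e.\ the $N=1$ hypothesis; that is the step you should spell out. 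The AD-regularity bound $g(r)\lesssim r^n$ indeed places the transformed function in the uniqueness class, and monotonicity and right-continuity of $g$ upgrade the a.e.\ conclusion $g(r)=c''r^n$ to everywhere, so your identification of the ``main obstacle'' and its resolution via Lerch are both on target. Each approach has its merits: the paper's density argument is more unified and avoids explicit substitutions; yours is more constructive and makes the uniqueness mechanism transparent.
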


For $f(x) = e^{-x}$ this lemma is due to De Lellis (see pp. 60-61 of \cite{del}) and our proof closely follows his argument.

\begin{proof}  It is clear that \eqref{const_eq} holds if $\mu$ is $n$-uniform.  Now assume \eqref{const_eq}.
Write $Df(x) = x\,f'(x)$. Then
\begin{equation}\label{dense_eq} 
{\rm {span}}\bigl\{D^m f: m \geq 0\bigr\}\;
{\rm {~is ~dense ~in~}} L^1((0,\infty)),
\end{equation}
By the Weierstrass approximation theorem and our particular choice of $f$.

Let $\cB$ be the set of $g \in L^1((0,\infty))$ for which there is a constant $c_g$ such that 
$$
\int g\biggl({{|x-y|^2} \over {t^2}}\biggr) d\mu(y) = c_g t^n.
$$
Then $f \in \cB$, by the hypothesis \eqref{const_eq}.  Differentiating \eqref{const_eq} with respect to $t$ shows that $Df(x) = x\,f'(x) \in \cB$ with constant $-2cn$ independent of $x$. Then by induction and the assumption \eqref{dense_eq} $\cB$ contains a dense subset of $L^1((0,\infty))$.  Since $\cB$ is closed in $L^1((0,\infty))$, it follows that $\chi_{(0,1)} \in \cB$ and the lemma is proved.   
\end{proof}

\begin{lemma} \label{lemweak}
Let $\mu$ be an $n$-AD-regular measure in $\R^d$ such that $x_0\in\supp(\mu)$. For all
$\ve>0$, there exists a constant $\delta:=\delta(\ve)
>0$  such that
if, for some $r>0$,
$$\int_{\delta\,r}^{\delta^{-1}\,r}\!\!
 \int_{x\in \bar B(x_0,\delta^{-1}r)} |\wt \Delta_{\mu,\vphi} (x,t)|^2\,d\mu(x)\,\frac{dt}t \leq \delta^{n+4}\,r^n,$$
then there exists some constant $c_1>0$ such that
\begin{equation}
\label{wcdm}
|\mu (B(y,t))-c_1 t^n|<\ve r^n
\end{equation}
for all $y \in B(x_0,r) \cap \supp (\mu)$ and $0<t\leq r$.
\end{lemma}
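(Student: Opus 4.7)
The plan is to reduce the $L^2$ hypothesis to the $L^1$ hypothesis of Lemma \ref{lemcompactnorm1} via Cauchy--Schwarz, apply Lemma \ref{lemcompactnorm1} at an enlarged scale $R=4r$ together with Lemma \ref{delellis} to find an $n$-uniform measure $\sigma$ with $\dist_{B(x_0,R)}(\mu,\sigma)$ small, and then convert this closeness into pointwise density estimates by bracketing $\chi_{B(y,t)}$ between Lipschitz cutoffs.

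I begin with Cauchy--Schwarz on $(\delta r, \delta^{-1}r) \times \bar B(x_0, \delta^{-1}r)$ with weight $d\mu(x)\,dt/t$. Since $n$-AD-regularity gives $\int\!\!\int t\, d\mu\, dt \lesssim \delta^{-n-2} r^{n+2}$, the hypothesis implies
$$\int_{\delta r}^{\delta^{-1}r}\!\!\int_{\bar B(x_0,\delta^{-1}r)} |\wt\Delta_{\mu,\vphi}(x,t)|\,d\mu(x)\,dt \lesssim \delta\, r^{n+1}.$$
Let $\ve'>0$ be chosen later and $\delta'=\delta'(\ve')$ the constant from Lemma \ref{lemcompactnorm1}. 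For $\delta$ sufficiently small in terms of $\delta'$, the above inequality implies the hypothesis of Lemma \ref{lemcompactnorm1} at scale $R=4r$, so Lemmas \ref{lemcompactnorm1} and \ref{delellis} produce an $n$-uniform $\sigma \in \UU(c_1)$ (with $c_0^{-1}\lesssim c_1 \lesssim c_0$) such that $\dist_{B(x_0,4r)}(\mu,\sigma) \lesssim \ve' r^{n+1}$.

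For $y \in B(x_0,r)\cap\supp(\mu)$, set $\rho := \dist(y,\supp\sigma)$ and fix $y' \in \supp\sigma$ with $|y-y'|=\rho$. Testing $\mu-\sigma$ against the $1$-Lipschitz bump $x\mapsto \max(0,\rho/2-|x-y|)$---which is supported in $B(x_0,4r)$ once $\rho \leq 2r$, vanishes on $\supp\sigma$, and has $\mu$-integral at least $c\rho^{n+1}$ by AD-regularity---gives $\rho \lesssim (\ve')^{1/(n+1)} r$. For $0 < t \leq r$ and $\eta \in (0,r]$, take $1/\eta$-Lipschitz cutoffs with $\chi_{B(y,t-\eta)} \leq f^{-}_\eta \leq \chi_{B(y,t)} \leq f^{+}_\eta \leq \chi_{B(y,t+\eta)}$. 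Testing $\mu-\sigma$ against $\eta f^{\pm}_\eta$ (which are $1$-Lipschitz and supported in $B(x_0,4r)$), and combining with the $n$-uniformity inequalities $\sigma(B(y,t+\eta)) \leq c_1(t+\eta+\rho)^n$ and $\sigma(B(y,t-\eta)) \geq c_1(t-\eta-\rho)_+^n$, yields
$$|\mu(B(y,t)) - c_1 t^n| \lesssim (\eta+\rho)\, r^{n-1} + \ve' r^{n+1}/\eta.$$
Choosing $\eta = \sqrt{\ve'}\, r$ bounds the right-hand side by $C((\ve')^{1/(n+1)} + \sqrt{\ve'})\, r^n < \ve r^n$ for $\ve'$ small enough in terms of $\ve$, which fixes $\delta$ and completes the proof.

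The main technical point is to control $|\mu(B(y,t)) - c_1 t^n|$ uniformly for all $0 < t \leq r$ using a single auxiliary measure $\sigma$. Since the target error $\ve r^n$ is absolute in $t$, one can take the Lipschitz scale $\eta$ independent of $t$ (as long as $t + \eta \lesssim r$), and all error terms---Wasserstein, Lipschitz annulus, and support displacement---remain uniformly controlled.
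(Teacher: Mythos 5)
Your argument is correct and follows the same route as the paper's: Cauchy--Schwarz to pass from the $L^2$ to the $L^1$ hypothesis, Lemma \ref{lemcompactnorm1} together with Lemma \ref{delellis} to produce an $n$-uniform $\sigma$ with $\dist_{B}(\mu,\sigma)$ small, and then Lipschitz bump functions to convert Wasserstein-type closeness into a pointwise density estimate. Your one genuine addition is the explicit bound $\rho=\dist(y,\supp\sigma)\lesssim(\ve')^{1/(n+1)}r$, which correctly accounts for the fact that the auxiliary measure $\sigma$ need not share $\supp\mu$ --- a detail the paper's proof passes over silently when it writes $\sigma(B(y,t(1+\eta)))\approx c_1 t^n(1+\eta)^n$ for $y\in\supp\mu$; the remaining differences (additive- versus multiplicative-width cutoffs, choosing a fixed $\eta=\sqrt{\ve'}\,r$ versus the paper's case split at $t\approx\ve_2^{1/2}r$) are cosmetic.
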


\begin{proof} 
Let $\ve>0$. By Cauchy-Schwarz, we have
\begin{align*}
\int_{\delta\,r}^{\delta^{-1}\,r}\!\! &
 \int_{x\in \bar B(x_0,\delta^{-1}r)} |\wt \Delta_{\mu,\vphi} (x,t)|\,d\mu(x)\,dt\\
  & \leq \left[\int_{\delta\,r}^{\delta^{-1}\,r}\!\!
 \int_{x\in \bar B(x_0,\delta^{-1}r)} |\wt \Delta_{\mu,\vphi} (x,t)|^2\,d\mu(x)\,\frac{dt}t\right]^{1/2} 
 \left[\int_{\delta\,r}^{\delta^{-1}\,r}\!\!
 \int_{x\in \bar B(x_0,\delta^{-1}r)} t\,d\mu(x)\,dt\right]^{1/2}\\
&\\
&\leq c \bigl[\delta^{n+4}\,r^n\bigr]^{1/2}\,\bigl[\delta^{-2}\,r^2\,\mu(B(x_0,\delta^{-1}r))\bigr]^{1/2}\\
 &\\
& \leq c\, \bigl[\delta^{(n+4)/2}\,r^{n/2}\bigr]\,\bigl[\delta^{-(n+2)/{2}}\,r^{(n+2)/2}\bigr] = c\, \delta\,r^{n+1}.
\end{align*}

Hence for any $\ve_1 > 0$  we see that if $\delta$ is small enough  then by Lemma 3.2,  
$$\dist_{B(x_0,3r)}(\mu,\wt\UU(\vphi,c_0)) <\ve_1\,r^{n+1}$$ 
and  there exists $\sigma \in \UU(c_1)$ such that $\dist_{B(x_0,3r)}(\mu,\sigma) <\ve_1\,r^{n+1}$ for a suitable constant $c_1$.

Let $y \in B(x_0,r)$ and for $0<s \leq r$ consider a smooth bump function $\wt\chi_{y,s}$ such that $ \chi_{B(y,s)}  \leq \wt \chi_{y,s} \leq \chi_{B(y,s(1+\eta))}$ and $\|\nabla \wt\chi_{y,s}\|_\infty \leq \frac c{s\eta}$, where $\eta$ is some small constant to be determined later.
For $y \in B(x_0,r)$ and for $0<s \leq r$, we have
\begin{equation}
\label{intsm}
\begin{split}
&\left| \int \wt\chi_{y,s} (x)d\mu(x)-\int \cys (x) d \sigma (x)\right|\\
&\leq \|\nabla \wt\chi_{y,s} \|_\infty \,\dist_{B(x_0,3r)}(\mu,\sigma) \leq c\frac{\ve_1\,r^{n+1}}{\eta \, s}.
\end{split}
\end{equation}
Therefore by \eqref{intsm} and Lemma \ref{delellis}, for $0<t\leq r$,
\begin{equation}
\begin{split}
\mu(B(y,t)) &\leq \int \wt\chi_{y,t}(x)\, d \mu (x) \leq  \int \wt\chi_{y,t}(x)\, 
d \sigma (x) + c\frac{\ve_1\,r^{n+1}}{\eta \, t} \\
&\leq c_1 t^{n}(1+\eta)^n+c\frac{\ve_1 \,r^{n+1}}{\eta \, t}, 
\end{split}
\end{equation}
and
\begin{equation}
\begin{split}
\mu(B(y,t)) &\geq \int \wt\chi_{y,\frac t{1+\eta}}(x)\, d \mu (x) \geq  \int \wt\chi_{y,\frac t{1+\eta}}(x)\, d \sigma (x) - c\frac{\ve_1\,r^{n+1}}{\eta\, t} \\
&\geq c_1 \frac{t^{n}}{(1+\eta)^n}-c\frac{\ve_1\,r^{n+1}}{\eta\, t}. 
\end{split}
\end{equation}
Choosing $\eta$ and $\ve_1$ appropriately,  we get that for some small $\ve_2:=\ve_2(\ve_1, \eta)$,
\begin{equation}
|\mu (B(y,t))-c_1 t^n|\leq \ve_2 \left( \frac{r^{n+1}}{t}+ t^n \right).
\end{equation}
Hence if $t> {\ve_2}^{1/2}r$, then because  $t^n \leq r^{n+1}
/t$,
$$|\mu (B(y,t))-c_1 t^n| \leq c\,{\ve_2} \,\frac{r^{n+1}}{{\ve_2}^{1/2}r}\leq c\, \ve_2^{1/2}r^n.$$
On the other hand, if $t \leq {\ve_2}^{1/2}r$, then by the AD-regularity of $\mu$,
$$|\mu (B(y,t))-c_1 t^n| \leq \mu(B(y,t))+c_1 t^n \leq c ({\ve_2}^{1/2})^n\,r^n.$$
Therefore, since $\lim_{\ve_1 \to 0, \eta \to 0} \ve_2 =0$,  \eqref{wcdm} holds if $\ve_1$ and $\eta$ are
sufficiently small. 
\end{proof}

\begin{lemma}\label{lemcarleson}
Let $\mu$ be an $n$-AD-regular measure. Assume that $|\wt \Delta_{\mu,\vphi} (x,r)|^2\,d\mu(x)\,\frac{dr}r$ is a Carleson measure on $\supp(\mu)\times(0,\infty)$. Then the weak constant density condition holds for $\mu$.
\end{lemma}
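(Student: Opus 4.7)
The plan is to deduce the WCD estimate pointwise from Lemma \ref{lemweak} and then convert the resulting containment into a Carleson set estimate by a Chebyshev/Fubini argument driven by the assumed Carleson measure hypothesis.

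Fix $\ve>0$ and let $\delta=\delta(\ve)>0$ be given by Lemma \ref{lemweak}. Define the ``bad'' set
$$
\maxbad_\delta:=\Bigl\{(x_0,r)\in\supp(\mu)\times(0,\infty):\int_{\delta r}^{\delta^{-1}r}\!\int_{B(x_0,\delta^{-1}r)}|\wt\Delta_{\mu,\vphi}(x,t)|^2\,d\mu(x)\,\frac{dt}{t}>\delta^{n+4}r^n\Bigr\}.
$$
If $(x_0,r)\notin\maxbad_\delta$, Lemma \ref{lemweak} produces a constant $c_1$ (forced into a range depending only on $c_0$ by the AD-regularity of $\mu$, since for $t=r$ one has $|\mu(B(y,r))-c_1 r^n|<\ve r^n$) with $|\mu(B(y,t))-c_1 t^n|<\ve r^n$ for all $y\in B(x_0,r)\cap\supp(\mu)$ and $0<t\leq r$. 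The measure $\sigma:=c_1^{-1}\mu$ then satisfies $\supp(\sigma)=\supp(\mu)$, is AD-regular with a constant $C$ depending only on $c_0$, and satisfies $|\sigma(B(y,t))-t^n|\leq c_1^{-1}\ve r^n$. Hence $(x_0,r)\in G(C,\ve')$ with $\ve'\lesssim \ve$, so $G(C,\ve')^c\subset\maxbad_\delta$.

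It remains to show that $\maxbad_\delta$ is a Carleson set. By Chebyshev,
$$
\chi_{\maxbad_\delta}(x_0,r)\leq\frac{1}{\delta^{n+4}r^n}\int_{\delta r}^{\delta^{-1}r}\!\int_{B(x_0,\delta^{-1}r)}|\wt\Delta_{\mu,\vphi}(x,t)|^2\,d\mu(x)\,\frac{dt}{t}.
$$
Fix $z_0\in\supp(\mu)$ and $R>0$, integrate against $d\mu(x_0)\,dr/r$ over $B(z_0,R)\times(0,R)$, and exchange the order of integration. For fixed $(x,t)$, the contributing $(x_0,r)$ satisfy $r\in[\delta t,\delta^{-1}t]\cap(0,R)$ and $x_0\in B(z_0,R)\cap B(x,\delta^{-1}r)$; the $\mu$-mass of the latter ball is $\lesssim (\delta^{-1}r)^n$ by AD-regularity, so the $(x_0,r)$ integral of $r^{-n}\,d\mu(x_0)\,dr/r$ is bounded by $c\,\delta^{-n}|\log\delta|$. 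Therefore
$$
\int_0^R\!\int_{B(z_0,R)}\chi_{\maxbad_\delta}(x_0,r)\,d\mu(x_0)\,\frac{dr}{r}\lesssim\frac{|\log\delta|}{\delta^{2n+4}}\int_0^{\delta^{-1}R}\!\int_{B(z_0,(1+\delta^{-1})R)}|\wt\Delta_{\mu,\vphi}(x,t)|^2\,d\mu(x)\,\frac{dt}{t}.
$$
The Carleson measure hypothesis, applied to the ball of radius $(1+\delta^{-1})R$ centered at $z_0\in\supp(\mu)$, bounds the right-hand side by $C\,\delta^{-n}R^n$, yielding the WCD inequality \eqref{wcdcon} with constant $C(\ve)\approx|\log\delta(\ve)|\,\delta(\ve)^{-3n-4}$.

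The argument is essentially routine once Lemma \ref{lemweak} is in hand; the only mild care needed is in (i) verifying that the $c_1$ produced pointwise is uniformly bounded so that $\sigma:=c_1^{-1}\mu$ enters $G(C,\ve')$ with an absolute constant $C$, and (ii) tracking the powers of $\delta$ through the Chebyshev/Fubini swap. No further geometric input about $\mu$ is required.
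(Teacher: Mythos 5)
Your proposal is correct and takes essentially the same route as the paper: invoke Lemma \ref{lemweak}, observe that the bad set is contained in a Chebyshev superlevel set of the averaged square function, and then run Chebyshev plus Fubini and the Carleson hypothesis. The only thing you add beyond the paper's proof is the (welcome) explicit check that the pointwise constant $c_1$ from Lemma \ref{lemweak} is uniformly comparable to $1$ so that $\sigma=c_1^{-1}\mu$ witnesses membership in $G(C,\ve')$, a step the paper leaves implicit.
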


\begin{proof}
Let $\ve>0$ and let $A:=A_\ve\subset \R^d\times \R$ consist of those pairs $(x,r)$ such that \eqref{wcdm} does not hold.
We have to show that
$$\int_0^R \int_{x\in B(z,R)} \chi_A(x,r)\,d\mu(x)\,\frac{dr}r \leq c(\ve)\,R^n
\quad \mbox{ for all $z\in\supp(\mu)$, $r>0$.}$$
To this end, notice that if $(x,r)\in A$, then
$$\int_{\delta\,r}^{\delta^{-1}\,r}\!\!
 \int_{y\in \bar B(x,\delta^{-1}r)} |\wt \Delta_{\mu,\vphi} (y,t)|^2\,d\mu(y)\,\frac{dt}t \geq \delta^{n+4}\,r^n,$$
where $\delta=\delta(\ve)$ is as in Lemma \ref{lemweak}.
Then by Chebychev's inequality,
\begin{align*}
\int_0^R \int_{x\in B(z,R)} &\chi_A(x,r)\,d\mu(x)\,\frac{dr}r \\
& \leq
\int_0^R \int_{x\in B(z,R)} \frac{1}{\delta^{n+4}\,r^n}\left(
\int_{\delta\,r}^{\delta^{-1}\,r}\!\!
 \int_{y\in \bar B(x,\delta^{-1}r)} |\wt \Delta_{\mu,\vphi} (y,t)|^2\,d\mu(y)\,\frac{dt}t\right)
\,d\mu(x)\,\frac{dr}r\\
& \leq \int_0^{\delta^{-1}R} \int_{|y-z|\leq (1+\delta^{-1})R} |\wt \Delta_{\mu,\vphi} (y,t)|^2\,
\int_{\delta \,t}^{\delta^{-1}t}
\frac{\mu(B(y,\delta^{-1}r))}{\delta^{n+4}\,r^{n+1}}  
 dr\,
\,d\mu(y)\,\frac{dt}t.
\end{align*}
But since
$$\int_{\delta \,t}^{\delta^{-1}t}
\frac{\mu(B(y,\delta^{-1}r))}{\delta^{n+4}\,r^{n+1}}  
 dr\leq c_0\,\delta^{-2(n+2)}
\int_{\delta \,t}^{\delta^{-1}t}
\frac{dr}{r}  \leq c_0\,\delta^{-2(n+3)},
$$
we then get
\begin{equation*}
\begin{split}
\int_0^R\! \int_{x\in B(z,R)}\! & \chi_A(x,r)\,d\mu(x)\,\frac{dr}r \\
&\leq c_0\,\delta^{-2(n+3)}\!\!
\int_0^{\delta^{-1}R}\! \!\int_{|y-z|\leq (1+\delta^{-1})R} |\wt \Delta_{\mu,\vphi} (y,t)|^2
\,d\mu(y)\,\frac{dt}t\leq c\,\delta^{-2n-7}R^n,
\end{split}
\end{equation*}
which is what we needed to show. 
\end{proof}

As an immediate corollary of Theorem \ref{wcdur} and Lemma \ref{lemcarleson} we obtain the following.

\begin{theorem}
\label{main2od2}
If $\mu$ is an $n$-AD-regular measure in $\R^d$ and if $c$ is a constant such that for any ball $B(x_0,R)$ 
with center $x_0 \in \supp(\mu)$,
$$\int_0^R \int_{x\in B(x_0,R)} |\wt \Delta_{\mu,\vphi}(x,r)|^2\,d\mu(x)\,\frac{dr}r \leq c\, R^n,$$
then $\mu$ is uniformly $n$-rectifiable.
\end{theorem}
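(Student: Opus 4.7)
The plan is to observe that Theorem \ref{main2od2} follows directly by chaining together two results already established in the paper. The hypothesis of the theorem is precisely the statement that the measure $|\wt\Delta_{\mu,\vphi}(x,r)|^2\,d\mu(x)\,\frac{dr}{r}$ is a Carleson measure on $\supp(\mu)\times(0,\infty)$, with Carleson constant $c$. Hence I would first invoke Lemma \ref{lemcarleson} to conclude that $\mu$ satisfies the weak constant density condition of Definition \ref{wcd}. Once WCD is in hand, Theorem \ref{wcdur} (the characterization of uniform rectifiability via WCD, due to David--Semmes and completed in \cite{toluni}) immediately gives that $\mu$ is uniformly $n$-rectifiable.

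In more detail, the argument proceeds as follows. Fix $\ve > 0$ arbitrary, and let $G(C,\ve)$ be the set from Definition \ref{wcd} for a suitable AD-regularity constant $C$ (which will be the constant $c_1$ produced by Lemma \ref{lemweak}). Lemma \ref{lemweak} shows that if $(x,r)\in\supp(\mu)\times(0,\infty)$ and the local square function integral on the $\delta$-enlarged Carleson window over $(x,r)$ is smaller than $\delta^{n+4}r^n$, then $(x,r)\in G(C,\ve)$. Contrapositively, if $(x,r)\notin G(C,\ve)$, then that local square function integral is at least $\delta^{n+4}r^n$, with $\delta = \delta(\ve)$. A Chebyshev-type argument combined with Fubini (interchanging the outer $(x,r)$ integration with the inner $(y,t)$ integration), as carried out in the proof of Lemma \ref{lemcarleson}, then uses the global Carleson bound hypothesized in the theorem to show that
\[
\int_0^R \int_{x\in B(x_0,R)} \chi_{G(C,\ve)^c}(x,r)\,d\mu(x)\,\frac{dr}{r} \leq C(\ve)\,R^n,
\]
which is exactly \eqref{wcdcon}. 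Thus $\mu$ satisfies WCD, and Theorem \ref{wcdur} concludes the proof.

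There is essentially no new obstacle here, since all the real work has been done in the preceding section: the quantitative compactness argument in Lemma \ref{lemcompact}, the uniqueness-of-density ingredient via De~Lellis' observation in Lemma \ref{delellis}, and the extraction of the local WCD condition in Lemma \ref{lemweak}. The deepest input is hidden inside Theorem \ref{wcdur} itself, which in general codimension relies on Preiss' partial classification of uniform measures and the bilateral weak geometric lemma of David--Semmes. Thus the proof of Theorem \ref{main2od2} is a one-line deduction: apply Lemma \ref{lemcarleson} followed by Theorem \ref{wcdur}.
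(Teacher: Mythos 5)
Your proposal is correct and coincides exactly with the paper's own argument: the paper states Theorem \ref{main2od2} immediately after Lemma \ref{lemcarleson} with the words ``As an immediate corollary of Theorem \ref{wcdur} and Lemma \ref{lemcarleson} we obtain the following.'' Your additional commentary on how Lemma \ref{lemweak} feeds into Lemma \ref{lemcarleson} is accurate but simply recapitulates the already-given proof of that lemma.
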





We denote by $\UU(\vphi,c_0)$ the family of $n$-AD-regular measures with constant $c_0$ in $\R^d$ such that 
$$\Delta_{\mu,\vphi} (x,r) = 0 \quad\mbox{for all $r>0$ and all $x\in\supp(\mu)$}.$$
By an argument similar to  the proof of Lemma \ref{lemcompact}
we obtain the following.

\begin{lemma}\label{lemcompactnorm}
Let $\mu$ be an $n$-AD-regular measure with constant $c_0$ in $\R^d$ such that $x_0\in\supp(\mu)$. For all
$\ve>0$ and $r>0$ there exists a constant $\delta>0$ such that
if 
$$\int_{\delta\,r}^{\delta^{-1}\,r}\!\!
 \int_{x\in \bar B(x_0,\delta^{-1}r)} |\Delta_{\mu,\vphi} (x,t)|\,d\mu(x)\,dt \leq \delta\,r^{n+1},$$
then
$$\dist_{B(x_0,r)}(\mu,\UU(\vphi,c_0)) <\ve\,r^{n+1}.$$
\end{lemma}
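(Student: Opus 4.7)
My plan is to follow the blueprint of Lemma \ref{lemcompact} and its rescaled version Lemma \ref{lemcompactnorm1}, replacing the smooth kernel $\partial_\vphi(\cdot,t)$ by the discrete difference $\vphi_t - \vphi_{2t}$ and the class $\wt\UU(\vphi,c_0)$ by $\UU(\vphi,c_0)$. First I would pass to the normalized setting by precomposing with the affine map $T:\R^d\to\R^d$ that sends $B(x_0,r)$ to $B(0,1)$ and considering the pushforward $\sigma=\frac1{r^n}\,T\#\mu$; under this transformation the hypothesis becomes the analogue of the hypothesis of Lemma \ref{lemcompact} for $\sigma$ with $x_0=0$ and $r=1$, and the conclusion becomes the un-scaled version. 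Thus it suffices to treat $x_0=0$, $r=1$.

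I argue by contradiction. Suppose there exist $\ve>0$ and a sequence of $n$-AD-regular measures $\mu_m$ with constant $c_0$, each satisfying $0\in\supp(\mu_m)$ and
$$\int_{1/m}^{m}\int_{x\in\bar B(0,m)} |\Delta_{\mu_m,\vphi}(x,t)|\,d\mu_m(x)\,dt \leq \frac1m,$$
while $\dist_{B(0,1)}(\mu_m,\UU(\vphi,c_0))\geq\ve$. By \eqref{ad} and the Banach--Alaoglu theorem, a subsequence converges weak $*$ to an $n$-AD-regular measure $\mu$ with constant $c_0$ containing $0$ in its support. Then I would repeat verbatim the truncation-and-equicontinuity argument of Lemma \ref{lemcompact}: for a fixed $m_0$ and arbitrary $\eta>0$, choose $A>2m_0$ large enough (using the fast decay of $\vphi$, hence of $\vphi_t-\vphi_{2t}$, uniformly for $t\in[1/m_0,m_0]$) so that the tail of the convolution integral outside $B(\cdot,A)$ contributes less than $\eta$ for any AD-regular measure with constant $c_0$; then introduce a compactly supported cutoff $\wt\chi\geq\chi_{B(0,A)}$, so that the family $\{y\mapsto \wt\chi(x-y)(\vphi_t(x-y)-\vphi_{2t}(x-y)):\,(t,x)\in[1/m_0,m_0]\times\bar B(0,2m_0)\}$ is equicontinuous with common compact support. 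Weak $*$ convergence upgrades to uniform convergence on this family, and the hypothesis on $\mu_m$ gives
$$\int_{1/m_0}^{m_0}\int_{x\in\bar B(0,m_0)}|\Delta_{\mu,\vphi}(x,t)|\,d\mu(x)\,dt \leq 2\eta.$$
Letting $\eta\to0$ and then $m_0\to\infty$ yields $\int_0^\infty\int |\Delta_{\mu,\vphi}(x,t)|\,d\mu(x)\,dt=0$.

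Next I would upgrade this almost-everywhere vanishing to everywhere vanishing on $\supp(\mu)\times(0,\infty)$. Fubini gives a $\mu$-full measure set $G\subset\supp(\mu)$ such that $\Delta_{\mu,\vphi}(x,t)=0$ for a.e.\ $t>0$ whenever $x\in G$; the continuity of $t\mapsto \Delta_{\mu,\vphi}(x,t)$ then promotes this to every $t>0$. To pass from $G$ to all of $\supp(\mu)$, I would use the Lipschitz bound
$$|\Delta_{\mu,\vphi}(x,t)-\Delta_{\mu,\vphi}(y,t)|\leq\|\nabla(\vphi_t*\mu-\vphi_{2t}*\mu)\|_\infty\,|x-y|,$$
which is finite (by an annular decomposition as in \eqref{eqig1}), together with the observation that $G$ is dense in $\supp(\mu)$ since AD-regularity forces every ball centered at $\supp(\mu)$ to have positive $\mu$-measure. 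This shows $\mu\in\UU(\vphi,c_0)$. Finally, the continuity of $\dist_{B(0,1)}(\cdot,\UU(\vphi,c_0))$ under weak $*$ convergence (from \cite[Lemma 14.13]{Mattila}, after noting that $\UU(\vphi,c_0)$ is weak $*$ closed by the very same kernel-equicontinuity argument just applied to $\mu_m\in\UU(\vphi,c_0)$) yields $\dist_{B(0,1)}(\mu,\UU(\vphi,c_0))\geq\ve$, contradicting $\mu\in\UU(\vphi,c_0)$. The main obstacle I anticipate is purely bookkeeping, namely carrying through the tail/equicontinuity estimate cleanly with $\vphi_t-\vphi_{2t}$ in place of $\partial_\vphi(\cdot,t)$; the decay hypotheses on $\vphi$ make this straightforward.
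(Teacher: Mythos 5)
Your proposal is correct and follows exactly the contradiction--compactness template of Lemma~\ref{lemcompact}, which is precisely what the paper has in mind when it writes ``by an argument similar to the proof of Lemma~\ref{lemcompact}.'' You rightly observe that the final step is actually \emph{simpler} here: since membership in $\UU(\vphi,c_0)$ only requires $\Delta_{\mu,\vphi}(x,t)=0$ pointwise on $\supp(\mu)\times(0,\infty)$ (rather than constancy of $\vphi_t*\mu$ across both $t$ and $x$, as for $\wt\UU(\vphi,c_0)$), the $R_2\to\infty$ gradient trick of \eqref{eqw2200}--\eqref{eqig1} can be dispensed with, and the a.e.\ vanishing upgrades to everywhere vanishing directly via continuity in $t$ and the Lipschitz bound in $x$ combined with density of $G$. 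One small remark: your parenthetical about needing weak-$*$ closedness of $\UU(\vphi,c_0)$ is unnecessary --- once $\mu\in\UU(\vphi,c_0)$ is established, $\dist_{B(0,1)}(\mu_m,\UU(\vphi,c_0))\leq \dist_{B(0,1)}(\mu_m,\mu)\to 0$ already yields the contradiction, with no closedness hypothesis on the class required.
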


The details of the proof are left for the reader.

\begin{lemma}\label{lemmaunif}
If $\mu\in \UU(\vphi,c_0)$ then $\mu$ is supported on an $n$-rectifiable set.
\end{lemma}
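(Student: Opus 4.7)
My plan is to show that any $\mu \in \UU(\vphi, c_0)$ is in fact $n$-uniform, from which Preiss's theorem \cite{Preiss} (via the trivial fact that $n$-uniform measures have positive $n$-density everywhere, whence $n$-rectifiability) completes the proof.

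First, I would establish that $\vphi_r * \mu(x)$ is independent of $x \in \supp(\mu)$. Iterating the defining identity $\vphi_r * \mu(x) = \vphi_{2r}*\mu(x)$ on $\supp(\mu)$ gives $\vphi_r * \mu(x) = \vphi_{2^k r}*\mu(x)$ for every $k \geq 0$; coupled with the gradient estimate $\|\nabla(\vphi_R*\mu)\|_\infty \leq c/R$ from \rf{eqig1}, sending $k \to \infty$ as in the calculation around \rf{eqw2200} forces $g(r) := \vphi_r * \mu(x)$ to depend only on $r$. Since $g(r) = g(2r)$, $g$ is $\log 2$-periodic in $\log r$. A parallel Mellin-transform argument applied to the layer-cake formula $g(r) = \int_0^\infty \rho(x, ru)\,\psi(u)\,du$, where $\rho(x, s) := \mu(B(x,s))/s^n$ and $\psi$ has non-vanishing multiplicative Fourier transform, further shows that $\rho(x, s)$ is independent of $x \in \supp(\mu)$ and that $\rho(x, \cdot)$ is $\log 2$-periodic in $\log s$.

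Next I would pass to a conical tangent measure at infinity. Fix $x_0 \in \supp(\mu)$ and set $\mu_R := R^{-n} T_{x_0, R}^\# \mu$; a change of variables yields $\vphi_s * \mu_R(w) = g(sR)$ for $w \in \supp(\mu_R)$, and $\{\mu_R\}_{R>0}$ is weak-$\ast$ precompact by Ahlfors-David regularity. Since $\UU(\vphi, c_0)$ is weak-$\ast$ closed, any subsequential limit lies in $\UU(\vphi, c_0)$. The set of tangent measures at infinity is weak-$\ast$ compact and invariant under the scale action $\nu \mapsto \lambda^{-n}T_{0,\lambda}^\#\nu$, so a standard fixed-point/diagonal argument on this set (see \cite{Mattila}) extracts such a limit $\tau$ that is moreover conical. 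For this conical $\tau \in \UU(\vphi, c_0)$, the previous paragraph applied to $\tau$ makes $g_\tau(s) := \vphi_s*\tau(w)$ constant in $w \in \supp(\tau)$; conicality, through the identity $\vphi_s*\tau(w) = \vphi_{s/\lambda}*\tau(w/\lambda)$ together with $w/\lambda \in \supp(\tau)$, forces $g_\tau(s) = g_\tau(s/\lambda)$ for every $\lambda > 0$, so $g_\tau$ is constant. Hence $\tau \in \wt\UU(\vphi, c_0)$, and Lemma \ref{delellis} gives $\tau(B(w, t)) = c_1 t^n$ for some $c_1 > 0$.

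To transfer uniformity back to $\mu$, weak-$\ast$ convergence $\mu_{R_{k_j}} \to \tau$ yields $\rho(x_0, R_{k_j} t) \to c_1$ for every $t > 0$. Passing to a sub-subsequence along which $\log R_{k_j} \bmod \log 2 \to \alpha^\ast$, continuity and $\log 2$-periodicity of $\rho(x_0, \cdot)$ give $\rho(x_0, t e^{\alpha^\ast}) = c_1$ for every $t > 0$, hence $\rho(x_0, \cdot) \equiv c_1$ on $(0, \infty)$. Thus $\mu(B(x_0, t)) = c_1 t^n$ for every $x_0 \in \supp(\mu)$ and $t > 0$, so $\mu$ is $n$-uniform and Preiss's theorem finishes the proof. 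The main obstacle will be the extraction of the conical tangent measure---a fixed-point-type argument for the scale action of $(0,\infty)$ on the weak-$\ast$ compact set of tangent measures at infinity, which is standard but delicate. Every other step is routine modulo the Mellin manipulations, the application of Lemma \ref{delellis}, and the invocation of Preiss's deep classification of $n$-uniform measures at the end.
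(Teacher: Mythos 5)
Your central gap is the extraction of a conical tangent measure $\tau$ at infinity, which you flag as the main obstacle; it is not merely delicate but circular. A compact set invariant under a one-parameter flow need not contain a fixed point---the minimal invariant subset can be a periodic orbit, and here that is exactly what happens if $g$ is non-constant. Since $g_{\mu_R}(s)=g(sR)$ and $g$ is continuous and $\log 2$-periodic, any subsequential limit $\tau$ of $\mu_{R_k}$ with $\log R_k\bmod\log2\to\alpha$ satisfies $g_\tau(s)=g(se^\alpha)$; thus the dilation flow on the set of tangent measures at infinity acts, through $g_\tau$, by rotation on a circle, and a conical $\tau$ (a fixed point) exists precisely when $g$ is already constant. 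But ``$g$ constant'' is the statement $\mu\in\wt\UU(\vphi,c_0)$, i.e.\ the $n$-uniformity you are trying to establish. The paper itself signals the correct logical order: Theorem \ref{teounif} ($n$-uniformity) is derived \emph{from} Lemma \ref{lemmaunif} via rectifiability $\Rightarrow$ $\mu$-a.e.\ existence of density $\Rightarrow$ $\lim_{r\to0}\vphi_r*\mu$ exists $\mu$-a.e., which together with periodicity forces $g$ constant. Reversing that order requires supplying exactly the piece your fixed-point step does not deliver. (The Mellin step is also only sketched---one must verify that the layer-cake kernel $\psi$ lies in $L^1(dv/v)$ with nowhere-vanishing Mellin transform on the critical line, which does hold for the specific $\vphi$'s admitted here, and invoke Wiener's Tauberian theorem for $L^\infty$ to get the injectivity---but that is a secondary issue.)

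The paper's proof avoids density and uniformity entirely. From $\Delta_{\mu,\vphi}\equiv0$ on $\supp(\mu)$ it builds $F(x)=\sum_{k>0}2^{-k}\bigl(\vphi_{2^{-k}}*\mu(x)-\vphi_{2^{k}}*\mu(x)\bigr)^2$, which vanishes on $\supp(\mu)$ by hypothesis and is strictly positive off it, since $\vphi_{2^{-k}}*\mu(x)\to0$ for $x\notin\supp(\mu)$ while $\liminf_k\vphi_{2^{k}}*\mu(x)\gtrsim c_0^{-1}$ everywhere by AD-regularity. Each $\vphi_r*\mu$ extends holomorphically (to all of $\C^d$ for $\vphi=e^{-|x|^{2N}}$, to a tube for $\vphi=(1+|x|^2)^{-a}$), so $\supp(\mu)=F^{-1}(0)$ is a real analytic variety, necessarily $n$-dimensional because $\HH^n|_{\supp\mu}$ is locally finite, hence $n$-rectifiable. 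This is in the spirit of Kirchheim--Preiss and much lighter than the full strength of Preiss's rectifiability theorem. If you want to salvage your strategy, the correct move after your (plausible) Mellin observation that $\mu(B(x,r))/r^n$ is $x$-independent is precisely such an analytic-variety argument for uniformly distributed measures, not a fixed-point extraction.
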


\begin{proof}

Since $\mu\in \UU(\vphi,c_0)$ we have
\begin{equation}\label{eqkw433}
\vphi_{2^{-k}} *\mu(x) - \vphi_{2^k}*\mu(x) =0\qquad \mbox{ for all $k>0$ and all $x\in\supp(\mu)$.}
\end{equation}
Now consider the function $F:\R^d\to \R$ defined by
$$F(x) = \sum_{k>0}2^{-k}\Bigl(\vphi_{2^{-k}} *\mu(x) - \vphi_{2^k}*\mu(x)\Bigr)^2.$$
Taking into account that $|\vphi_{2^{-k}} *\mu(x) - \vphi_{2^k}*\mu(x)|\leq c$ for all $x\in\R^d$ and $k\in\N$, it is clear 
that $F(x)<\infty$ for all $x\in\R^d$, and so $F$ is well defined. 
Moreover, by \rf{eqkw433} we have $F =0$ on  $\supp(\mu)$. 

Now we claim that $F(x)>0$ for all $x\in\R^d\setminus \supp(\mu)$.
Indeed, it follows easily that
$$\lim_{k\to\infty}\vphi_{2^{-k}}*\mu(x)= 0\qquad \mbox{for all $x\in\R^d\setminus \supp(\mu)$,}$$
while, by the $n$-AD-regularity of $\mu$,
$$\liminf_{k\to\infty}\vphi_{2^{k}}*\mu(x)\geq c\, c_0^{-1}\qquad \mbox{for all $x\in\R^d$.}$$
Thus if $x\in\R^d\setminus \supp(\mu)$ we have $\vphi_{2^{-k}} *\mu(x) - \vphi_{2^k}*\mu(x)\neq0$ for all large $k>0$,
 which implies that $F(x)>0$ and proves our claim.

We have shown that for $\mu\in \UU(\vphi,c_0)$,  $\supp(\mu) = F^{-1}(0)$. Next we will show $F^{-1}(0)$  is a real analytic variety. Notice that the lemma will follow
from this assertion because   $\supp(\mu)$ has locally finite $\HH^n$ measure,
so that the analytic variety $F^{-1}(0)$ is $n$-dimensional and any $n$-dimensional real analytic variety is $n$-rectifiable.

To prove that the zero set of $F$ is a real analytic variety, it is enough to check that
$\vphi_{2^{-k}} *\mu - \vphi_{2^k}*\mu$ is a real analytic function for each $k>0$, because the
zero set of a real analytic function is a real analytic variety and the intersection
of any family  of real analytic varieties is again a real analytic variety; see \cite{nar}.   So it is
enough to show that $\vphi_r *\mu$ is a real analytic function for every $r>0$. 

In the case $\vphi(x)=e^{-|x|^{2N}}$, consider the function $f:\C^d\to\C$ defined by
$$f(z_1,\ldots,z_d)= \frac1{r^n}\int \exp\Biggl(-r^{-2N}\biggl(\sum_{i=1}^d (y_i-z_i)^2\biggr)^{N}\Biggr)\,d\mu(y).$$
It is easy to check that $f$ is well defined and holomorphic in the whole $\C^d$, and thus
$\vphi_r *\mu = f|_{\R^d}$ is real analytic.

In the case $\vphi(x)= (1+|x|^2)^{-a}$, $a>n/2$, for $(z_1,\ldots,z_d)\in\C^d$
we take 
$$
f(z_1,\ldots,z_d)= \frac1{r^n}\int \biggl(1 +r^{-2}\sum_{i=1}^d (y_i-z_i)^{2}\biggr)^{-a}\,d\mu(y).
$$
This is a holomorphic function in the open set
$$V=\Bigl\{z\in\C^d:|{\rm Im}\,z_i|<\frac r{2d^{1/2}}\;\text{for $1\leq i \leq d$}\Bigr\}.$$
Indeed, for $z\in V$, we have
$${\rm Re}\,\biggl(1 +r^{-2}\sum_{i=1}^d (y_i-z_i)^{2}\biggr)
= 1 + r^{-2}\sum_{i=1}^d \bigl((y_i-{\rm Re}\,z_i)^{2} - ({\rm Im}\,z_i)^{2} \bigr)
\geq 1 - r^{-2}\sum_{i=1}^d ({\rm Im}\,z_i)^{2} > \frac34.$$
Thus $f$ is well defined and holomorphic in $V$, and so $\vphi_r *\mu = f|_{\R^d}$ is real analytic.
\end{proof}
\vvv

\begin{theorem} 
\label{teounif}
If $\mu \in \UU(\vphi,c_0)$ then $\mu$ is $n$-uniform.
\end{theorem}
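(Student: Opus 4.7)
The plan is to show that $\vphi_t*\mu(x)$ is a single positive constant, independent of both $t>0$ and $x\in\supp(\mu)$, and then invoke Lemma~\ref{delellis} to conclude that $\mu$ is $n$-uniform.

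First I would show that for each fixed $r>0$, $x\mapsto\vphi_r*\mu(x)$ is constant on $\supp(\mu)$. Iterating the defining identity $\vphi_r*\mu(x) = \vphi_{2r}*\mu(x)$ on $\supp(\mu)$ yields $\vphi_r*\mu(x) = \vphi_{2^k r}*\mu(x)$ for every integer $k\geq 0$ and every $x\in\supp(\mu)$. Combining this with the uniform Lipschitz bound $\|\nabla(\vphi_R*\mu)\|_\infty\leq c/R$ proved in \rf{eqig1}, applied at $R=2^k r$, one obtains for any $x,y\in\supp(\mu)$
$$|\vphi_r*\mu(x) - \vphi_r*\mu(y)| = |\vphi_{2^k r}*\mu(x) - \vphi_{2^k r}*\mu(y)| \leq \frac{c\,|x-y|}{2^k r} \lra 0$$
as $k\to\infty$. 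Hence $\vphi_r*\mu\equiv C(r)$ on $\supp(\mu)$, and the defining property of $\UU(\vphi,c_0)$ becomes $C(r) = C(2r)$, so the function $C$ is log-periodic.

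Next I would show that $C$ is actually constant in $r$. By Lemma~\ref{lemmaunif} $\supp(\mu)$ is contained in an $n$-dimensional real analytic variety, so $\mu$ is $n$-rectifiable, and AD-regularity lets us write $\mu = g\,\HH^n\rest E$ with $g$ bounded above and below. At $\mu$-a.e.\ $x_0\in\supp(\mu)$ the approximate tangent $n$-plane $V_{x_0}$ through the origin and the positive density $g(x_0)$ exist, so the blow-up measures $\nu_{x_0,r}(A):=r^{-n}\mu(x_0+rA)$ converge weakly to $g(x_0)\,\HH^n\rest V_{x_0}$ as $r\to 0^+$. Using the rapid decay of $\vphi$ together with the uniform bound $\nu_{x_0,r}(B(0,R))\leq c_0 R^n$ to control the tails, one passes to the limit
$$C(r) = \int \vphi(u)\,d\nu_{x_0,r}(u) \,\xrightarrow{r\to 0^+}\, g(x_0)\int_{V_{x_0}}\vphi\,d\HH^n = g(x_0)\,c_\vphi,$$
where $c_\vphi := \int_{\R^n}\vphi\,d\HH^n > 0$. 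Evaluating along the dyadic sequence $r_0/2^k\to 0^+$ and using the log-periodicity $C(r_0) = C(r_0/2^k)$ forces $C(r_0) = g(x_0)\,c_\vphi$ for every $r_0>0$, so $C$ equals the positive constant $c := g(x_0)\,c_\vphi$.

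With $\vphi_t*\mu(x) = c$ for all $x\in\supp(\mu)$ and $t>0$, this rewrites as $\int f(|x-y|^2/t^2)\,d\mu(y) = c\,t^n$, where $f(s)=e^{-s^N}$ or $f(s)=(1+s)^{-a}$ according to the choice of $\vphi$, and Lemma~\ref{delellis} immediately yields that $\mu$ is $n$-uniform. The main obstacle in this plan is the passage to the limit $r\to 0^+$ inside $\int\vphi\,d\nu_{x_0,r}$: weak convergence of Radon measures only directly handles compactly supported continuous test functions, so one must split the integral at a large radius $R$, control the tail $\int_{|u|>R}\vphi\,d\nu_{x_0,r}$ uniformly in $r$ using the decay of $\vphi$ and the AD-regularity of $\nu_{x_0,r}$, and only then apply weak convergence on the bounded region $B(0,R)$.
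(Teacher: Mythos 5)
Your proposal is correct and is essentially the same argument as the paper's: both rely on Lemma~\ref{lemmaunif} to get rectifiability of $\supp(\mu)$, on the a.e.\ blow-up of an $n$-rectifiable measure to a flat tangent measure (the paper summarizes this as ``it follows easily that $\lim_{\ve\to 0}\vphi_\ve*\mu(x)$ exists''), on iterating the identity $\vphi_r*\mu=\vphi_{2r}*\mu$ together with the Lipschitz estimate \rf{eqig1} (exactly the argument of Lemma~\ref{lemcompact} that the paper invokes), and finally on Lemma~\ref{delellis}. The only difference is cosmetic: you establish constancy in $x$ first and then in $r$, whereas the paper does it in the reverse order; your blow-up step is also spelled out with the explicit tail estimate that the paper leaves implicit.
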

\begin{proof}
If $\mu \in \UU(\vphi, c_0)$, then 
$\vphi_r*\mu = \vphi_{2r}*\mu(x)$ for all $x\in\supp(\mu)$ and all $r>0$, and consequently  
\begin{equation}\label{eqda211}
\vphi_{2^k r} *\mu(x) = \vphi_r*\mu(x)\quad\mbox{\; for all
$1\leq r<2$, all $k\in\Z$, and all $x\in\supp(\mu)$}.
\end{equation}
By the preceding lemma $\mu$ is of the form
$$\mu = \rho\,\HH^n\rest E,$$
where $\rho$ is some positive function on $E$ bounded from above and below and $E\subset\R^d$ 
is an $n$-rectifiable set. This implies that the density
$$\Theta^n(x,\mu)=\lim_{\ve\to0}\frac{\mu(B(x,\ve))}{(2\ve)^n}$$
exists at $\mu$-a.e. $x\in\R^d$; see \cite[Theorem 16.2]{Mattila}.
It then  follows easily that
$$\lim_{\ve\to 0}\vphi_\ve*\mu(x) \quad\mbox{exists at $\mu$-a.e.\ $x\in\R^d$}$$
and with \rf{eqda211} this implies that 
$$\vphi_{R_1}*\mu(x)= \vphi_{R_2}*\mu(x)\quad\mbox{\; for all $R_1,R_2>0$ and $\mu$-a.e.\ $x\in\R^d$.}$$
Using an argument analogous to the proof of  Lemma \ref{lemcompact} we then conclude that 
$$\vphi_{R_1}*\mu(x)= \vphi_{R_2}*\mu(y)\quad\mbox{\; for all $R_1,R_2>0$ and all
$x,y\in\supp(\mu)$}.$$
Therefore, by Lemma \ref{delellis}, $\mu$ is $n$-uniform.
\end{proof}

Using Lemma \ref{lemcompactnorm} and Theorem \ref{teounif}, we can, with minor changes in
their proofs, obtain analogues of Lemmas \ref{lemweak} and \ref{lemcarleson} with $\wt\Delta_{\mu,\vphi}$ replaced by $\Delta_{\mu,\vphi}$. Hence we concluded the following.

\begin{theorem}
\label{main2od1}
If $\mu$ is an $n$-AD-regular measure in $\R^d$ and there exists a constant $c$ such that for any ball $B(x_0,R)$ centered at $\supp(\mu)$
$$\int_0^R \int_{x\in B(x_0,R)} |\Delta_{\mu,\vphi}(x,r)|^2\,d\mu(x)\,\frac{dr}r \leq c\, R^n,$$
then $\mu$ is uniformly $n$-rectifiable.
\end{theorem}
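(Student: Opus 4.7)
The plan is to imitate the chain of arguments that already gave Theorem \ref{main2od2}, substituting $\Delta_{\mu,\vphi}$ for $\wt\Delta_{\mu,\vphi}$ and using the already-established Lemma \ref{lemcompactnorm} and Theorem \ref{teounif} at the appropriate places. The strategy is: (i) prove a local pointwise density estimate analogous to Lemma \ref{lemweak}; (ii) use Chebyshev to pass from the Carleson bound to the weak constant density condition as in Lemma \ref{lemcarleson}; (iii) conclude via Theorem \ref{wcdur}.

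For step (i), fix $\ve>0$. By Cauchy--Schwarz, exactly as in the opening computation of the proof of Lemma \ref{lemweak}, the hypothesis
$$\int_{\delta r}^{\delta^{-1}r}\int_{\bar B(x_0,\delta^{-1}r)} |\Delta_{\mu,\vphi}(x,t)|^2\,d\mu(x)\,\frac{dt}{t} \leq \delta^{n+4} r^n$$
implies the $L^1$-bound
$$\int_{\delta r}^{\delta^{-1}r}\int_{\bar B(x_0,\delta^{-1}r)} |\Delta_{\mu,\vphi}(x,t)|\,d\mu(x)\,dt \leq C\,\delta\,r^{n+1}.$$
Applying Lemma \ref{lemcompactnorm} (with $\delta$ replaced by $C\delta$ and a slightly enlarged ball), we obtain $\dist_{B(x_0,3r)}(\mu,\UU(\vphi,c_0))<\ve_1\,r^{n+1}$, provided $\delta$ is small enough. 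By Theorem \ref{teounif}, any measure in $\UU(\vphi,c_0)$ is $n$-uniform, so there exists $\sigma\in\UU(c_1)$ (for some $c_1$ depending on $c_0$) with $\dist_{B(x_0,3r)}(\mu,\sigma)<\ve_1 r^{n+1}$. Testing against a smooth bump $\wt\chi_{y,s}$ with $\chi_{B(y,s)}\leq\wt\chi_{y,s}\leq\chi_{B(y,s(1+\eta))}$ and $\|\nabla\wt\chi_{y,s}\|_\infty\leq c/(s\eta)$ for $y\in B(x_0,r)$, $0<s\leq r$, the same two-sided comparison as in Lemma \ref{lemweak} gives
$$|\mu(B(y,t))-c_1 t^n|\leq \ve_2\!\left(\frac{r^{n+1}}{t}+t^n\right),$$
and splitting into the cases $t>\ve_2^{1/2}r$ and $t\leq \ve_2^{1/2}r$ (using AD-regularity in the second case) yields $|\mu(B(y,t))-c_1 t^n|<\ve r^n$ for all $y\in B(x_0,r)\cap\supp(\mu)$ and $0<t\leq r$, provided $\eta$ and $\ve_1$ were chosen small.

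For step (ii), let $A=A_\ve$ be the set of pairs $(x,r)\in\supp(\mu)\times(0,\infty)$ for which the conclusion of step (i) fails. By step (i), $(x,r)\in A$ forces
$$\int_{\delta r}^{\delta^{-1}r}\int_{\bar B(x,\delta^{-1}r)} |\Delta_{\mu,\vphi}(y,t)|^2\,d\mu(y)\,\frac{dt}{t} \geq \delta^{n+4}r^n.$$
Inserting this estimate into $\int_0^R\int_{B(z,R)}\chi_A(x,r)\,d\mu(x)\,dr/r$, swapping the order of integration, and using AD-regularity to bound $\mu(B(y,\delta^{-1}r))$, we obtain (as in Lemma \ref{lemcarleson})
$$\int_0^R\int_{B(z,R)}\chi_A(x,r)\,d\mu(x)\,\frac{dr}{r} \leq C\delta^{-2n-7}\!\int_0^{\delta^{-1}R}\!\int_{|y-z|\leq (1+\delta^{-1})R}|\Delta_{\mu,\vphi}(y,t)|^2\,d\mu(y)\,\frac{dt}{t},$$
and the Carleson hypothesis bounds the right-hand side by $C(\ve)R^n$. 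This is precisely the weak constant density condition in Definition \ref{wcd}. Finally, step (iii) invokes Theorem \ref{wcdur} to conclude that $\mu$ is uniformly $n$-rectifiable.

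I do not foresee any serious obstacle: the proof is essentially a mechanical transcription of the arguments for Theorem \ref{main2od2}. The only genuine point where care is needed is the identification of the limit class $\UU(\vphi,c_0)$ with $n$-uniform measures, since unlike the $\wt\Delta$ case this does not follow directly from Lemma \ref{delellis}; however that identification is exactly the content of Theorem \ref{teounif}, which has already been established using Lemma \ref{lemmaunif} (real-analyticity of $\vphi_r*\mu$ and rectifiability of its zero set) together with Preiss' density theorem. With Theorem \ref{teounif} in hand, steps (i)--(iii) go through verbatim.
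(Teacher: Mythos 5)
Your proposal is correct and follows exactly the route the paper itself takes: it obtains the analogue of Lemma \ref{lemweak} by combining the Cauchy--Schwarz reduction with Lemma \ref{lemcompactnorm}, uses Theorem \ref{teounif} in place of the direct appeal to Lemma \ref{delellis} to identify $\UU(\vphi,c_0)$ with $n$-uniform measures, then runs the Chebyshev argument of Lemma \ref{lemcarleson} and closes via the weak constant density criterion (Theorem \ref{wcdur}). You have also correctly flagged the one genuine substitution needed — that Theorem \ref{teounif}, rather than Lemma \ref{delellis} alone, is what shows measures in $\UU(\vphi,c_0)$ are $n$-uniform — which is exactly the point the paper highlights when it cites Lemma \ref{lemcompactnorm} and Theorem \ref{teounif} as the ingredients.
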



\begin{coro}\label{coro***}
Suppose that for any ball $B(x_0,R)$ centered at $\supp(\mu)$
\begin{equation*}
\int_0^R \int_{x\in B(x_0,R)} |\Delta_\mu(x,r)|^2\,d\mu(x)\,\frac{dr}r \leq c\, R^n.
\end{equation*}
Then $\mu$ is uniformly $n$-rectifiable.
\end{coro}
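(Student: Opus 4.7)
The plan is to derive the corollary from Theorem \ref{main2od1} by showing that the hypothesis \eqref{eqsq2} forces the smooth square function estimate \eqref{eqsqdif} for an admissible $\vphi$, say $\vphi(x) = (1+|x|^2)^{-a}$ with $a > n/2$. The point of view is that $\Delta_\mu(x,t)$ is precisely $\Delta_{\mu,\psi}(x,t)$ for the non-smooth kernel $\psi = \chi_{B(0,1)}$, so the task is to dominate a smooth convolution difference by its non-smooth analogue.

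The key step is a layer-cake representation of $\vphi$. Since $\vphi$ is radial and radially decreasing, write $\vphi(x) = \Phi(|x|)$ and use $\Phi(r) = \int_r^\infty (-\Phi'(s))\,ds$ to obtain $\vphi(x) = \int_0^\infty (-\Phi'(s))\chi_{B(0,s)}(x)\,ds$. Rescaling, subtracting, and swapping the order of integration yields
\[ \Delta_{\mu,\vphi}(x,t) = \int_0^\infty w(s)\,\Delta_\mu(x,ts)\,ds, \qquad w(s) := -\Phi'(s)\,s^n \geq 0. \]
For the $\vphi$ in Theorem \ref{main2}, $w$ is integrable on $(0,\infty)$ (the condition $a > n/2$ is exactly what is needed in the rational case, and the super-polynomial decay handles the Gaussian case $\vphi(x) = e^{-|x|^{2N}}$), and moreover $\int_0^\infty w(s)(1+\log^+ s)\,ds < \infty$.

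Applying Cauchy--Schwarz in $s$ (against the weight $w$), Fubini, and the substitution $u = ts$ reduces matters to bounding
\[ \int_0^R\!\int_{B(x_0,R)}|\Delta_{\mu,\vphi}(x,t)|^2\,d\mu(x)\,\frac{dt}{t} \lesssim \int_0^\infty w(s)\int_{B(x_0,R)}\!\int_0^{Rs}|\Delta_\mu(x,u)|^2\,\frac{du}{u}\,d\mu(x)\,ds. \]
Split the inner $u$-integral at $u = R$: for $u \leq R$ the hypothesis \eqref{eqsq2} contributes $\lesssim R^n$, while for $R < u \leq Rs$ (which only occurs when $s > 1$) invoke the trivial pointwise bound $|\Delta_\mu(x,u)| \leq 2c_0$ from $n$-AD-regularity together with $\mu(B(x_0,R)) \lesssim R^n$ to pick up $\lesssim R^n \log s$. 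Combining, the full right-hand side is $\lesssim R^n \int_0^\infty w(s)(1+\log^+ s)\,ds \lesssim R^n$, which is \eqref{eqsqdif}; Theorem \ref{main2od1} then gives uniform $n$-rectifiability.

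The only delicate point is that the layer-cake identity naturally produces scales $u = ts$ exceeding $R$, while \eqref{eqsq2} furnishes Carleson control only up to scale $R$; this excess is absorbed by the trivial $L^\infty$ bound on $\Delta_\mu$, and the ensuing logarithmic loss in $s$ is affordable precisely because $w$ decays fast enough for the admissible $\vphi$, which is where the conditions on $a$ and $N$ in Theorem \ref{main2} come in.
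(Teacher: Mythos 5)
Your proof is correct and follows essentially the same strategy as the paper's: both express the smooth bump $\vphi$ as a nonnegative superposition of normalized indicators of balls (layer-cake/convex combination), so that $\Delta_{\mu,\vphi}(x,t)$ is an average of $\Delta_\mu(x,\cdot)$ at rescaled scales, then apply Cauchy--Schwarz and Fubini and invoke Theorem~\ref{main2od1}. Your write-up is actually a touch more careful than the paper's at one point: you explicitly split the resulting $u$-integral at $u=R$ and absorb the $u>R$ tail with the trivial $L^\infty$ bound on $\Delta_\mu$ together with the rapid decay of the weight $w$, whereas the paper works with $\int_0^\infty$ throughout and leaves the localization to the Carleson window implicit.
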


\begin{proof}
We will show that \eqref{eqsq2} implies \rf{eqsqdif}, by taking a suitable convex combination,
and then apply Theorem \ref{main2od1}. 

For $R>0$ we seek a function $\wt \vphi_R:(0,\infty) \ra (0,\infty)$ such that
\begin{equation}
\label{convfi}
\frac{1}{R^n} e^{\frac{-s^2}{R^2}}=\int_0^\infty \frac1{r^n}\chi_{[0,r]}(s) \wt \vphi_R(r) \,dr=\int_s^\infty \frac{\wfi(r)}{r^n}\,dr,\quad \mbox{ for } s>0.
\end{equation}
Differentiating with respect to $s$ we get
$$-\frac{2 s}{R^{n+2}}\,e^{\frac{-s^2}{R^2}}=- \frac{\wfi(s)}{s^n}.$$
Hence \rf{convfi} is solved for $R>0$ and  $s>0$ by 
$$\wfi(s)=\frac{2 s^{n+1}}{R^{n+2}}e^{\frac{- s^2}{R^2}}.$$

Using \rf{convfi} we can now write, for $x \in \supp (\mu)$, and any $R_1>0$,
\begin{equation*}
\begin{split}
&\int_0^{\infty} |\Delta_{\mu,\vphi}(x,R)|^2 \, \frac{dR}{R}=\int_0^{\infty} |(\vphi_R -\vphi_{2R})\ast \mu(x)|^2 \,\frac{dR}{R}\\
&\;=\int_0^{\infty} \Big| \left( \int_0^\infty \!\frac 1{r^n} \chi_{[0,r]}(|\cdot |) \wfi (r)\,dr)\right) \ast \mu (x) -\left( \int_0^\infty \frac 1{r^n} \chi_{[0,r]}(|\cdot |) \wf_{2R} (r)\,dr\right)\ast \mu (x)\Big|^2 \, \frac{dR}R.
\end{split}
\end{equation*}
By a change of  variables we get
$$\int_0^\infty \frac 1{r^n} \chi_{[0,r]}(|y-x|) \wf_{2R} (r)\,dr= \int_0^\infty \!\frac 1{(2r)^n} \chi_{[0,2r]}(|y-x|) \wfi(r)\, dr.$$
Therefore, using Cauchy-Schwarz and the fact that $\int_0^\infty\wt\vphi_R(r)\,dr\lesssim1$,
we obtain
\begin{equation*}
\begin{split}
\int_0^{\infty} |\Delta_{\mu,\vphi}(x,R)|^2 \, \frac{dR}{R}&=\int_0^{\infty} \left| \int_0^{\infty} \left( \frac1{r^n} \chi_{B(0,r)}(\cdot)-\frac1{(2r)^n} \chi_{B(0,2r)}(\cdot)\right) \ast \mu (x) \, \wfi(r)\, dr \right|^2 \, \frac{dR}{R}\\
&\lesssim \int_0^{\infty}  \int_0^\infty |\Delta_\mu (x,r)|^2 \wfi(r)\, dr\, \frac{dR}{R}\\
&\lesssim \int_0^{\infty} \left( \int_0^\infty \wfi (r) \, \frac{dR}{R} \right)|\Delta_\mu (x,r)|^2 \, dr.
\end{split}
\end{equation*}
Moreover,
\begin{equation*}
\int_0^\infty \wfi (r) \, \frac{dR}{R}=2 \int_0^\infty \left(\frac{r}{R}\right)^{n+1} e^{\frac{-  r^2}{R^2}}\, \frac{dR}{R^2}=\frac2r  \int_0^\infty  t^{n+1}e^{- t^2}\,dt \lesssim \frac1r.
\end{equation*}
Hence we infer that
$$\int_0^{\infty} |\Delta_{\mu,\vphi}(x,r)|^2 \, \frac{dr}{r} \lesssim \int_0^{\infty} |\Delta_{\mu}(x,r)|^2 \, \frac{dr}{r},$$
which shows that \eqref{eqsq2} implies \rf{eqsqdif}.
\end{proof}




\section{Uniform rectifiabilty implies boundedness of smooth square functions}\label{sec:urbddsm}

Let $h:\R^d \ra \R$ be a smooth function for which there exist positive constants $c$ and $\ve$ such that
\begin{equation}
\label{hprop}
|h(x)| \leq \frac{c}{(1+|x|)^{n+\ve}}\quad\text{ and }\quad|\nabla h (x)| \leq \frac{c}{(1+|x|)^{n+1+\ve}},
\end{equation}
for all $x \in \Rd$. Furthermore assume that $$\int h(y-x) d \ha^n_{|L}(y)=0$$
for every $n$-plane $L$ and every $x \in L$. For $r>0$, denote 
$$h_r(x) = \frac1{r^n}\,h\left(\frac xr\right).$$

\begin{theorem}\label{teoq1}
Let $\mu$ be an $n$-AD-regular measure in $\R^d$.
If $\mu$ is uniformly $n$-rectifiable, then there exists a constant $c$ such that
\begin{equation}\label{eqsq11}
\int_0^R \int_{x\in B(x_0,R)}|h_r *\mu|^2\,d\mu(x)\,\frac{dr}r \leq c\, R^n,
\end{equation}
for all $x_0\in\supp(\mu)$, $R>0$.
\end{theorem}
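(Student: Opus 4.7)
The plan is to control the square function through the $\alpha$-coefficients of Tolsa, using the Carleson packing estimate \eqref{eqsq111} of Theorem \ref{teotol} together with the cancellation of $h$ over $n$-planes. First, I would discretize using the David lattice: fix a cube $R_0 \in \DD$ with $\ell(R_0) \approx R$ and $B(x_0,R) \cap \supp(\mu) \subset R_0$, so that the left side of \eqref{eqsq11} is bounded by a sum over $Q \in \DD(R_0)$ of pieces covering the scales $r \in [\ell(Q)/2,\,\ell(Q)]$ and the spatial region $Q$.

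On each such piece, for $x \in Q$ and $r \approx \ell(Q)$, I would split
\[
h_r * \mu(x) \;=\; h_r * (\mu - c_Q \HH^n_{|L_Q})(x) \;+\; c_Q \,(h_r * \HH^n_{|L_Q})(x),
\]
where $c_Q$ and $L_Q$ are the minimizers in the definition of $\alpha(Q)$. For the second term, let $x'$ denote the orthogonal projection of $x$ onto $L_Q$. Since $x' \in L_Q$, the cancellation hypothesis gives $\int h_r(x'-y)\,d\HH^n_{|L_Q}(y) = 0$, so by the gradient bound in \eqref{hprop} and the $n$-dimensionality of $L_Q$,
\[
\bigl|(h_r * \HH^n_{|L_Q})(x)\bigr| \;=\; \Bigl|\int\bigl(h_r(x-y) - h_r(x'-y)\bigr)\,d\HH^n_{|L_Q}(y)\Bigr| \;\lesssim\; \frac{\dist(x,L_Q)}{r}.
\]
Squaring and averaging over $x \in Q$ yields a $\beta_2$-type coefficient at the scale of $Q$, which is controlled by the $\alpha$-coefficients via standard comparisons (see \cite{Tolsa-lms}).

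For the first term, I would exploit the definition of $\alpha(Q)$: the function $y \mapsto h_r(x-y)$ is Lipschitz but only has polynomial decay, so I would dyadically split $\R^d$ into annuli $2^k B_Q \setminus 2^{k-1} B_Q$ for $k \geq 0$. On the $k$-th annulus the bounds in \eqref{hprop} supply factors $2^{-k(n+\ve)}\ell(Q)^{-n}$ for $h_r$ and $2^{-k(n+1+\ve)}\ell(Q)^{-n-1}$ for its gradient, which lets one compare $\mu$ and $c_Q \HH^n_{|L_Q}$ there through the $\alpha$-coefficient of the ancestor $Q^{(k)} \in \DD$ of $Q$ with $\ell(Q^{(k)}) \approx 2^k \ell(Q)$ (after replacing $c_Q\HH^n_{|L_Q}$ by $c_{Q^{(k)}}\HH^n_{|L_{Q^{(k)}}}$ at a cost of nearby $\alpha$-terms). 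Summing over annuli gives
\[
\bigl|h_r * (\mu - c_Q \HH^n_{|L_Q})(x)\bigr| \;\lesssim\; \sum_{k \geq 0} 2^{-k\ve}\, \alpha(Q^{(k)}).
\]

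Finally, squaring via Cauchy--Schwarz against the weight $2^{-k\ve/2}$, integrating over $x \in Q$ and $r \in [\ell(Q)/2,\ell(Q)]$, and then summing over $Q \in \DD(R_0)$ after exchanging the order of summation (each $P$ appears as $Q^{(k)}$ for $\lesssim 1$ choices at each generation) reduces everything to $\sum_{P \in \DD(R_0)} \alpha(P)^2 \mu(P) \lesssim \mu(R_0) \lesssim R^n$, which is precisely \eqref{eqsq111}. The same packing absorbs the $\beta_2$-type contribution from the cancellation step. The main obstacle is the polynomial rather than compact support of $h$: one must decompose into annuli and keep careful track of the interplay between the decay factor $2^{-k\ve}$ and the shift from $Q$ to its ancestor $Q^{(k)}$, so that the off-diagonal contributions remain summable against the Carleson packing of $\alpha^2\mu$.
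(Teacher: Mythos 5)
Your proposal is correct and follows essentially the same route as the paper's proof: discretize to the David lattice, use the cancellation of $h$ over $n$-planes to produce a $\dist(x,L_Q)/\ell(Q)$ term, decompose the tail into dyadic annuli with smooth cutoffs, compare $\mu$ to the approximating flat measures of the ancestors (paying the plane-change cost in nearby $\alpha$'s), arrive at the pointwise bound $|h_r*\mu(x)|\lesssim \dist(x,L_Q)/\ell(Q)+\sum_{P\supset Q}\alpha(P)(\ell(Q)/\ell(P))^{\ve}$, and close with Cauchy--Schwarz, Fubini, and the Carleson packing of Theorem~\ref{teotol} and of the $\beta_2$-type coefficient via \cite[Lemmas 5.2 and 5.4]{Tolsa-lms}. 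The only cosmetic difference is that you subtract the flat measure $c_Q\HH^n_{|L_Q}$ before projecting, whereas the paper first moves $x$ to $L_Q$ and then subtracts, but these are interchangeable; and your parenthetical ``$\lesssim 1$ choices at each generation'' in the final Fubini step is slightly imprecise (for a fixed ancestor $P$ there are about $2^{kn}$ descendants $Q$ with $Q^{(k)}=P$, and it is the factor $\mu(Q)(\ell(Q)/\ell(P))^{\ve}$ that makes the resummation converge to $\lesssim \mu(P)$), though the resulting estimate $\sum_{Q\subset P}(\ell(Q)/\ell(P))^{\ve}\mu(Q)\lesssim\mu(P)$ is exactly what the paper uses.
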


\vvv

\begin{proof}
It is immediate to check that the estimate \rf{eqsq11} holds if and only if for all $R_0\in\DD$
\begin{equation}\label{eqsq14}
\sum_{Q\in\DD:Q\subset R_0} \int_Q \int_{\ell(Q)}^{2\ell(Q)} |h_r*\mu(x)|^2\,\frac{dr}{\ell(Q)}\,d\mu(x)
\leq c\,\mu(R_0).
\end{equation}

Let $x\in \frac12 B_Q$ and $\ell(Q)\leq r\leq 2\ell(Q)$. If $x\in \frac12 B_Q \cap L_Q$ (recall that $L_Q$ is the $n$-plane 
minimizing $\alpha(Q)$), we have
$$\int h_r(y-x)\,d\HH^n_{|L_Q}(y)=0.$$ 
Hence
\begin{align*}
\left|\int h_r(y-x)\,d\mu(y)\right| & = \left|\int h_r(y-x)\,d (\mu - c_Q\HH^n_{|L_Q})(y)\right| \\
&= \left|\int \sum_{k \geq 0} \wt\chi_k (y)\, h_r(y-x)\,d (\mu - c_Q\HH^n_{|L_Q})(y)\right|\\
&\leq \sum_{k \geq 0} \left| \int \wt\chi_k(y)\,h_r(y-x) \, d (\mu - c_Q\HH^n_{|L_Q})(y)\right|,
\end{align*}
where $\wt\chi_k$, $k\geq0$, are bump smooth functions such that 
\begin{itemize}
\item $\sum_{k\geq 0} \wt\chi_k=1$
\medskip
\item $\|\nabla \wt\chi_k\|_\infty \leq \ell(Q^k)^{-1}$, 
\medskip
\item $\chi_{A(x,2^{k}\,r,2^{k+1}\,r)} \leq \wt \chi_k \leq  \chi_{A(x,2^{k-1}\,r,2^{k+2}\,r)}$ for $k \geq 1$, and
\medskip
\item $\chi_{B(x,r)}\leq\wt\chi_0 \leq \chi_{B(x,2r)}.$
\end{itemize}
As usual $A(x,r_1,r_2)=\{y:r_1\leq|y-x|<r_2\}$. Moreover for $m \in \N$, $Q^{m}$ denotes the ancestor of $Q$ such that $\ell(Q^{m})=2^{m} \ell(Q)$.

Set $F_k(y)= h_r(x-y) \wt \chi_k(y)$, and notice that $\supp F_k \subset B_{Q^{k+2}}$. Then 
\begin{equation}
\label{hrest}
\begin{split}
\left|\int h_r(y-x)\,d\mu(y)\right| &\leq \sum_{k\geq 0} \left| \int F_k(y) d (\mu - c_{Q^{k+2}}\HH^n_{|L_{Q^{k+2}}})(y)\right| \\
&\quad\quad \quad +\sum_{k\geq 0} \left| \int F_k(y) d ( c_Q\HH^n_{|L_Q}-c_{Q^{k+2}}\HH^n_{|L_{Q^{k+2}}})(y)\right|\\
&\leq  \sum_{k \geq 0} \| \nabla F_k\|_\infty \,\a(Q^{k+2})\, \ell(Q^{k+2})^{n+1}\\
&\quad\quad \quad+ \sum_{k \geq 0} \| \nabla F_k\|_\infty\,\dist_{B_{Q^{k+2}}}(c_Q\HH^n_{|L_Q},c_{Q^{k+2}}\HH^n_{|L_{Q^{k+2}}})\\
&:=I_1+I_2
\end{split}
\end{equation}

For $y \in \supp F_k$ using \eqref{hprop} it follows easily that
$$|h_r(y-x)|\lesssim \frac1{\ell(Q)^n}\left(\frac{\ell(Q)}{\ell(Q^k)} \right)^{n+\ve} \;\text{ and }\quad| \nabla h_r(y-x)| \lesssim  \frac1{\ell(Q)^{n+1}}\left(\frac{\ell(Q)}{\ell(Q^k)} \right)^{n+1+\ve}.$$
Hence
\begin{equation}
\begin{split}
\label{fkbound}
\| \nabla F_k\|_\infty &\lesssim \frac1{\ell(Q^k)}\frac1{\ell(Q)^n}\left(\frac{\ell(Q)}{\ell(Q^k)} \right)^{n+\ve}+\frac1{\ell(Q)^{n+1}}\left(\frac{\ell(Q)}{\ell(Q^k)} \right)^{n+1+\ve}\lesssim \frac{\ell(Q)^\ve}{\ell(Q^k)^{n+1+\ve}}.
\end{split}
\end{equation}

We can now estimate $I_1$:
\begin{equation}
\label{i1sm}
\begin{split}
I_1 &\lesssim \sum_{k \geq 0} \a (Q^{k+2}) \ell(Q^k)^{n+1} \frac{\ell(Q)^\ve}{\ell(Q^k)^{n+1+\ve}}=\sum_{k \geq 0} \a (Q^{k+2}) \left( \frac{\ell(Q)}{\ell(Q^k)} \right)^\ve\\
&\lesssim \sum_{P \in \DD: R \supset Q } \a(P)\left( \frac{\ell(Q)}{\ell(P)} \right)^\ve.
\end{split}
\end{equation}

For $I_2$, using also \cite[Lemma 3.4]{Tolsa-lms}, we get
\begin{equation}
\label{i2sm}
\begin{split}
I_2 &\lesssim  \sum_{k \geq 0}  \frac{\ell(Q)^\ve}{\ell(Q^k)^{n+1+\ve}} \Biggl( \sum_{0 \leq j \leq k+2} \a (Q^j) \Biggr) \ell (Q^{k+2})^{n+1} \\
&\lesssim \sum_{k \geq 0} \Biggl( \frac{\ell(Q)}{\ell(Q^k)} \Biggr)^\ve \Biggl( \sum_{0 \leq j \leq k+2} \a (Q^j) \Biggr) \\
&\lesssim \sum_{R \in \DD: R \supset Q} \, \sum_{P \in \DD: Q \subset P \subset R} \a(P) \left( \frac{\ell(Q)}{\ell(R)} \right)^\ve \\
&= \sum_{P \in \DD : P \supset Q} \a (P) \sum_{R \in \DD: R \supset P} \left( \frac{\ell(Q)}{\ell(R)} \right)^\ve \\
&\approx \sum_{P \in \DD : P \supset Q} \a (P) \left( \frac{\ell(Q)}{\ell(P)} \right)^\ve
\end{split}
\end{equation}

Therefore by \eqref{hrest}, \eqref{i1sm} and \eqref{i2sm}, for $x\in \frac12 B_Q \cap L_Q$ and $\ell(Q) \leq r \leq 2 \ell(Q)$,
\begin{equation}
\label{hrestl}
\left|\int h_r(y-x)\,d\mu(y)\right| \lesssim \sum_{P \in \DD : P \supset Q} \a (P) \left( \frac{\ell(Q)}{\ell(P)} \right)^\ve.
\end{equation}

On the other hand, given an arbitrary $x\in Q$, let $x'$ be its orthogonal projection on $L_Q$ (notice 
that $x'\in\frac12 B_Q$). We have
\begin{equation}
\label{hrpw}
\begin{split}
\left|\int h_r(y-x)\,d\mu(y)\right| &\leq \left|\int h_r(y-x')\,d\mu(y)\right|+\int_{B_Q} |h_r(y-x)-h_r(y-x')|\,d\mu(y) \\
&\quad \quad \quad +\int_{\R^d \stm B_Q} |h_r(y-x)-h_r(y-x')|\,d\mu(y)\\
&:=I_3+I_4+I_5.
\end{split}
\end{equation}
For $\ell(Q) \leq r \leq 2 \ell(Q)$, by \eqref{hrestl},
\begin{equation}
\label{i3sm}
I_3 \lesssim \sum_{P \in \DD : P \supset Q} \a (P) \left( \frac{\ell(Q)}{\ell(P)} \right)^\ve.
\end{equation}

We can now estimate $I_4$ and $I_5$ using \eqref{hprop}. First
\begin{equation}
\label{i4sm}
I_4 \lesssim \int_{B_Q} \frac{|x-x'|}{ \ell(Q)^{n+1}}  d \mu (y) \lesssim \frac{\dist(x, L_Q)}{ \ell(Q)^{n+1}} \ell(Q)^n= \frac{\dist(x,L_Q)}{ \ell(Q)}.
\end{equation}
Moreover, noticing that if $y \notin B_Q$ and $\xi \in [y-x,y-x']$ we have that $|y-x| \approx |\xi|$,
\begin{equation}
\label{i5sm}
\begin{split}
I_5 &\lesssim \int \frac{|x-x'|}{ \ell(Q)^{n+1}} \sup_{ \xi \in [y-x,y-x']} |\nabla (h_r) ( {\xi})|d \mu (y) \\
&\lesssim \frac{|x-x'|}{ \ell(Q)^{n+1}} \int \frac{\ell(Q)^{n+1+\ve}}{(\ell(Q)+|y-x|)^{n+1+\ve}} d \mu (y) \\
&\lesssim \dist(x,L_Q) \ell(Q)^\ve \ell (Q)^{-1-\ve} =\frac{\dist(x,L_Q)}{\ell(Q)}.
\end{split}
\end{equation}
Hence by \eqref{hrpw}, \eqref{i3sm}, \eqref{i4sm} and \eqref{i5sm}, we get the following pointwise estimate for $x \in Q$ and $\ell(Q) \leq r \leq2\ell(Q)$:
\begin{equation}
|h_r*\mu(x)| \lesssim \frac{\dist(x,L_Q)}{\ell(Q)}+\sum_{P \in \DD : P \supset Q} \a (P) \left( \frac{\ell(Q)}{\ell(P)} \right)^\ve.
\end{equation}
Therefore,
\begin{equation*}
\begin{split}
&\sum_{Q \in \DD:Q \subset R_0} \int_Q \int_{\ell(Q)}^{2\ell(Q)} |h_r*\mu(x)|^2\,\frac{dr}{\ell(Q)} d \mu (x)\\
&\quad\lesssim \sum_{Q \in \DD:Q \subset R_0} \int_Q \int_{\ell(Q)}^{2\ell(Q)} \left(\frac{\dist(x,L_Q)}{\ell(Q)}\right)^2 \frac{dr}{\ell(Q)} d \mu (x)\\
&\quad\quad\quad+\sum_{Q \in \DD:Q \subset R_0} \int_Q \int_{\ell(Q)}^{2\ell(Q)} \left( \sum_{P \in \DD : P \supset Q} \a (P) \left( \frac{\ell(Q)}{\ell(P)} \right)^\ve \right)^2 \frac{dr}{\ell(Q)}d\mu(x)\\
&\quad \lesssim \sum_{Q \in \DD:Q \subset R_0} \int \left(\frac{\dist(x,L_Q)}{\ell(Q)}\right)^2 d \mu(x)+ \\
&\quad\quad\quad+ \sum_{Q \in \DD:Q \subset R_0} \left( \sum_{P \in \DD: P \supset Q} \a(P)^2 \left( \frac{\ell(Q)}{\ell(P)}\right)^{\ve}\right) \left( \sum_{P \in \DD: P \supset Q} \left( \frac{\ell(Q)}{\ell(P)}\right)^{\ve}\right) \mu (Q),
\end{split}
\end{equation*}
where we used Cauchy-Schwarz for the last inequality. By \cite[Lemmas 5.2 and 5.4]{Tolsa-lms},
\begin{equation*}
\sum_{Q\in\DD:Q\subset R_0} \int_Q \frac{\dist(x,L_Q)^2}{\ell(Q)^2}\,d\mu(x) \lesssim\mu(R_0).
\end{equation*}
Finally,
\begin{equation*}
\begin{split}
\sum_{Q \in \DD:Q \subset R_0}& \left( \sum_{P \in \DD: P \supset Q} \a(P)^2 \left( \frac{\ell(Q)}{\ell(P)}\right)^{\ve}\right) \left( \sum_{P \in \DD: P \supset Q} \left( \frac{\ell(Q)}{\ell(P)}\right)^{\ve}\right) \mu (Q)\\
&\lesssim \sum_{Q\in \DD: Q \subset R_0}\,\sum_{P \in \DD: Q \subset P \subset R_0} \a(P)^2 \left( \frac{\ell(Q)}{\ell(P)}\right)^{\ve} \mu(Q) \\
&\quad\quad+\sum_{Q\in \DD: Q \subset R_0}\,\sum_{P \in \DD: P \supset R_0} \a(P)^2 \left( \frac{\ell(Q)}{\ell(P)}\right)^{\ve} \mu(Q)\\
&\lesssim \sum_{P\in \DD: P \subset R_0} \a(P)^2 \sum_{Q \in \DD: Q \subset P}  \left( \frac{\ell(Q)}{\ell(P)}\right)^{\ve} \mu(Q) +\sum_{Q\in \DD: Q \subset R_0} \left( \frac{\ell(Q)}{\ell(R_0)}\right)^{\ve} \mu(Q) \\
&\lesssim \sum_{P\in \DD: P \subset R_0} \a(P)^2 \mu (P)+ \mu (R_0)\lesssim \mu(R_0),
\end{split}
\end{equation*}
where the last inequality follows from Theorem \ref{teotol}.
\end{proof}

The proof of Theorem \ref{main2} follows from Theorems \ref{main2od1}, \ref{main2od2} and Theorem \ref{teoq1}.


\section{Uniform rectifiabilty implies boundedness of square functions: the non-smooth case}\label{sec:urbdd}

By Corollary \ref{coro***} we already know that condition \rf{eqsq2} implies the uniform
$n$-rectifiability of $\mu$, assuming $\mu$ to be $n$-AD-regular. So to complete the proof of Theorem
\ref{main1} it remains to show that \rf{eqsq2} holds for any ball $B(x_0,R)$ centered at $\supp(\mu)$ if $\mu$ is uniformly $n$-rectifiable.
To this end, we would like to argue as in the preceding section, setting
$$\phi_r = \frac1{r^n}\,\chi_{B(0,r)}(x),\qquad x\in\R^d,$$
and
$$h_r = \phi_r - \phi_{2r}.$$
The main obstacle is the lack of smoothness of $h_r$.
To solve this problem we will decompose $h_r$ using wavelets as follows.

Consider a family of $C^1$ compactly supported orthonormal wavelets in $\R^n$. Tensor products of Daubechies compactly supported wavelets with $3$ vanishing moments will suffice for our purposes, see e.g. \cite[Section 7.2.3]{mal}. We denote this family of functions by
$\{\psi_I^\epsilon\}_{I\in\DD(\RR^n),1\leq\epsilon\leq 2^n-1}$, where $\DD(\R^n)$ is the standard
grid of dyadic cubes in  $\R^n$. Each $\psi_I^\epsilon$ is a $C^1$ function supported on $5I$, which satisfies
$\|\psi_I^\epsilon\|_2=1$, and moreover
$$\|\psi_I^\epsilon\|_\infty\lesssim \frac1{\ell(I)^{n/2}},\qquad \|\nabla \psi_I^\epsilon\|_\infty\lesssim \frac1{\ell(I)^{1+n/2}}
\qquad\text{for all $I\in\DD(\R^n)$ and $1\leq\epsilon\leq 2^n-1$,}$$
where $\ell(I)$ is the sidelength of the cube $I$. Recall that any function $f\in L^2(\R^n)$ can be written as
$$f= \sum_{I\in\DD(\R^n)}\langle f,\psi_I^\epsilon\rangle\,\psi_I^\epsilon.$$
To simplify notation and avoid using the $\epsilon$ index, we consider $2^n-1$ copies of $\DD(\R^n)$ and we denote by $\wt\DD(\R^n)$ their union. Then we can write 
$$f= \sum_{I\in\wt \DD(\R^n)}\langle f,\psi_I\rangle\,\psi_I,$$
with the sum converging in $L^2(\R^n)$.

In particular, we have 
\begin{equation}
\label{hti}
\wt h :=\chi_{B_n(0,1)} - \frac1{2^n}\,\chi_{B_n(0,2)} = \sum_{I\in\wt \DD(\R^n)} a_I\,\psi_I,
\end{equation}
where $B_n(0,r)$ stands for the ball centered at $0$ with radius $r$ in $\R^n$ and 
$$ a_I =  \Bigl\langle \chi_{B_n(0,1)} - \frac1{2^n}\,\chi_{B_n(0,2)},\,\psi_I\Bigr\rangle.$$
So we have
$$\frac1{r^n}\,\chi_{B_n(0,r)}(x) - \frac1{(2r)^n}\,\chi_{B_n(0,2r)}(x) = \sum_{I\in\wt \DD(\R^n)} a_I\,
\frac1{r^n} \psi_I\left(\frac xr\right).$$

Notice that we have been talking about wavelets in $\R^n$ although the ambient space of the measure $\mu$ and the function $h_r$ is $\R^d$, with $d\geq n$. We identify $\R^n$ with the ``horizontal'' subspace of $\R^d$ given by $\R^n\times\{0\}\times\ldots\times
\{0\}$ and
 we consider the following circular projection
$\Pi:\R^d\to\R^n$. For $x=(x_1,\ldots,x_d)\in\R^d$ we denote $x^H := (x_1,\ldots,x_n)$ and $x^V=(x_{n+1},\dots,x_d)$.
If $x^H\neq0$ we set
$$\Pi(x) = \frac{|x|}{|\xh|}\,\xh.$$
If $\xh=0$, we set $\Pi(x) = (|x|,0,\ldots,0)$, say. Observe that in any case $|x|=|\Pi(x)|$.

Notice also that 
$$h_r(x) = \frac1{r^n}\,\chi_{B_n(0,r)}(\Pi(x)) - \frac1{(2r)^n}\,\chi_{B_n(0,2r)}(\Pi(x)) = \sum_{I\in\wt \DD(\R^n)} a_I\,
\frac1{r^n} \psi_I\left(\frac{\Pi( x)}r\right).$$
Thus,
\begin{equation}
\label{hrmx}
h_r*\mu(x) = \sum_{I\in\wt \DD(\R^n)} a_I\,
\frac1{r^n} \psi_I\left(\frac{\Pi( \cdot)}r\right)*\mu(x).
\end{equation}

Observe that the functions $\psi_I$ are smooth, and so one can guess that the $\alpha$ coefficients of
\cite{Tolsa-lms} will be useful to estimate $\psi_I\left(\frac{\Pi( \cdot)}r\right)*\mu(x)$.
Concerning the coefficients $a_I$ we have:

\begin{lemma}
\label{boundaries}
For $I \in \wt \DD(\Rn)$, we have:
\begin{itemize}
\item[(a)] If $5I\cap \bigl(\partial B_n(0,1)\cup \partial B_n(0,2)\bigr)=\varnothing$, then $a_I=0$.
\medskip
\item[(b)] If $\ell(I)\gtrsim 1$, then $|a_I|\lesssim \ell(I)^{-1-n/2}$.
\medskip
\item[(c)] If $\ell(I) \lesssim 1$, then $|a_I|\lesssim \ell(I)^{n/2}$.
\end{itemize}
\end{lemma}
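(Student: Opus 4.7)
The three bounds all reduce to elementary estimates using the wavelet normalizations $\|\psi_I\|_2=1$, $\|\psi_I\|_\infty\lesssim\ell(I)^{-n/2}$, $\|\nabla\psi_I\|_\infty\lesssim\ell(I)^{-1-n/2}$, and the vanishing moment $\int\psi_I=0$, together with one key observation: the function $\wt h$ itself has mean zero on $\R^n$, since
$$\int\wt h\,dx=|B_n(0,1)|-2^{-n}|B_n(0,2)|=0.$$

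For (a), if $5I$ meets neither sphere, then $\wt h$ is a constant $c$ on $5I=\supp\psi_I$, and the vanishing moment of $\psi_I$ gives $a_I=c\int\psi_I=0$. For (c), I would apply Cauchy--Schwarz and the size of $\supp\psi_I$:
$$|a_I|\leq\|\wt h\|_\infty\,\|\psi_I\|_1\leq\|\wt h\|_\infty\,\|\psi_I\|_2\,|5I|^{1/2}\lesssim\ell(I)^{n/2}.$$

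Part (b) is the main content. I would split into two sub-cases. When $5I\supset B_n(0,2)$, I would use mean zero of $\wt h$ to subtract a constant:
$$a_I=\int_{B_n(0,2)}\wt h(x)\bigl[\psi_I(x)-\psi_I(0)\bigr]\,dx,$$
and then apply $|\psi_I(x)-\psi_I(0)|\leq 2\|\nabla\psi_I\|_\infty$ on $B_n(0,2)$ to conclude $|a_I|\lesssim\|\nabla\psi_I\|_\infty\lesssim\ell(I)^{-1-n/2}$. When $5I$ overlaps $B_n(0,2)$ without containing it, mean zero of $\wt h$ is no longer directly usable (since $\wt h$ need not integrate to zero over $5I$). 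Here I would locate a point $y_0\in\partial(5I)\cap B_n(0,2)$, which exists by connectivity of $B_n(0,2)$ because it is partially but not entirely covered by $5I$; at such $y_0$ the continuous function $\psi_I$ vanishes because $\supp\psi_I\subset 5I$. The global Lipschitz bound then gives, for $x\in B_n(0,2)$,
$$|\psi_I(x)|=|\psi_I(x)-\psi_I(y_0)|\leq\|\nabla\psi_I\|_\infty|x-y_0|\leq 4\|\nabla\psi_I\|_\infty,$$
and integrating over $5I\cap B_n(0,2)$, whose measure is bounded by an absolute constant, yields $|a_I|\lesssim\ell(I)^{-1-n/2}$.

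The main (mild) obstacle is this second sub-case of (b): one cannot exploit the mean-zero property of $\wt h$ as in the first sub-case, and the pointwise bound $\|\psi_I\|_\infty\lesssim\ell(I)^{-n/2}$ is a factor of $\ell(I)$ too weak. The right substitute is the topological observation that some point of $\partial(5I)$ must lie inside $B_n(0,2)$, paired with the vanishing of $\psi_I$ at the boundary of its support.
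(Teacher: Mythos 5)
Your proposal is correct, and parts (a) and (c) coincide with the paper's proof. For part (b), however, your case split is unnecessary and stems from a small misconception. The identity
\[
a_I=\int \wt h(x)\bigl(\psi_I(x)-\psi_I(0)\bigr)\,dx
\]
follows from $\int_{\R^n}\wt h\,dx=0$ alone; one never needs $\int_{5I}\wt h\,dx=0$, because the constant $\psi_I(0)$ is subtracted against the \emph{global} integral of $\wt h$, and the resulting integration is automatically confined to $\supp\wt h=\bar B_n(0,2)$. Having done this, the global Lipschitz bound $|\psi_I(x)-\psi_I(0)|\leq 2\|\nabla\psi_I\|_\infty$ holds for every $x\in B_n(0,2)$, regardless of whether $0\in 5I$ or whether $5I\supset B_n(0,2)$ (in particular, if $0\notin 5I$ then $\psi_I(0)=0$ and the inequality still holds). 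So the paper's single computation already covers both of your sub-cases; your worry that ``mean zero of $\wt h$ is no longer directly usable'' in the partial-overlap case is unfounded. Your second sub-case argument (locating $y_0\in\partial(5I)\cap B_n(0,2)$ with $\psi_I(y_0)=0$) is correct and gives an alternative route that avoids the mean-zero property entirely, but it is redundant: the uniform one-line argument is both simpler and handles all cases.
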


\begin{proof}
The first statement follows from the fact that the wavelets $\psi_I$ have zero mean in $\R^n$ and that 
$\wt h= \chi_{B_n(0,1)} - \frac1{2^n}\,\chi_{B_n(0,2)}$ is constant on $\supp\psi_I$ if 
$5I\cap \bigl(\partial B_n(0,1)\cup \partial B_n(0,2)\bigr)=\varnothing$.

The statement (c) is immediate:
$$|a_I| = \left|\int_{R^n} \wt h\,\psi_I\,dx
\right| \leq \|\psi_I\|_1\lesssim \ell(I)^{n/2}\,\|\psi_I\|_2 = \ell(I)^{n/2}.$$

Finally (b) follows from the smoothness of $\psi_I$ and the fact that $\wt h$ has zero mean. Indeed,
\begin{align*}
|a_I| = \left|\int_{B_n(0,2)} \wt h(x)\,(\psi_I(x)-\psi_I(0))\,dx
\right|
\leq 2\|\nabla\psi_I\|_\infty\,\int |\wt h|\,dx \lesssim \frac1{\ell(I)^{1+n/2}}.
\end{align*}
\end{proof}

\vvv

By estimating $\psi_I\left(\frac{\Pi( \cdot)}r\right)*\mu(x)$ in terms of the $\alpha(Q)$'s, using some
arguments in the spirit of the ones in \cite{Mas-Tolsa}, below we will prove the following.

\begin{theorem}\label{teoq1'}
Let $\mu$ be an $n$-AD-regular measure in $\R^d$.
If $\mu$ is uniformly $n$-rectifiable, then there exists a constant $c$ such that
\begin{equation}\label{eqsq11ns}
\int_0^R \int_{x\in B(x_0,R)}|h_r *\mu (x)|^2\,d\mu(x)\,\frac{dr}r \leq c\, R^n,
\end{equation}
for all $x_0\in\supp(\mu)$, $R>0$.
\end{theorem}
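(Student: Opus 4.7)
The plan is to reduce \eqref{eqsq11ns} to the Carleson packing
$$\sum_{Q\in\DD,\,Q\subset R_0}\int_Q\int_{\ell(Q)}^{2\ell(Q)}|h_r*\mu(x)|^2\,\frac{dr}{\ell(Q)}\,d\mu(x)\lesssim\mu(R_0),$$
and, for each $Q$, $x\in Q$ and $r\in[\ell(Q),2\ell(Q)]$, to split
$$h_r*\mu(x)\,=\,c_Q\,h_r*\HH^n|_{L_Q}(x)\,+\,h_r*(\mu-c_Q\HH^n|_{L_Q})(x)$$
into a \emph{planar residual} and an \emph{approximation error}, treating each separately.

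For the planar residual, the key observation is that $h_r$ is radial in $\R^d$, so by $O(d)$-equivariance $h_r*\HH^n|_L(x)$ depends only on $t:=\dist(x,L)$. The identity $\int_0^r s^{n-1}r^{-n}\,ds=\int_0^{2r}s^{n-1}(2r)^{-n}\,ds=1/n$ shows it vanishes at $t=0$, and a direct Taylor expansion yields $|h_r*\HH^n|_L(x)|^2\lesssim(t/r)^2$ uniformly in $t\in[0,2r]$ (with value $0$ for $t>2r$). Summed over $Q$, this piece contributes at most
$$\sum_Q\int_Q\frac{\dist(x,L_Q)^2}{\ell(Q)^2}\,d\mu(x)\,\lesssim\,\mu(R_0),$$
the standard estimate for uniformly rectifiable measures (cf.\ \cite[Lemmas~5.2 and 5.4]{Tolsa-lms}), which was already invoked in Section~\ref{sec:urbddsm}.

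For the approximation error, I apply the wavelet expansion \eqref{hrmx}:
$$h_r*(\mu-c_Q\HH^n|_{L_Q})(x)=\sum_{I\in\wt\DD(\R^n)}a_I\,\bigl[G_{I,r}*(\mu-c_Q\HH^n|_{L_Q})\bigr](x),\quad G_{I,r}(y)=r^{-n}\psi_I(\Pi(y)/r).$$
On the open set where $G_{I,r}$ is Lipschitz, a telescoping $\alpha$-coefficient argument in the spirit of Section~\ref{sec:urbddsm} yields
$$\bigl|G_{I,r}*(\mu-c_Q\HH^n|_{L_Q})(x)\bigr|\lesssim\|\nabla G_{I,r}\|_\infty\!\!\sum_{P:\,Q\subset P\subset P_I}\!\!\alpha(P)\,\ell(P)^{n+1}\bigl(\ell(Q)/\ell(P)\bigr)^{\ve},$$
where $P_I$ is a dyadic cube of side $\ell(P_I)\approx\max(1,\ell(I))\,r$ such that $B_{P_I}\supset\supp G_{I,r}(\cdot-x)$.

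The main obstacle is combining these bounds over $I$. The estimates $|a_I|\lesssim\ell(I)^{n/2}$ and $\|\nabla G_{I,r}\|_\infty\lesssim r^{-n-1}\ell(I)^{-1-n/2}$ from Lemma~\ref{boundaries}, together with the counting $\#\{I:\ell(I)=2^{-k},\,a_I\neq 0\}\lesssim 2^{k(n-1)}$, produce a divergent naive triangle-inequality bound at small scales; the fix is to use $\ell(P_I)\approx\ell(I)r$ (rather than the Cube-$Q$ scale $r$) in the telescope, gaining an extra factor $\ell(I)^{n+1}$, and then to apply Cauchy--Schwarz jointly in $I$ and in the telescope cubes $P$, exploiting the $L^2$-orthonormality $\sum_I a_I^2=\|\wt h\|_2^2<\infty$. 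A secondary difficulty is that $G_{I,r}$ has a jump discontinuity across the singular set $\{y^H=0\}$ of $\Pi$ when $5I$ meets the positive $x_1$-axis of $\R^n$; such wavelets are handled by a direct $L^\infty$ bound using $\|\psi_I\|_\infty\lesssim\ell(I)^{-n/2}$ and the AD-regularity of $\mu$ (their count at each scale is even smaller, since the constraint that $5I$ simultaneously meets a boundary sphere and the axis has codimension two). The final estimate follows by invoking the Carleson packing $\sum_P\alpha(P)^2\mu(P)\lesssim\mu(R_0)$ from Theorem~\ref{teotol}, combined as in Section~\ref{sec:urbddsm}.
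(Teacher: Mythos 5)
Your planar-vs-error split and the Taylor bound $|h_r*\HH^n|_{L_Q}(x)|\lesssim\dist(x,L_Q)/r$ are correct and simplify matters relative to the paper, which instead lets the planar contribution vanish term-by-term via the zero mean of each wavelet (this is the term $A_3$ in the paper's Lemmas \ref{lemfrx0} and \ref{lemgr100}). The treatment of the large wavelets ($\ell(I)\gtrsim 1$) is essentially the paper's estimate of $F_r$. But for the small wavelets there is a genuine gap, and it is precisely the point your ``fix'' papers over. You write that the estimate should be run on a cube $P_I$ with $\ell(P_I)\approx\ell(I)\,r$ \emph{and} with $B_{P_I}\supset\supp G_{I,r}(\cdot-x)$. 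For $\ell(I)<1$ these two requirements are incompatible: the support of $\psi_I(\Pi(\cdot-x)/r)$ is $x+\Pi^{-1}(r\cdot 5I)$, which is a thin shell at distance $\approx r$ from $x$ of full diameter $\approx r$, not $\approx r\ell(I)$. So a Lipschitz cutoff adapted to that support cannot live on a ball of radius $\approx r\ell(I)$, and the telescoping $\alpha$-bound at the scale $r\ell(I)$ simply does not apply. Without the factor $\ell(P_I)^n\approx(\ell(I)r)^n$ coming from a genuinely small ball, your sum over $I$ at scale $2^{-k}$ (of cardinality $\approx 2^{k(n-1)}$) diverges, and the Cauchy--Schwarz/orthonormality remark does not rescue it: $\sum_{\ell(I)=2^{-k}}a_I^2\approx 2^{-k}$ is finite, but this is not the obstruction.

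The missing idea is the paper's Lemma \ref{key}: under a local flatness hypothesis (the stopping condition $\sum_{S:\,P\subset S\subset 1000Q}\alpha(100S)\le\delta$), the set $\supp(\mu)\cap\bigl(x+\Pi^{-1}(r\cdot 5I)\bigr)$ is actually contained in $3P(I)$ for a single dyadic cube $P(I)$ of scale $\approx r\ell(I)$. Only then does the $\alpha$-telescope at scale $r\ell(I)$ make sense, and even then the comparison of the spherical projection $\Pi_\sharp\HH^n|_{\wt L_P}$ with $\HH^n|_{L_0}$ needs the Jacobian estimates of Lemma \ref{disttilds}. The flatness hypothesis in turn forces a stopping-time decomposition (the families $\cG_0$, $\cF_0$, the smoothed functions $\ell(\cdot)$, $d(\cdot)$), with a separate crude estimate for the stopped term $G_{r,2}$ exploiting that the relevant $\alpha$-sum is already $\gtrsim\delta$ there. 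Your proposal has none of this apparatus and hence cannot close. As a minor point, the ``codimension two'' heuristic for wavelets near the singular axis of $\Pi$ is not how the paper handles that issue either: since $L_Q$ is taken parallel to $\R^n$ and $\supp\mu$ is $\alpha$-close to $L_Q$, points $y\in\supp\mu$ with $|y-x|\approx r$ have $|(y-x)^H|\approx r$ and are automatically bounded away from $\{y^H=0\}$; it is flatness, not counting, that removes the singularity.
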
 


\subsection{Preliminaries for the proof of Theorem \ref{teoq1'}}

 It is immediate to check that the estimate \rf{eqsq11ns} holds if and only if  for all $R\in\DD$
\begin{equation}\label{eqsqns}
\sum_{Q\in\DD:Q\subset R} \int_Q \int_{\ell(Q)}^{2\ell(Q)} |h_r*\mu(x)|^2\,\frac{dr}{\ell(Q)}\,d\mu(x)
\leq c\,\mu(R).
\end{equation}

Let $\de >0$  be some small constant to be fixed below. 
To estimate the preceding integral we can assume that $ \alpha (1000 Q) \leq \de^2$. Otherwise we have 
$$|h_r \ast \mu (x)| \lesssim 1 \leq \frac{\a (1000 Q)}{\de^2}$$ and, by Theorem \ref{teotol},
\begin{equation}\label{eqaasdq1}
\begin{split}
&\sum_{\substack{Q \in \DD(R) \\
\a(1000 Q) \geq\delta^2}} \frac{1}{\ell(Q)} \int_Q \int_{\ell(Q)}^{2 \ell(Q)}|h_r \ast \mu (x)|^2 dr d \mu (x)\\ 
&\lesssim \frac{1}{\de^4} \sum_{Q \in \DD(R)} \a(1000 Q)^2 \mu(Q) \lesssim \frac1{\delta^4} \mu (R).
\end{split}
\end{equation}

 Since the functions $h_r$ are even, we have
$$h_r \ast \mu (x)= \int h_r(y-x) d \mu (y).$$
Recalling \eqref{hrmx}, we get
$$h_r \ast \mu (x)=\frac1{r^n} \sum_{I\in\wt \DD(\R^n)} a_I\,
\int \psi_I\left(\frac{\Pi( y-x)}r\right)d\mu(y).$$
By Lemma \ref{boundaries}, $a_I=0$ whenever $5I\cap \bigl(\partial B_n(0,1)\cup \partial B_n(0,2)\bigr)=\varnothing$.
Therefore the preceding sum ranges over those $I$ such that $5I\cap \bigl(\partial B_n(0,1)\cup \partial B_n(0,2)\bigr)\neq \varnothing$ and the domain of integration of each $\psi_I\left(\frac{\Pi( \cdot)}r\right)$ is $\Pi^{-1}(r\cdot5I)$.

Notice that $5I$ stands for the cube from $\R^n$ concentric with $I$ with side length equal to $5\ell(I)$.
On the other hand, given a set $A\subset \R^d$, we write
$$r\cdot A= \{r\cdot x\in \R^d:\,x\in A\}.$$
So $r\cdot 5I = r\cdot (5I)$ is a cube in $\R^d$ with side length $5r\ell(I)$ which is not concentric with $I$ unless $I$ is centered at the origin.

We set
\begin{equation}
\label{hrab}
\begin{split}h_r \ast \mu (x)&=\frac1{r^n} \sum_{I \in\wt \DD(\R^n): \ell(I) \geq 1/100} a_I\,
\int \psi_I\left(\frac{\Pi( y-x)}r\right)d\mu(y)\\
&\quad \quad+\frac1{r^n} \sum_{I \in \wt \DD(\R^n): \ell(I) < 1/100} a_I\,
\int \psi_I\left(\frac{\Pi( y-x)}r\right)d\mu(y) \\
&=:F_r(x)+ G_r(x),
\end{split}
\end{equation}
so that
\begin{equation}
\label{i1i2}
\begin{split}
\sum_{Q \in \DD : Q \subset R} & \int_Q \int_{\ell(Q)}^{2\ell(Q)} |h_r*\mu(x)|^2\,\frac{dr}{\ell(Q)}\,d\mu(x)\\
&\lesssim 
\sum_{\substack{Q \in \DD : Q \subset R, \\
\a(1000 Q) \geq\delta^2}} \int_Q \int_{\ell(Q)}^{2\ell(Q)} |h_r \ast \mu (x)|^2 \,\frac{dr}{\ell(Q)}\,d\mu(x) \\
&\quad \quad\quad +
\sum_{\substack{Q \in \DD : Q \subset R, \\ \alpha (1000 Q) \leq \de^2}} \int_Q \int_{\ell(Q)}^{2\ell(Q)} |F_r(x)|^2\,\frac{dr}{\ell(Q)}\,d\mu(x) \\
&\quad \quad\quad +\sum_{\substack{Q \in \DD : Q \subset R, \\ \alpha (1000 Q) \leq \de^2}} \int_Q \int_{\ell(Q)}^{2\ell(Q)} |G_r(x)|^2\,\frac{dr}{\ell(Q)}\,d\mu(x)\\
&=:I_0 + I_1+I_2.
\end{split}
\end{equation}
As shown in \rf{eqaasdq1}, we have
$$I_0\lesssim \frac1{\delta^4}\,\mu(R).$$
Thus to prove Theorem \ref{teoq1'} it is enough to show that $I_1+I_2\leq c(\delta)\,\mu(R)$.



\subsection{Estimate of the term $I_1$ in \rf{i1i2}}

We first need to estimate $F_r(x)$. To this end, we take  $Q \in \DD$ and $r>0$  such that $x \in Q$ and $\ell(Q) \leq r < 2 \ell(Q)$. 
We also assume that $L_Q$ (the best approximating plane for $\a(Q)$) is parallel to $\R^n$. 

Let $I \in \wt \DD(\R^n)$ be such that $\ell(I) \geq 1/100$ and $5I\cap \bigl(\partial B_n(0,1)\cup \partial B_n(0,2)\bigr)\neq \varnothing$. Let $P:=P(I) \in \DD$ be some cube containing $Q$ such that
$\ell(P) \approx r \ell(I) \approx \ell(Q) \ell (I)$. Let also $\phi_P$ be a smooth bump function such that $\chi_{3P} \leq \phi_P \leq \chi_{B_P}$, $\| \nabla \phi_P\|_ \infty\leq1$,  and $\phi_P=1$ on $x+\Pi^{-1}(r\cdot5I)$.
Then
$$\int \psi_I\left(\frac{\Pi( y-x)}r\right)d\mu(y)=\int_{3P} \psi_I\left(\frac{\Pi( y-x)}r\right)d\mu(y)= \int \phi_P (y) \,\psi_I\left(\frac{\Pi( y-x)}r\right)d\mu(y).$$

\begin{lemma}\label{lemfrx0}
Let $I \in \wt \DD(\R^n)$ be such that $\ell(I) \geq 1/100$ and $5I\cap \bigl(\partial B_n(0,1)\cup \partial B_n(0,2)\bigr)\neq \varnothing$ and let $P=P(I)$ as above. We have
\begin{equation}
\label{integestim}
\left|\int \psi_I\left(\frac{\Pi( y-x)}r\right)d\mu(y)\right|\lesssim   \left(\frac  {\ell(Q)}{\ell(P)}\right)^{n/2} \left( \frac{\dist(x,L_Q)}{\ell(P)}+\sum _{S\in\DD:Q \subset S \subset P} \a(2S) \right) \ell(P)^n.
\end{equation}
\end{lemma}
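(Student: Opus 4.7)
Set $F(y) := \phi_P(y)\,\psi_I(\Pi(y-x)/r)$, so that the integral in question equals $\int F\,d\mu$ (since $\phi_P\equiv 1$ on the $y$-support of $\psi_I(\Pi(\cdot-x)/r)$). The strategy is to compare $\mu$ with the flat measure $c_Q\HH^n|_{L_Q}$ inside $B_P$, using the $\alpha$-coefficients both at scale $P$ and along a telescoping chain from $Q$ up to $P$; then the vanishing mean of $\psi_I$ together with $L_Q\parallel\R^n$ extracts the desired $\dist(x,L_Q)$ factor.

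From the wavelet bounds $\|\psi_I\|_\infty\lesssim\ell(I)^{-n/2}$ and $\|\nabla\psi_I\|_\infty\lesssim\ell(I)^{-1-n/2}$, together with $\ell(I)\approx\ell(P)/\ell(Q)$, I would first establish
\begin{equation*}
\|F\|_\infty\lesssim\bigl(\ell(Q)/\ell(P)\bigr)^{n/2},\qquad \lip(F)\lesssim\ell(Q)^{n/2}/\ell(P)^{1+n/2}.
\end{equation*}
The Lipschitz bound uses that on the support of $\psi_I(\Pi(\cdot-x)/r)$ the Jacobian of $\Pi$ is controlled — typically the support of $\psi_I$ lies in an annulus in $\R^n$ bounded away from $0$, which forces $(y-x)$ to have horizontal component bounded below, making $\Pi$ smooth there.

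Next decompose
\begin{equation*}
\int F\,d\mu = \int F\,d(\mu - c_P\HH^n|_{L_P}) + \int F\,d(c_P\HH^n|_{L_P} - c_Q\HH^n|_{L_Q}) + c_Q\!\int F\,d\HH^n|_{L_Q}.
\end{equation*}
Since $\supp F\subset B_P$, the first summand is $\lesssim\lip(F)\,\alpha(P)\,\ell(P)^{n+1}$ by the very definition of $\alpha(P)$, and telescoping along the ancestors of $Q$ up to $P$ (as in Lemma~3.4 of \cite{Tolsa-lms}) bounds the second summand by $\lip(F)\,\ell(P)^{n+1}\sum_{Q\subset S\subset P}\alpha(2S)$. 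Inserting the Lipschitz estimate produces exactly the $\alpha$-contribution on the right-hand side of \eqref{integestim}. For the flat term, let $x_0$ be the orthogonal projection of $x$ onto $L_Q$ and set $\delta_0 := \dist(x,L_Q)/r$; parametrizing $L_Q$ by $u = (y-x_0)/r\in\R^n$, one computes $\Pi(y-x)/r = \sqrt{1+\delta_0^2/|u|^2}\,u = u + O(\delta_0^2)$ on $\supp\psi_I$ (where $|u|\sim 1$). When $\delta_0 = 0$ the integral vanishes, since $\phi_P(x_0 + ru) = 1$ on $\supp\psi_I$ and $\int\psi_I = 0$; for general $\delta_0$ a Taylor expansion shows the correction is $O(r^n\,\delta_0^2\,\ell(I)^{n/2-1})$ — both the perturbation in the argument of $\psi_I$ and the measure of the set where $\phi_P$ departs from $1$ are $O(\delta_0^2)$ — and the trivial bound $\dist(x,L_Q)\lesssim\ell(Q)$ absorbs one factor of $\delta_0$ to yield the desired $(\ell(Q)/\ell(P))^{n/2}\cdot\dist(x,L_Q)/\ell(P)\cdot\ell(P)^n$.

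The main obstacle will be the Lipschitz bound in the first step, due to the non-smoothness of the circular projection $\Pi$ along the vertical axis. For small $\ell(I)$ (close to the threshold $1/100$) this causes no trouble since $5I$ is then a small set inside an annulus away from $0\in\R^n$; but for larger $I$ whose support $5I$ meets the origin one must argue more carefully, perhaps by isolating the (small) near-vertical portion of $\supp F$ and controlling it separately via the $AD$-regularity of $\mu$.
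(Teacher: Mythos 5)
Your outline mirrors the paper's skeleton --- compare $\mu$ to flat measures via the $\alpha$-coefficients at scale $P$ and a telescoping chain from $Q$ to $P$, then use the vanishing mean of $\psi_I$ on the flat part --- and that portion of the plan is sound. The gap is exactly the Lipschitz estimate you flag, and it is not a side issue to be patched at the end. To bound $|\int F\,d(\mu-c_P\HH^n|_{L_P})|$ by $\lip(F)\,\alpha(P)\,\ell(P)^{n+1}$ you need a genuine, scale-appropriate Lipschitz constant for $F=\phi_P\,\psi_I(\Pi(\cdot-x)/r)$, but $\nabla\Pi$ is unbounded near the vertical axis $\{y^H=0\}$. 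When $\ell(I)$ is close to the threshold $1/100$ the cube $5I$ does stay in an annulus away from the origin, but the lemma covers every $\ell(I)\geq 1/100$; once $5\ell(I)\gtrsim1$ the cube $5I$ can contain $0$, and then $\Pi^{-1}(r\cdot5I)$ accumulates on the vertical axis, so your claimed bound $\lip(F)\lesssim\ell(Q)^{n/2}/\ell(P)^{1+n/2}$ is simply false. The proposed fallback --- split off the near-vertical region and control it by AD-regularity --- does not fit the $\alpha$-machinery: $\alpha(P)$ lets you test $\mu-c_P\HH^n|_{L_P}$ only against Lipschitz functions, and a bounded non-Lipschitz bump has no place in that estimate. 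The same singularity of $\Pi$ also invalidates your Taylor expansion for the flat term once $5I$ gets near $0$.

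The paper avoids $\nabla\Pi$ entirely by inserting the orthogonal projection $\Pi^\perp$ onto $L_0$ (the translate of $L_Q$ through $x$) as an intermediary:
\begin{equation*}
\int\phi_P\,\psi_I\Bigl(\tfrac{\Pi(\cdot)}{r}\Bigr)d\mu
=\int\phi_P\Bigl[\psi_I\Bigl(\tfrac{\Pi(\cdot)}{r}\Bigr)-\psi_I\Bigl(\tfrac{\Pi^\perp(\cdot)}{r}\Bigr)\Bigr]d\mu
+\int\phi_P\,\psi_I\Bigl(\tfrac{\Pi^\perp(\cdot)}{r}\Bigr)d\bigl(\mu-c_P\HH^n|_{L_0}\bigr)
+c_P\!\int_{L_0}\!\phi_P\,\psi_I\Bigl(\tfrac{\Pi^\perp(\cdot)}{r}\Bigr)d\HH^n.
\end{equation*}
The last integral is exactly zero because $\Pi^\perp|_{L_0}$ is the identity and $\int\psi_I=0$. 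The middle term is the one tested against Lipschitz functions, and its Lipschitz constant is painless because $\Pi^\perp$ is a linear contraction with $\|\nabla\Pi^\perp\|\leq1$. The first term requires no control on $\nabla\Pi$ at all: it is handled by the elementary pointwise inequality $|\Pi(y)-\Pi^\perp(y)|\lesssim\dist(y,L_0)$ together with $\int_{B_P}\dist(y,L_P)\,d\mu(y)\lesssim\alpha(2P)\ell(P)^{n+1}$ and the telescoping bounds on $\dist(y,L_0)$. This linearization of $\Pi$ through $\Pi^\perp$ is the key idea your proposal is missing.
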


\begin{proof}
Without loss of generality we assume that $x=0$.
Let $L_0$ be the plane parallel to $L_Q$ passing through $0$ (that is, $L_0=\R^n$) and denote by
$\Pi^\perp$ the orthogonal projection onto $L_0$.
  Then
\begin{equation}
\label{splitest1}
\begin{split}
\int \psi_I&\left(\frac{\Pi( y)}r\right)d\mu(y)=\int \phi_P (y) \psi_I\left(\frac{\Pi( y)}r\right)d\mu(y)\\
& =\int \phi_P (y)\ \left( \psi_I\left(\frac{\Pi( y)}r\right)-\psi_I\left(\frac{\Pi^\perp( y)}r\right)\right)d\mu(y)\\
&\quad \quad +\int \phi_P (y)\ \psi_I\left(\frac{\Pi^\perp( y)}r \right)d(\mu-c_P \ha^n_{|L_0})(y)\\
&\quad \quad \quad \quad+c_P \int_{L_0} \phi_P (y)\ \psi_I\left(\frac{\Pi^\perp( y)}r \right) d \ha^n (y) \\
&=:A_1+A_2+A_3.
\end{split}
\end{equation}
Since $\psi_I\left(\frac{\Pi^\perp( y)}r \right)= \psi_I \left(\frac yr\right)$ for $y \in L_0$, and $\phi_P=1$ on $r\cdot5I$, we get
\begin{equation}
\label{a3zero}
A_3=c_P\int_{5I} \psi_I \left(\frac{y}{r}\right)d y=0.
\end{equation}

We now proceed to estimate $A_2$:
\begin{equation}
\label{esta2}
\begin{split}
|A_2| &\leq \left|\int \phi_P (y)\ \psi_I\left(\frac{\Pi^\perp( y)}r \right)d(\mu-c_P \ha^n _{|L_P})(y) \right|
\\&\quad\quad+\left|c_P\int \phi_P (y)\ \psi_I\left(\frac{\Pi^\perp( y)}r \right)d(\ha^n_{|L_P}-\ha^n _{|L_0})(y) \right|\\
&\lesssim \left\| \nabla \left( \phi_P \ \psi_I\left(\frac{\Pi^\perp( \cdot)}r \right) \right)\right\|_\infty \, \bigl(\a(P) \, \ell(P)^{n+1}+ \dist_H (L_P \cap B_P, L_0 \cap B_P) \ell(P)^{n}\bigr),
\end{split}
\end{equation}
from the definition of the $\a$ numbers and the fact that $c_P \approx 1$. Using the gradient bounds for the functions $\phi_P$ and $\psi_I$, and the fact that $\ell(P) \approx r \ell(I) \approx \ell(Q) \ell (I),$ we get
\begin{equation}
\label{grada2}
\begin{split}
&\left\| \nabla \left( \phi_P \ \psi_I\left(\frac{\Pi^\perp( \cdot)}r \right) \right)\right\|_\infty \lesssim 
\|\nabla \phi_P\|_\infty \|\psi_I\|_\infty+\left\|\nabla \left(\psi_I \left(\frac{\Pi^\perp( \cdot)}r \right)\right)\right\|_\infty \\
& \quad \quad\lesssim \frac 1 {\ell(P)} \frac 1 {\ell(I)^{n/2}}+\frac 1 {\ell(I)^{n/2+1}}\frac 1r \approx \frac1 {\ell(Q)} \frac 1 {\ell(I)^{n/2+1}} \approx \frac1 {\ell(Q)} \left(\frac  {\ell(Q)}{\ell(P)}\right)^{n/2+1}.
\end{split}
\end{equation}
We also remark that in the previous estimate we used the fact that $\|\Pi^\perp\|_\infty \leq 1$, which does not hold for the spherical projection $\Pi$. 

Furthermore, by \cite[Lemma 5.2 and Remark 5.3]{Tolsa-lms},
\begin{equation}
\label{a2haus}
\begin{split}
\dist_H &(L_P \cap B_P, L_0 \cap B_P) \leq \dist_H (L_P \cap B_P, L_Q \cap B_P)+ \dist(0,L_Q)\\
&\leq \sum _{S\in\DD:Q \subset S \subset P} \a(S) \ell (P)+ \dist(0,L_Q).
\end{split}
\end{equation}
Therefore, by \eqref{esta2}, \eqref{grada2}, and \eqref{a2haus},
\begin{equation}
\label{a2}
|A_2| \lesssim \left(\frac  {\ell(Q)}{\ell(P)}\right)^{n/2}\, \ell(P)^n \left( \sum _{S\in\DD:Q \subset S \subset P} \a(S) + \frac{\dist(0,L_Q)}{\ell(P)}\right).
 \end{equation}
 
We now estimate the term $A_1$:
\begin{equation}
\label{a1}
\begin{split}
|A_1|& = \left|\int \phi_P (y)\ \left( \psi_I\left(\frac{\Pi( y)}r\right)-\psi_I\left(\frac{\Pi^\perp( y)}r\right)\right)d\mu(y)\right| \\
&\lesssim \frac{\| \nabla\psi_I\|_\infty }{r} \int_{B_P}|\Pi(y)-\Pi^\perp (y)|\, d \mu (y)\\
&\lesssim \frac1 {\ell(Q)} \left(\frac  {\ell(Q)}{\ell(P)}\right)^{n/2+1} \int_{B_P}|\Pi(y)-\Pi^\perp (y)| \, d \mu (y).
\end{split}
\end{equation}
It is easy to check that
\begin{equation}
\label{ppperp}
|\Pi(y)-\Pi^\perp (y)| \lesssim \dist(y, L_0).
\end{equation}
Furthermore, as in \eqref{a2haus}, for $y \in B_P$,
\begin{equation}
\label{a1haus}
\begin{split}
\dist(y, L_0) &\leq \dist(0,L_Q)+\dist(y,L_Q) \\
& \leq\dist(0,L_Q)+\dist(y,L_P)+\dist_H(L_P \cap 3B_P, L_Q \cap 3 B_P)\\
&\lesssim \dist(0,L_Q) +\dist(y,L_P)+ \sum _{S\in\DD:Q \subset S \subset P} \a(S) \ell (P).
\end{split}
\end{equation}
Therefore, by \eqref{a1}, \eqref{ppperp}, and \eqref{a1haus},
\begin{equation}
\label{a1fin}
\begin{split}
|A_1| \lesssim &\frac1 {\ell(Q)} \left(\frac  {\ell(Q)}{\ell(P)}\right)^{n/2+1} \Big(\dist(0,L_Q)\, \ell(P)^n\\
&\quad \quad+\int_{B_P} \dist(y,L_P)  \, d \mu(y)+ \ell (P)^{n+1}\sum _{S\in\DD:Q \subset S \subset P} \a(S) \Big)\\
&\lesssim \left(\frac  {\ell(Q)}{\ell(P)}\right)^{n/2} \left( \frac{\dist(0,L_Q)}{\ell(P)}+\sum _{S\in\DD:Q \subset S \subset P} \a(2S) \right) \ell(P)^n,
\end{split}
\end{equation}
where we used that, by \cite[Remark 3.3]{Tolsa-lms}, 
 $$\int_{B_P} \dist(y,L_P)  d \mu(y)\lesssim \a(2P) \ell(P)^{n+1}.$$

The lemma follows from the estimates \eqref{splitest1}, \eqref{a3zero}, \eqref{a2}, and \eqref{a1fin}.
\end{proof}

\begin{lemma}\label{lemfrx}
We have
\begin{equation}
\label{esta}
\begin{split}
|F_r(x)| \lesssim \frac{\dist(x,L_Q)}{\ell(Q)}+\sum _{S\in\DD: S \supset Q} \a(2S) \frac  {\ell(Q)}{\ell(S)}.
\end{split}
\end{equation}
\end{lemma}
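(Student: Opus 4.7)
The plan is to insert the pointwise estimate of Lemma \ref{lemfrx0} into the wavelet expansion \eqref{hrab} of $F_r(x)$, control the coefficients $a_I$ via Lemma \ref{boundaries}, and then sum over dyadic scales of $\ell(I)$. Choosing for each $I$ a dyadic ancestor $P(I)\supset Q$ with $\ell(P(I))\approx r\ell(I)\approx\ell(Q)\ell(I)$, Lemma \ref{lemfrx0} would give
$$|F_r(x)|\lesssim \sum_{I\in\wt\DD(\R^n):\,\ell(I)\ge 1/100}|a_I|\,\ell(I)^{n/2}\left(\frac{\dist(x,L_Q)}{r\ell(I)}+\sum_{\substack{S\in\DD\\ Q\subset S\subset P(I)}}\a(2S)\right).$$

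The key combinatorial input would be a count of the number $N_k$ of $I\in\wt\DD(\R^n)$ with $\ell(I)=2^k$ and $a_I\ne 0$. By Lemma \ref{boundaries}(a) such $I$ must have $5I$ meeting $\partial B_n(0,1)\cup\partial B_n(0,2)$, and a standard tubular-neighborhood count gives $N_k\lesssim 2^{-k(n-1)}$ when $2^k\le 1$ and $N_k\lesssim 1$ when $2^k\ge 1$. Combined with $|a_I|\lesssim 2^{kn/2}$ for $k\le 0$ (Lemma \ref{boundaries}(c)) and $|a_I|\lesssim 2^{-k(1+n/2)}$ for $k\ge 0$ (Lemma \ref{boundaries}(b)), one checks that $N_k\,|a_I|\,\ell(I)^{n/2}\lesssim 2^k$ for $-7\le k\le 0$ and $\lesssim 2^{-k}$ for $k\ge 0$, so all scale sums are geometric.

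For the term involving $\dist(x,L_Q)/(r\ell(I))$ I would pull out $\dist(x,L_Q)/r$; the analogous scale computation shows $\sum_I |a_I|\ell(I)^{n/2-1}\lesssim 1$, and using $r\approx\ell(Q)$ this yields the first summand $\dist(x,L_Q)/\ell(Q)$. For the $\a$-term I would swap the order of summation to obtain $\sum_{S\supset Q}\a(2S)\sum_{I:\,P(I)\supset S}|a_I|\ell(I)^{n/2}$. Since $S\supset Q$ and $P(I)\supset S$ force $\ell(I)\gtrsim\ell(S)/\ell(Q)\ge 1$, only the regime $\ell(I)\ge 1$ contributes; the inner sum is a geometric series $\sum_{k\gtrsim\log_2(\ell(S)/\ell(Q))}2^{-k}\lesssim \ell(Q)/\ell(S)$, producing the second summand of the claim.

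The main obstacle will be balancing the growth of $N_k$ at small scales against the decay of $|a_I|$, and ensuring the swap of summation order is justified with a uniform choice of ancestor $P(I)$ at each scale. The truncation $\ell(I)\ge 1/100$ removes the small-scale tail, while Lemma \ref{boundaries}(b) supplies precisely the $\ell(I)^{-1-n/2}$ decay needed for convergence at large scales; it is this exact exponent that turns the inner geometric sum into the factor $\ell(Q)/\ell(S)$ required by the lemma.
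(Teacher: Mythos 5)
Your proposal is correct and follows essentially the same route as the paper: you plug the pointwise estimate of Lemma \ref{lemfrx0} into the wavelet expansion of $F_r(x)$, use $\ell(P(I))\approx r\,\ell(I)\approx\ell(Q)\ell(I)$ to reduce the weight $\frac1{\ell(Q)^n}|a_I|(\ell(Q)/\ell(P(I)))^{n/2}\ell(P(I))^n$ to $|a_I|\ell(I)^{n/2}$, and then sum geometrically over scales after swapping the order of summation in the $\alpha$-term. The only cosmetic difference is that the paper applies Lemma \ref{boundaries}(b) uniformly for all $\ell(I)\ge 1/100$ and folds the bounded multiplicity per scale into the passage from $\sum_I$ to $\sum_{P\supset Q}$, whereas you make the multiplicity count $N_k$ explicit and invoke Lemma \ref{boundaries}(c) on the finitely many scales $1/100\le\ell(I)<1$; both lead to the same geometric decay $\sum_{k}\min(2^k,2^{-k})$ and to the factor $\ell(Q)/\ell(S)$ in the inner sum. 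Your remark about ``balancing the growth of $N_k$ at small scales'' is moot here precisely because of the truncation $\ell(I)\ge 1/100$ — only boundedly many scales lie below $\ell(I)=1$ — but you note this yourself, so there is no gap.
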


\begin{proof}
Recalling that $P=P(I) \supset Q$, by \eqref{integestim},
\begin{equation*}
|F_r(x)| \lesssim \frac{1}{\ell(Q)^n} \sum_{\substack{I\in  \wt \DD(\R^n): \\  \ell(I) \geq 1/100}} \!\!|a_I| \left(\frac  {\ell(Q)}{\ell(P(I))}\right)^{n/2} \left( \frac{\dist(x,L_Q)}{\ell(P(I))}+\sum _{\substack{S\in\DD: \\ Q \subset S \subset P(I)}}\!\! \a(2S) \right) \ell(P(I))^n.
\end{equation*}
Using (b) from Lemma \ref{boundaries}, 
\begin{equation*}
\begin{split}
|F_r(x)| \lesssim & \sum_{\substack{I\in  \wt \DD(\R^n):  \ell(I) \geq 1/100, \\
P(I) \supset Q}} \frac  {\ell(Q)}{\ell(P(I))}\left( \frac{\dist(x,L_Q)}{\ell(P(I))}+\sum _{S\in\DD:Q \subset S \subset P(I)} \a(2S) \right)\\
& \lesssim \sum_{P \in \DD:\,P \supset Q} \frac  {\ell(Q)}{\ell(P)}\left( \frac{\dist(x,L_Q)}{\ell(P)}+\sum _{S\in\DD:Q \subset S \subset P} \a(2S) \right) \\
&=\sum_{P \in \DD:\,P \supset Q} \frac{\dist(x,L_Q)\, \ell(Q)}{\ell(P)^2}+\sum _{S\in\DD: S \supset Q} \a(2S) \sum_{P \in \DD:\,P \supset S} \frac  {\ell(Q)}{\ell(P)} \\
&\lesssim \frac{\dist(x,L_Q)}{\ell(Q)}+\sum _{S\in\DD: S \supset Q} \a(2S) \frac  {\ell(Q)}{\ell(S)}.
\end{split}
\end{equation*}
\end{proof}

\begin{lemma}\label{lemi1}
The term $I_1$ in \rf{i1i2} satisfies
$$I_1\lesssim\mu(R).$$
\end{lemma}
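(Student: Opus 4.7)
The plan is to invoke the pointwise bound from Lemma \ref{lemfrx} and reduce the remaining estimate to two Carleson-type packing sums already available in the literature. Since $r\in[\ell(Q),2\ell(Q)]$, the $dr/\ell(Q)$ integration contributes only a harmless $O(1)$ factor, so squaring \rf{esta} and using $(a+b)^2\leq 2a^2+2b^2$ gives
$$I_1 \;\lesssim\; \sum_{Q\in\DD:Q\subset R}\int_Q \frac{\dist(x,L_Q)^2}{\ell(Q)^2}\,d\mu(x)\;+\;\sum_{Q\in\DD:Q\subset R}\mu(Q)\biggl(\sum_{S\in\DD:S\supset Q}\alpha(2S)\,\frac{\ell(Q)}{\ell(S)}\biggr)^{\!2}.$$
The first sum is precisely the quantity bounded by $\mu(R)$ in \cite[Lemmas 5.2 and 5.4]{Tolsa-lms}, as already used at the end of the proof of Theorem \ref{teoq1}.

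For the second sum I would linearize the square via Cauchy--Schwarz. Setting $\lambda_S := \ell(Q)/\ell(S)$, the sum $\sum_{S\supset Q}\lambda_S$ is a geometric series of ratio $1/2$ and hence $O(1)$, so
$$\biggl(\sum_{S\supset Q}\alpha(2S)\,\lambda_S\biggr)^{\!2}\;\lesssim\; \sum_{S\supset Q}\alpha(2S)^2\,\lambda_S.$$
Inserting this bound and exchanging the order of summation yields
$$\sum_{Q\subset R}\mu(Q)\sum_{S\supset Q}\alpha(2S)^2\,\frac{\ell(Q)}{\ell(S)}\;=\;\sum_{S\in\DD}\alpha(2S)^2\sum_{\substack{Q\in\DD(R)\\ Q\subset S}}\frac{\ell(Q)}{\ell(S)}\,\mu(Q).$$

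I would then split the outer sum into $S\subset R$ and $S\supsetneq R$. For $S\subset R$, grouping the inner cubes by generation (cubes $Q\in\DD$ with $\ell(Q)=2^{-k}\ell(S)$ partition $S$) gives $\sum_{Q\subset S}(\ell(Q)/\ell(S))\mu(Q)\lesssim\mu(S)\sum_{k\geq0}2^{-k}\lesssim\mu(S)$, so this piece is dominated by $\sum_{S\subset R}\alpha(2S)^2\mu(S)\lesssim\mu(R)$ by Theorem \ref{teotol} (with a harmless doubling-radius adjustment to pass from $\alpha(2S)$ to $\alpha(S)$). For $S\supsetneq R$, the same generation argument yields $\sum_{Q\subset R}(\ell(Q)/\ell(S))\mu(Q)\lesssim(\ell(R)/\ell(S))\mu(R)$; combined with the uniform bound $\alpha(2S)\lesssim1$ and the geometric series $\sum_{S\supsetneq R}\ell(R)/\ell(S)\lesssim1$, this contribution is also $\lesssim\mu(R)$.

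The only genuinely delicate point is handling the tail over ancestors $S\supsetneq R$, but the strong $\ell(Q)/\ell(S)$ decay produced by Lemma \ref{lemfrx} makes this elementary. Beyond that, no new ideas beyond Theorem \ref{teotol} and standard dyadic bookkeeping are required.
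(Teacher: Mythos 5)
Your argument is correct and follows essentially the same path as the paper's own proof: both start from the pointwise bound of Lemma \ref{lemfrx}, apply Cauchy--Schwarz to linearize the squared sum over ancestors using the geometric decay of $\ell(Q)/\ell(S)$, and then split into $S\subset R$ and $S\supsetneq R$, closing the estimate with Theorem \ref{teotol} and \cite[Lemmas 5.2, 5.4]{Tolsa-lms}. The only cosmetic difference is that you interchange summation before splitting in $S$, whereas the paper splits first and then applies Fubini, but the content is identical.
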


\begin{proof} By \eqref{esta},
\begin{equation}
\label{spliti1}
I_1 \lesssim  \sum_{Q \in \DD(R)} \int_Q \left(\frac{\dist(x,L_Q)}{\ell(Q)}+\sum _{S\in\DD: S \supset Q} \a(2S) \frac  {\ell(Q)}{\ell(S)} \right)^2 d\mu (x).
\end{equation}

By Cauchy-Schwartz,
\begin{equation}
\left( \sum _{S\in\DD: S \supset Q} \a(2S) \frac  {\ell(Q)}{\ell(S)} \right)^2 \leq  \sum _{S\in\DD: S \supset Q} \a(2S)^2 \frac{\ell(Q)}{\ell(S)}\; \cdot\!\! \sum _{S\in\DD: S \supset Q} \frac{\ell(Q)}{\ell(S)}. 
\end{equation}
Since $\sum _{S\in\DD: S \supset Q} \frac{\ell(Q)}{\ell(S)} \lesssim 1$,
\begin{equation}
\begin{split}
\label{i1}
I_1 &\lesssim  \sum_{Q \in \DD(R)} \int_Q \left(\frac{\dist(x,L_Q)}{\ell(Q)}\right)^2
\,d\mu(x)+\sum_{Q \in \DD(R)}  \sum _{S\in\DD: S \supset Q} \a(2S)^2 \frac{\ell(Q)}{\ell(S)} \mu(Q)\\
&=:S_1+S_2.
\end{split}
\end{equation}
By \cite[Lemmas 5.2 and Lemma 5.4]{Tolsa-lms} and Theorem \ref{teotol}, we obtain $S_1 \lesssim \mu(R)$. We now deal with the term $S_2$:
\begin{equation}
\label{s2split}
\begin{split}
S_2&=\sum_{Q \in \DD(R)}  \sum _{S\in\DD: Q \subset S \subset R} \a(2S)^2 \frac{\ell(Q)}{\ell(S)} \mu(Q)+\sum_{Q \in \DD(R)}  \sum _{S\in\DD: S\supsetneq R} \a(2S)^2 \frac{\ell(Q)}{\ell(S)} \mu(Q)
\\
&=:S_{21}+S_{22}.
\end{split}
\end{equation}
Using just that $\a(2S) \lesssim 1$,
\begin{equation}
\label{s22}
S_{22} \lesssim \sum_{Q \in \DD(R)}  \sum _{S\in\DD: S\supset R } \frac{\ell(Q)}{\ell(S)} \mu(Q) \lesssim \sum_{Q \in \DD(R)}  \mu(Q) \frac{\ell(Q)}{\ell(R)} \lesssim \mu (R).
\end{equation}

Finally, using Fubini and Theorem \ref{teotol},
\begin{equation}
\label{s21}
S_{21} \leq \sum_{S \in \DD(R)} \a(2S)^2 \sum _{Q\in\DD: Q \subset S}  \frac{\ell(Q)}{\ell(S)} \mu(Q) \lesssim \sum_{S \in \DD(R)} \a(2S)^2 \mu(S) \lesssim \mu(R).
\end{equation}
By \eqref{i1}, \eqref{s2split}, \eqref{s22}, and \eqref{s21} we obtain $I_1 \lesssim \mu(R)$.
\end{proof}

\vspace*{0.2in}


\subsection{Estimate of the term $I_2$ in \rf{i1i2}}

It remains to show that $I_2 \lesssim \mu(R)$. Recall that the cubes in the sum corresponding to $I_2$
in \rf{i1i2} satisfy $\a(1000\,Q) \leq \delta^2$. 

We need now to estimate $G_r(x)$ (see \eqref{hrab}) for $x\in Q$ and $\ell(Q)\leq r<2\ell(Q)$. 
Recall that
\begin{equation}
\label{b}
G_r(x)=\frac1{r^n} \sum_{I \in \wt \DD(\R^n): \ell(I) < 1/100} a_I\,
\int \psi_I\left(\frac{\Pi( y-x)}r\right)d\mu(y).
\end{equation}
The arguments will be more involved than the ones we used for $F_r(x)$.

To estimate $G_r(x)$ we now introduce a \textit{stopping time condition} for $P \in \DD$: $P$ belongs to $\cG_0$ if
\begin{enumerate}
\item $P \subset 1000\,	Q$, and
\item $\sum_{S \in \DD: P \subset S \subset 1000Q} \a(100S) \leq \delta$.
\end{enumerate} 
The maximal cubes in $\DD\setminus\cG_0$ may vary significantly in size, even if they are neighbors, and this would cause problems. For this reason we use a quite standard \textit{smoothing} procedure. We define
\begin{equation}
\label{ell}
\ell(y):=\inf_{P\in \cG_0}(\ell(P)+\dist(y,P)),\quad y \in \R^d,
\end{equation}
and
\begin{equation}
\label{dfun}
d(z):=\inf_{y\in \Pi^{-1}(z)}\ell(y), \quad z \in \R^n .
\end{equation}
\begin{lemma}
\label{lip}
The function $\ell(\cdot)$ is $1$-Lipschitz, and the function $d(\cdot)$ is $3$-Lipschitz.
\end{lemma}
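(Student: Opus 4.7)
The proof splits cleanly into an easy part (the $1$-Lipschitz property of $\ell$) and a harder part (the $3$-Lipschitz property of $d$), the latter requiring care because the spherical projection $\Pi$ is itself highly non-Lipschitz near the subspace $\{x^H=0\}$.

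For the first assertion, I note that for each fixed $P\in\cG_0$ the function $y\mapsto \ell(P)+\dist(y,P)$ is $1$-Lipschitz in $y$, since $\dist(\cdot,P)$ is. Because $\ell$ is the pointwise infimum over $P\in\cG_0$ of this family, and an infimum of $1$-Lipschitz functions is $1$-Lipschitz (provided the infimum is finite somewhere, which it is when $\cG_0\neq\varnothing$), the first claim follows immediately.

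For the second assertion, the plan is to show a ``Lipschitz section'' property for $\Pi$: for any $z,z'\in\R^n$ and any $y\in\Pi^{-1}(z)$ with $y^H\neq 0$, there exists $y'\in\Pi^{-1}(z')$ with $|y-y'|\leq 3|z-z'|$. Given this, since $y^H\neq 0$ is dense in $\Pi^{-1}(z)$ and $\ell$ is continuous, for any $\eta>0$ one can choose $y\in\Pi^{-1}(z)$ with $y^H\neq 0$ and $\ell(y)\leq d(z)+\eta$; then the section gives $y'\in\Pi^{-1}(z')$ with $\ell(y')\leq \ell(y)+|y-y'|\leq d(z)+\eta+3|z-z'|$, so $d(z')\leq d(z)+3|z-z'|$ after letting $\eta\to 0$. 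By symmetry this yields the $3$-Lipschitz property; the trivial cases $z=0$ or $z'=0$ (where $\Pi^{-1}(0)=\{0\}$) are handled directly since $|y-0|=|y|=|z|=|z-z'|$ for the non-zero side.

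To construct the section, write $e=z/|z|$, $e'=z'/|z'|$ (assume $z,z'\neq 0$), and parametrize $y\in\Pi^{-1}(z)$ as $y=(\alpha e, v)$ with $\alpha=|y^H|>0$ and $\alpha^2+|v|^2=|z|^2$. Let $R$ be a rotation of $\R^n$ sending $e$ to $e'$ (e.g., the rotation in the $2$-plane they span, or a reflection if $e=-e'$), extended by the identity on $\R^{d-n}$, and set $y':=\rho R(y)$ with $\rho=|z'|/|z|$. A direct check shows $y'=(\rho\alpha\, e',\rho v)$, so $|y'|=|z'|$ and $(y')^H=\rho\alpha\,e'$ is a strictly positive multiple of $e'$; hence $\Pi(y')=z'$. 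Writing $\rho R(y)-y=(\rho-1)R(y)+(R(y)-y)$, the triangle inequality and the fact that $R$ is an isometry give
\begin{equation*}
|y-y'|\leq |\rho-1|\,|y|+|R(y)-y|=\bigl||z'|-|z|\bigr|+\alpha\,|e-e'|.
\end{equation*}
The first term is at most $|z-z'|$. For the second, I will use the elementary inequality $|e-e'|\leq 2|z-z'|/\max(|z|,|z'|)$, which follows by a short calculation (assuming WLOG $|z|\leq |z'|$, write $|e-e'|\cdot|z'|=|z\,|z'|/|z|-z'|\leq (|z'|-|z|)+|z-z'|\leq 2|z-z'|$). Since $\alpha\leq|z|\leq\max(|z|,|z'|)$, the second term is bounded by $2|z-z'|$, and therefore $|y-y'|\leq 3|z-z'|$, as needed.

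The only real obstacle is the non-smoothness of $\Pi$ on $\{x^H=0\}$, where the ``direction'' of $y^H$ is undefined, so the rotation $R$ has nothing to act on. This is why the construction is restricted to $y^H\neq 0$ and the reduction to the dense subset $\{y^H\neq 0\}$ of $\Pi^{-1}(z)$ is necessary; continuity of $\ell$ (from part one) is exactly what makes this reduction legitimate.
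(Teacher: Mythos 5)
Your proposal is correct and takes essentially the same approach as the paper: the point $y'$ you construct as $\rho R(y)$ coincides with the paper's $y'\in\Pi^{-1}(z')\cap L_{y_0}$ (with $y_0=\rho y$), and in both cases the bound $3|z-z'|$ is obtained by splitting $y\to y'$ into a radial scaling step (cost $\bigl||z|-|z'|\bigr|\le|z-z'|$) and a rotation step in the horizontal $n$-plane (cost $\le 2|z-z'|$). Your treatment of the degenerate cases $y^H=0$ and $z=0$ or $z'=0$ via density and continuity of $\ell$ is a welcome addition that the paper leaves implicit.
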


\begin{proof} For simplicity we assume that $x=0$.
The function $\ell(\cdot)$ is $1$-Lipschitz, as the infimum of the family of $1$-Lipschitz functions $\{\ell(P)+\dist(\cdot,P)\}_{P \in \GG_0}$.

Let us turn our attention to $d(\cdot)$. 
Let $z,z' \in \R^n$ and $\ve>0$. Let $y \in \Pi^{-1}(z)$ such that $\ell(y) \leq d(z)+ \ve$. Consider the points
$$y_0 =  \frac{|z'|}{|y|}\,y
\quad\text{ and }\quad 
z_0 = \frac{|z'|}{|z|}\,z.$$ 
Notice that $\Pi(y_0) = z_0$. Let $L_{y_0}$ be the $n$-plane parallel to $\Rn$ which contains $y_0$ and consider the point $\{y'\} =\Pi^{-1}(z') \cap L_{y_0}$.
That is, $y'$ is the point which fulfils the following properties:
$$|y'|=|z'|,\quad y'^H = \frac{|y'^H|}{|z'|}\,z',\quad y'^V=y_0^V.$$
Observe that 
$$|y_0|=|z_0|=|z'|=|y'|.$$
Since $y_0^V=y'^V$, this implies that $|y_0^H|=|y'^H|$.
 Furthermore,
$$|y_0-y'|=|y_0^H-y'^H|=\left|\frac{|y_0^H|}{|y_0|}z_0- \frac{|y'^H|}{|y'|}z'\right|=\frac{|y'^H|}{|y'|}|z_0-z'|\leq |z_0-z'|.$$
Moreover, 
$$|y-y_0| = |z-z_0| = \biggl|z- \frac{|z'|}{|z|}\,z\biggr| = \bigl||z|-|z'|\bigr| \leq |z-z'|.$$
Hence,
\begin{equation*}
\begin{split}
\label{3lip}
|y-y'| &\leq |y-y_0|+|y_0-y'| \leq |z-z'|+|z_0-z'| \\
&\leq |z-z'|+|z_0-z|+|z-z'| \leq 3|z-z'|.
\end{split}
\end{equation*}
Then, using that $\ell$ is $1$-Lipschitz and \eqref{3lip},
\begin{equation*}
d(z') \leq \ell(y')\leq |y-y'|+\ell(y) \leq 3|z-z'|+d(z)+\ve.
\end{equation*}
Since $\ve>0$ was arbitrary we deduce that $d(z') \leq 3|z-z'|+d(z)$. In the same way one gets that $d(z) \leq 3|z-z'|+d(z')$.
\end{proof}

For $\delta$ small enough, the condition $\a(1000\,Q) \leq \delta^2$ guarantees that any cube $P \subset 1000\,Q$ such that $\ell(P)=\ell(Q)$ belongs to $\cG_0$, in particular $\ell(y) \leq \ell(Q)$ for all $y \in 1000 \, Q$. Furthermore since $\cG_0 \neq \varnothing$, we deduce that $\ell(y), \, d(z)< \infty$ for all $y \in \R^d, \, z \in \R^n$.

Now we consider the family $\cF$ of cubes $I \in \DD(\R^n)$ such that
\begin{equation}
\label{notes2}
r \,\diam(I) \leq  \frac{1}{5000} \inf_{z \in r\cdot I} d(z).
\end{equation}
Let $\cF_0 \subset \cF$ be the subfamily of $\cF$ consisting of cubes with maximal length. In particular the cubes in $\cF_0$ are pairwise disjoint.
 Moreover it is easy to check that if $I,J \in \cF_0$ and 
\begin{equation}
\label{notes3}
20 \, I \cap 20 \,J \neq \varnothing,
\end{equation}
then $\ell(I) \approx \ell(J)$.

We denote by $\cG (x,r)$ the family of cubes $I \in \wt \DD(\Rn)$ which satisfy
\begin{itemize}
\item $\ell(I) \leq \frac 1{100}$,
\medskip
\item $5I \cap (\partial B_n(0,1) \cup \partial B_n (0,2)) \neq \varnothing$,
\medskip
\item $\bigl(x+\Pi^{-1}(r\cdot5I)\bigr)\cap\supp(\mu)\neq \varnothing$, and
\medskip
\item $I$ is not contained in any cube from $\cF_0$.
\end{itemize}
We denote by $\cT (x,r)$ the family of cubes $I \in \wt \DD(\Rn)$ which satisfy
\begin{itemize}
\item $\ell(I) \leq \frac 1{100}$,
\medskip
\item $5I \cap (\partial B_n(0,1) \cup \partial B_n (0,2)) \neq \varnothing$,
\medskip
\item $\bigl(x+\Pi^{-1}(r\cdot5I)\bigr)\cap\supp(\mu)\neq \varnothing$, and
\medskip
\item $I \in \cF_0$.
\end{itemize}
Now we write
\begin{equation}
\label{gt}
\begin{split}
G_r(x)&=\frac1{r^n} \sum_{I \in \cG (x,r)} a_I\,
\int \psi_I\left(\frac{\Pi( y-x)}r\right)d\mu(y)\\
&\quad \quad+\frac1{r^n} \sum_{I \in \cT (x,r)} \sum_{J\in \wt\DD(\Rn):\, J \subset I} a_J\,
\int \psi_J\left(\frac{\Pi(y-x)}r\right)d\mu(y)\\
&=:G_{r,1}(x)+G_{r,2}(x),
\end{split}
\end{equation}
so that
\begin{equation}
\label{i1i2in}
\begin{split}
I_2&\leq \sum_{\substack{Q \in \DD : Q \subset R, \\ \alpha (1000 Q) \leq \de^2}}
 \int_Q \int_{\ell(Q)}^{2\ell(Q)} |G_{r,1}(x)|^2\,\frac{dr}{\ell(Q)}\,d\mu(x)\\
&\quad\quad\quad\quad+\sum_{\substack{Q \in \DD : Q \subset R, \\ \alpha (1000 Q) \leq \de^2}} \int_Q \int_{\ell(Q)}^{2\ell(Q)} |G_{r,2}(x)|^2\,\frac{dr}{\ell(Q)}\,d\mu(x)\\
&=: I_{21}+I_{22}.
\end{split}
\end{equation}

First we will deal with the term $G_{r,1}(x)$. 
To this end we need several auxiliary lemmas. 



\begin{lemma}
\label{key}
If $I \in \cG (x,r)$, then there exists $P:=P(I) \in \DD$ with $\ell(P) \approx r \ell(I)$ such that
$$\supp (\mu) \cap \bigl(x+\Pi^{-1}(r\cdot 5I)\bigr) \subset 3P.$$
\end{lemma}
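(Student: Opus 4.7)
The plan is to exploit the failure of the stopping condition at $I$. Since $I \in \cG(x,r)$ is not contained in any cube of $\cF_0$, and since the cubes of $\cF_0$ are by construction the maximal elements of $\cF$, it follows that $I \notin \cF$; equivalently,
$$
r\,\diam(I) > \tfrac{1}{5000}\,\inf_{z \in r \cdot I} d(z).
$$
Thus there exists $z^* \in r \cdot I$ with $d(z^*) \lesssim r\,\ell(I)$, and unwinding the definitions of $d(\cdot)$ and $\ell(\cdot)$ produces a point $y^* \in \Pi^{-1}(z^*)$ and a stopping cube $P_0 \in \cG_0$ with
$$
\ell(P_0) + \dist(y^*, P_0) \lesssim r\,\ell(I).
$$

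Next, I would take $P = P(I)$ to be the smallest dyadic ancestor of $P_0$ with $\ell(P) \geq C_0\, r\,\ell(I)$, for a large universal constant $C_0$ to be fixed. Then $\ell(P) \approx r\,\ell(I)$ and $P_0 \subset P \subset 1000Q$, so the chain condition defining $\cG_0$ passes to $P$, yielding $\alpha(100 P) \leq \delta$ and, by telescoping as in \cite[Lemma 3.4]{Tolsa-lms}, a Hausdorff-distance bound between $L_P$ and $L_Q$ of order $\delta\,\ell(Q) \approx \delta r$.

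It remains to check that every $y \in \supp(\mu) \cap (x + \Pi^{-1}(r \cdot 5I))$ lies in $3P$. Adopting the standing convention $x = 0$ with $L_Q$ parallel to $\R^n$ (so that $|v_Q| \lesssim \delta r$, since $0 \in \supp(\mu)$ forces $L_Q$ to be close to the origin when $\alpha(1000Q) \leq \delta^2$), one observes that $\Pi(y),\,\Pi(y^*) = z^*$ both lie in the cube $r \cdot 5I$ of diameter $\lesssim r\,\ell(I)$. The key geometric fact is that for a plane $L$ parallel to $\R^n$ with vertical offset $|v_L| \ll r$, the set $L \cap \Pi^{-1}(r \cdot 5I)$ is the image of $r \cdot 5I$ under the bi-Lipschitz map $z \mapsto (\sqrt{|z|^2 - |v_L|^2}/|z|)\,z + v_L$, hence has diameter $\lesssim r\,\ell(I)$. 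Combining the pointwise closeness of $\supp(\mu)$ to $L_P$ on the relevant portion of the cylinder with $\dist(y^*, P) \lesssim r\,\ell(I)$, one deduces $|y - y^*| \lesssim r\,\ell(I)$, and therefore $y \in 3P$ provided $C_0$ is chosen large enough.

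The hard part will be propagating plane-closeness of $\supp(\mu)$ from the scale $\ell(Q) \approx r$, where only the weak bound $\alpha(1000Q) \leq \delta^2$ is available, down to the scale $r\,\ell(I)$ required by the bi-Lipschitz estimate above. A single point $y \in \supp(\mu) \cap \Pi^{-1}(r \cdot 5I)$ may lie far from $P_0$, so the fine control at scale $\ell(P_0)$ from $P_0 \in \cG_0$ does not apply directly; rather, one must invoke the $\alpha$-chain condition at all intermediate scales between $\ell(P_0)$ and $\ell(Q)$, tracking how $\supp(\mu)$ stays close to the approximating planes $L_S$ and how these planes are mutually close, in order to upgrade pointwise proximity to $L_P$ at the required scale.
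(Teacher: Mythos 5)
Your setup is correct and matches the paper's: unwinding $I \notin \cF_0$ (equivalently $I \notin \cF$) produces $z^* \in r\cdot I$ with $d(z^*) \lesssim r\ell(I)$, hence a point $y^* \in \Pi^{-1}(z^*)$ and a cube of $\cG_0$ (which the paper calls $P'$, you call $P_0$) with $\ell(P') + \dist(y^*,P') \lesssim r\ell(I)$, and choosing $P(I)$ as a dyadic ancestor of $P'$ at scale $\approx r\ell(I)$ is essentially equivalent to the paper's choice. However, the step you yourself flag as ``the hard part'' --- showing that the \emph{entire} set $\supp(\mu) \cap (x+\Pi^{-1}(r\cdot5I))$ lies in $3P$ --- is a genuine gap, and the strategy you sketch (propagating plane-closeness of $\supp(\mu)$ along the $\alpha$-chain) does not close it. The $\cG_0$ condition $\sum_{S:P'\subset S\subset 1000Q}\alpha(100S)\leq\delta$ is an $L^1$-type bound; via AD-regularity it upgrades to a pointwise bound $\dist(\cdot,L_{100S})\lesssim\delta^{1/(n+1)}\ell(S)$ on $\supp(\mu)\cap B_{100S}$, so climbing the chain to scale $\ell(Q)$ only confines $\supp(\mu)$ to a slab of thickness a small-but-fixed power of $\delta$ times $r$ around $L_Q$. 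The part of $\Pi^{-1}(r\cdot5I)$ inside a slab of thickness $t$ around $L_Q$ has diameter of order $t$, so this yields diameter $\approx\delta^{1/(n+1)}r$, not $\approx r\ell(I)$ --- and since $\cG(x,r)$ imposes no lower bound on $\ell(I)$, this can be far too weak.

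What the paper uses instead is an indirect two-cube contradiction. Suppose there were two cubes $P_0,P_1\in\DD$ with $r\ell(I)\leq\ell(P_0)=\ell(P_1)\leq10r\ell(I)$, both meeting $x+\Pi^{-1}(r\cdot5I)$, and $\dist(P_0,P_1)\geq A\ell(P_0)$. Taking (after renaming) $P_0$ to be farther from $P'$, so $\dist(P_0,P')\gtrsim A\ell(P_0)$, and letting $P''$ be the minimal cube with $P_0\cup P'\subset 3P''$ (so $\ell(P'')\approx\dist(P_0,P')$, and $\ell(P'')\ll\ell(Q)$ because $\alpha(1000Q)\leq\delta^2$), the geometry of $\Pi^{-1}(r\cdot5I)$ forces either $\beta_1(P'')\gg\delta$ or $\mang(L_{P''},L_Q)\gg\delta$: a chunk of $\supp(\mu)$ sitting on the cylinder at distance $\approx\ell(P'')$ from $P'$ must lie at distance $\approx\ell(P'')$ from $L_Q$, which is incompatible with flatness at scale $\ell(P'')$. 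By \cite[Lemma 5.2]{Tolsa-lms}, either alternative forces $\sum_{S:P'\subset S\subset Q}\alpha(100S)\gg\delta$ --- the factor $100$ in the definition of $\cG_0$ is chosen exactly so that $P''$ is absorbed into $100S$ for a suitable ancestor $S$ of $P'$ --- contradicting $P'\in\cG_0$. This contradiction, not any direct propagation of the $\alpha$-bounds, is the mechanism that localizes $\supp(\mu)\cap(x+\Pi^{-1}(r\cdot5I))$ at scale $r\ell(I)$.
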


\begin{proof}
Notice that, by definition, $\supp (\mu) \cap (x+ \Pi^{-1}(r\cdot 5I))\neq\varnothing$.
Observe also that the conclusion of the lemma holds if $\ell(r\cdot5I)\approx\ell(Q)$ because $\alpha(1000Q)
\leq\delta^2$.

So assume that $\ell(r\cdot5I)\ll\ell(Q)$ and consider $z \in r\cdot5I$.
Since $I \in \cG(x,r)$, $I \notin \cF_0$, and $d$ is $3$-Lipschitz, we have
$$d(z) \leq c_2\,r \ell(I),$$
for some absolute constant $c_2$.
Take $y\in x+\Pi^{-1}(r\cdot5I)$ such that
$$\ell(y)\leq 2c_2\,r\ell(I).$$
Let $\ve= c_2\,r\ell(I)$. By definition, there exists some cube $P'\in \GG_0$ such that
$$\ell(P')+\dist(y,P') \leq \ell(y)+\ve \leq 3c_2\,r\,\ell(I).$$

Let $A>10$ be some big constant to be fixed below.
Suppose that there are two cubes $P_0,P_1 \in \DD$ which satisfy the following properties
\begin{itemize}
\item[(i)]  $r \ell (I) \leq \ell(P_0)= \ell(P_1) \leq 10\, r \ell(I)$,
\item[(ii)] $\dist(P_0, P_1) \geq A \ell(P_0),$
\item[(iii)] $P_i \cap \bigl(x+\Pi^{-1}(r\cdot5I)\bigr) \neq \varnothing$ for $i=1,2$.
\end{itemize}
Suppose that $\dist(P_0,P')\geq \dist(P_1,P')$. Then from (ii) we infer that
$$\dist(P_0,P')\gtrsim A\,\ell(P_0).$$

Let $P'' \in \DD$ such that $P_0 \cup P' \subset 3P''$ with minimal side length, so that $\ell(P'')\approx  \ell(P_0)+ \ell(P')+\dist(P_0,P').$ Since $\a(1000 Q) \leq \de^2$ and $\ell(P_0),\ell(P_1),\ell(P')\ll\ell(Q)$, it follows
easily that we must also have $\ell(P'')\ll\ell(Q)$. It is not difficult to check that either 
$$\beta_1(P'') \gg \delta,\quad \mbox{or} \quad \mang (L_{P''}, L_Q) \gg \delta.$$
In either case one has
$$\sum_{S \in \DD:\, P'' \subset S \subset Q}  \a(S) \gg \delta.$$
We deduce that
$$\sum_{S \in \DD:\, P' \subset S \subset Q}  \a(100S) \gg \delta,$$
because $P'\subset 3P''$.
This contradicts  the fact that $P'\in \GG_0$.

We have shown that a pair of cubes $P_0,P_1$ such as the ones above does not exist.
Thus, if $P_0\in\DD$ satisfies
$$r \ell (I) \leq \ell(P_0)\leq 10\, r \ell(I),$$
and
$$P_0 \cap \bigl(x+\Pi^{-1}(r\cdot5I)\bigr) \neq \varnothing,$$
then any other cube $P_1$ for which these properties also hold must be contained
in the ball $B(x_{P_0},c_3\,A\,\ell(P_0))$, where $x_{P_0}$ stands for the center of $P_0$ and $c_3$ is some absolute constant.
Hence letting $P=P(I)$ be some suitable ancestor of $P_0$, the lemma follows.
\end{proof}

\vspace{2mm}

\begin{lemma}\label{lemgr100}
Let $I \in \cG (x,r)$ and let $P=P(I) \in \DD$ be the cube from Lemma \ref{key}, so that $\supp(\mu)\cap(x+\Pi^{-1}(r\cdot5I)) \subset 3P.$  We have
\begin{equation}
\label{gest}
\left|\int \psi_I\left(\frac{\Pi( y-x)}r\right)d \mu(y) \right| \lesssim \left( \frac{\ell(Q)}{\ell(P)} \right)^{n/2} \left(\sum_{S \in \DD: P \subset S \subset Q}\a(S) +\frac{\dist(x,L_Q)}{\ell(Q)}  \right) \ell(P)^n.
\end{equation}
\end{lemma}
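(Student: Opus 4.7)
The proof adapts the scheme of Lemma~\ref{lemfrx0} to the opposite scale regime. Because $\ell(I) < 1/100$ and $r \approx \ell(Q)$, we now have $\ell(P) \approx r\,\ell(I) \ll \ell(Q)$, so $P$ is a small cube inside $1000\,Q$ rather than a large ancestor of $Q$; the chain of $\alpha$-coefficients therefore runs from $P$ up to $Q$ rather than the other way around. The new key ingredient is Lemma~\ref{key}, which localizes the integrand to the scale of $P$ via the inclusion $\supp(\mu) \cap (x + \Pi^{-1}(r \cdot 5I)) \subset 3P$.

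Reduce to $x = 0$ and assume $L_Q \parallel \R^n$; set $L_0 := \R^n$ and let $\Pi^\perp$ denote orthogonal projection onto $L_0$. Choose a smooth bump $\phi_P$ with $\chi_{3P} \leq \phi_P \leq \chi_{B_P}$ and $\|\nabla\phi_P\|_\infty \lesssim \ell(P)^{-1}$, arranged so that $\phi_P \equiv 1$ on the cube $r \cdot 5I$ viewed as a subset of $L_0$ (feasible since $r \cdot 5I$ has diameter $\approx \ell(P)$ and by the construction of $P$ in Lemma~\ref{key} sits close to $3P$). By Lemma~\ref{key} the insertion of $\phi_P$ preserves the integral, and the same three-term decomposition as in \eqref{splitest1}, with approximating measure $c_P\HH^n|_{L_0}$ coming from the $\alpha$-minimizer at $P$, yields
\begin{equation*}
\int \psi_I\!\left(\tfrac{\Pi(y)}{r}\right) d\mu(y) = A_1 + A_2 + A_3.
\end{equation*}
The term $A_3$ vanishes exactly as in \eqref{a3zero} because $\phi_P \equiv 1$ on $r \cdot 5I \subset L_0$ and $\psi_I$ has zero mean.

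For $A_2$, further split $d(\mu - c_P\HH^n|_{L_0})$ through $d(\mu - c_P\HH^n|_{L_P})$. The gradient estimate $\|\nabla(\phi_P\,\psi_I(\Pi^\perp(\cdot)/r))\|_\infty \lesssim \ell(Q)^{-1}(\ell(Q)/\ell(P))^{n/2+1}$ from \eqref{grada2} applies verbatim, so the first piece contributes $\lesssim (\ell(Q)/\ell(P))^{n/2}\,\alpha(P)\,\ell(P)^n$ by the definition of $\alpha(P)$; the second requires bounding $\dist_H(L_P \cap B_P, L_0 \cap B_P)$, which via the triangle inequality through $L_Q$ and \cite[Lemma~5.2 and Remark~5.3]{Tolsa-lms} is $\lesssim \ell(P)\sum_{S:\,P\subset S\subset Q}\alpha(S) + \dist(0,L_Q)$. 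For $A_1$, use $|\Pi(y) - \Pi^\perp(y)| \lesssim \dist(y,L_0)$ together with $\dist(y,L_0) \leq \dist(y,L_P) + \dist_H(L_P \cap 3B_P, L_0 \cap 3B_P)$ for $y \in B_P$; absorb the $\dist(y,L_P)$ contribution via $\int_{B_P}\dist(y,L_P)\,d\mu \lesssim \alpha(2P)\,\ell(P)^{n+1}$ from \cite[Remark~3.3]{Tolsa-lms}, and control the Hausdorff-distance part as before. Summing the pieces delivers the claimed inequality.

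The main technical subtlety is the scale bookkeeping. The $L^2$-normalization $\|\psi_I\|_\infty \lesssim \ell(I)^{-n/2}$ combined with $\ell(I) \approx \ell(P)/\ell(Q)$ produces the prefactor $(\ell(Q)/\ell(P))^{n/2}$. The denominator $\ell(Q)$ on the $\dist(x,L_Q)$ term (in place of the $\ell(P)$ that appeared in Lemma~\ref{lemfrx0}) must be teased out by exploiting the refined estimate $|\Pi(y)-\Pi^\perp(y)| \lesssim \dist(y,L_0)^2/|y|$ for the spherical projection, valid because $y \in 3P \subset 1000\,Q$ forces $|y| \lesssim \ell(Q)$ while $\dist(y,L_0) \lesssim \dist(0,L_Q)$ on $\supp(\mu)$ (by $\alpha(1000\,Q) \leq \delta^2$). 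This is the only place where one must go beyond the linear deviation $|\Pi - \Pi^\perp| \lesssim \dist(\cdot,L_0)$ used in Lemma~\ref{lemfrx0}, and it is what produces the sharper $\ell(Q)^{-1}$ normalization in the final estimate.
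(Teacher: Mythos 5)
Your decomposition is structurally the one from Lemma~\ref{lemfrx0}, with $L_0$ as the reference plane and the $\Pi$-vs-$\Pi^\perp$ correction handled in $A_1$. That template does not survive the change of scale regime, and the quadratic estimate $|\Pi(y)-\Pi^\perp(y)|\lesssim \dist(y,L_0)^2/|y|$ that you propose as the fix is not enough. The damage is most visible in your $A_2$. After routing $d(\mu-c_P\HH^n|_{L_0})$ through $c_P\HH^n|_{L_P}$, the term involving $d(\HH^n|_{L_P}-\HH^n|_{L_0})$ is controlled by $\dist_H(L_P\cap B_P,L_0\cap B_P)\lesssim \ell(P)\sum_{P\subset S\subset Q}\a(S)+\dist(x,L_Q)$. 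Combined with $\|\nabla(\phi_P\,\psi_I(\Pi^\perp(\cdot)/r))\|_\infty\approx \ell(P)^{-1}(\ell(Q)/\ell(P))^{n/2}$ this gives a contribution of order $(\ell(Q)/\ell(P))^{n/2}\,\ell(P)^n\,\dist(x,L_Q)/\ell(P)$, which exceeds the target $(\ell(Q)/\ell(P))^{n/2}\,\ell(P)^n\,\dist(x,L_Q)/\ell(Q)$ by the uncontrolled factor $\ell(Q)/\ell(P)$. The same loss occurs in $A_1$ even with the quadratic bound: since $\dist(y,L_0)\lesssim \dist(x,L_Q)$ and $|y|\approx\ell(Q)$ you get $A_1\lesssim(\ell(Q)/\ell(P))^{n/2}\,\ell(P)^n\,\dist(x,L_Q)^2/(\ell(P)\ell(Q))$, again off by $\dist(x,L_Q)/\ell(P)$. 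And the claim that $A_3$ vanishes because $\phi_P\equiv1$ on $r\cdot5I$ (viewed in $L_0$) is not justifiable: when $\dist(x,L_Q)\gg\ell(P)$, the slab of $L_0$ lying over $r\cdot5I$ can lie entirely outside $3B_P$.

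The paper avoids all of this by shifting the reference plane. It takes $y_P\in B_P\cap\supp\mu$ with $\dist(y_P,L_P)\lesssim\a(P)\ell(P)$ and uses $\wt L_P$, the translate of $L_0$ through $y_P$. Then $A_1$ compares $\mu$ to $c_P\HH^n|_{L_P}$ (controlled purely by $\a(P)$), $A_2$ compares $\HH^n|_{L_P}$ to $\HH^n|_{\wt L_P}$ (controlled purely by the angle $\mang(L_P,\wt L_P)=\mang(L_P,L_Q)\lesssim\sum_{P\subset S\subset Q}\a(S)$, with no $\dist(x,L_Q)$ appearing), and the third term $A_3=\int\psi_I(\Pi(y)/r)\,d\wt\sigma_P$ is no longer zero. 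It is estimated via the Jacobian of $\Pi|_{\wt L_P}\to L_0$ in Lemma~\ref{disttilds}: the Jacobian deviates from a constant by $\lesssim |h|^2\ell(P)/r^3$ with $h=y_P^V$, and it is exactly the extra factor $|h|/r\approx\dist(x,L_Q)/\ell(Q)$ embedded in this Jacobian computation that produces the correct normalization in the final bound. That push-forward argument — not a pointwise refinement of $|\Pi-\Pi^\perp|$ — is the missing ingredient in your proposal.
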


\begin{proof} Without loss of generality we assume that $x=0$ and as before we let $L_0=\R^n$ be the $n$-plane parallel to $L_Q$ containing $0$. Let also $y_P \in B_P \cap \supp (\mu)$ be such that $\dist(y_P,L_P) \lesssim \a(P) \ell (P)$. The existence of such point follows from \cite[Remark 3.3]{Tolsa-lms} and Chebychev's inequality. We also denote by $\wt L_P$ the $n$-plane parallel to $L_0$ which contains $y_P$. We set $\sigma_P = c_P \ha^{n}_{|L_P}$ and $\wt \sigma_P=c_P\ha^{n}_{|\wt L_P}$. Let $\phi_P$ be a smooth function such that $\chi_{B_P} \leq \phi_P \leq \chi_{3 B_P}$ and $\|\nabla \phi_P\|_\infty \lesssim \ell(P)^{-1}$. 
Since $\alpha(P)$ is assumed to be very small, we have
$\Pi^{-1}(r\cdot5I)\cap \wt L_P\subset B_P$.
Then we write
\begin{equation}
\label{splitestg}
\begin{split}
\int \psi_I&\left(\frac{\Pi( y)}r\right)d\mu(y)=\int \phi_P (y) \,\psi_I\left(\frac{\Pi( y)}r\right)d\mu(y)\\
&=\int \phi_P (y) \psi_I\left(\frac{\Pi( y)}r\right)(d\mu(y)-d \sigma_P(y))\\
&\quad\quad+\int \phi_P (y) \psi_I\left(\frac{\Pi( y)}r\right)(d \sigma_P(y)-d \wt\sigma_P(y))+\int \psi_I\left(\frac{\Pi( y)}r\right)d \wt\sigma_P(y)\\
&=:A_1+A_2+A_3.
\end{split}
\end{equation}

Now we turn our attention to $A_1$:
\begin{equation}
\begin{split}
\label{g1}
|A_1|&=\left|\int \phi_P (y) \psi_I\left(\frac{\Pi( y)}r\right)(d\mu(y)-d \sigma_P(y)) \right| \\
&\leq \left\| \nabla \left(\phi_P \psi_I\left(\frac{\Pi( \cdot)}r\right)\right)\right\|_\infty \a(P) \ell(P)^{n+1}\\
&\lesssim \left(\frac{1}{\ell(P)} \frac{1}{\ell(I)^{n/2}}+\frac{1}{\ell(I)^{n/2+1}} \frac 1r\right)\a(P) \ell(P)^{n+1}\\
&\approx  \left( \frac{\ell(Q)}{\ell(P)}\right)^{n/2}\a(P) \ell(P)^{n},
\end{split}
\end{equation}
where we used that  $\ell(P) \approx \ell(I) \ell(Q)$ and that $\|\nabla \Pi\|_\infty \lesssim 1$ on $B_P$ since $B_P$ lies far from the subspace ${\Pi^\perp}^{-1}(\{0\})$.

We will now estimate the term $A_2$. We have
$$|A_2|=\left| \int \phi_P (y) \psi_I\left(\frac{\Pi( y)}r\right)(d \sigma_P(y)-d \wt\sigma_P(y)) \right|.$$
As in  \cite[Lemma 5.2]{Tolsa-lms},
$$\mang(L_P,\wt L_P)=\mang(L_P,L_Q)\lesssim \sum_{S \in \DD: P \subset S \subset Q} \a(S).$$
Therefore,
$$\dist_H(\wt L_P \cap B_P, L_P \cap B_P) \lesssim \sum_{S \in \DD: P \subset S \subset Q}\a(S) \ell(P),$$
and, as in \eqref{g1},
\begin{equation}
\label{g2}
\begin{split}
|A_2| &\lesssim \left\| \nabla \left(\phi_P \psi_I\left(\frac{\Pi( \cdot)}r\right)\right)\right\|_\infty\, \ell(P)^n \,\dist_H(\wt L_P \cap B_P, L_P \cap B_P)\\
&\lesssim \left( \frac{\ell(Q)}{\ell(P)} \right)^{n/2} \sum_{S \in \DD: P \subset S \subset Q}\a(S) \ell(P)^n.
\end{split}
\end{equation}

We now consider $A_3$.
Let $B$ be a ball centered in $L_0$ such that $\supp \,\psi_I \left( \frac{\cdot}{r}\right) \subset B$ and $\diam (B) \lesssim \ell(P)$. For some constant $c_*$, with $0\leq c_*\lesssim1$, to be fixed below, we write
\begin{equation}
\label{g3}
\begin{split}
\left|\int \psi_I\left(\frac{\Pi( y)}r\right)d \wt\sigma_P(y) \right|&=\left| c_P \int \psi_I\left(\frac{ y}r\right)d ( \Pi_\sharp \ha^n_{|\wt L_P})(y) \right|\\
&\leq \left| c_P \int \psi_I\left(\frac{ y}r\right)d (\Pi_\sharp \ha^n_{|\wt L_P})(y)
- c_*\,c_P \int \psi_I\left(\frac{ y}r\right) d \ha^n_{|L_0} (y) \right| \\
&\quad\quad\quad+\left|c_*\,c_P \int \psi_I\left(\frac{ y}r\right) d \ha^n_{|L_0} (y) \right|\\
&\lesssim  \frac{ \|\nabla \psi_I\|_\infty}{\ell(Q)} \,\dist_B(\Pi_\sharp \ha^n_{|\wt L_P}, c_* \ha^n_{|L_0}),
\end{split}
\end{equation}
where in the last inequality we took into account that $c_*\,c_P\lesssim1$ and that $\int_{\Rn}\psi_I \left( \frac{ y}r \right) dy=0$.

Notice that the map $\Pi_{|\wt L_P \ra L_0}$ need not be affine and so the term
$\dist_B(\Pi_\sharp \ha^n_{|\wt L_P}, c_* \ha^n_{|L_0})$ requires some careful analysis. Anyway, we claim that, for some appropriate constant $c_*\lesssim1$,
\begin{equation}\label{eqdaqp9}
\dist_B(\Pi_\sharp \ha^n_{|\wt L_P}, c_* \ha^n_{|L_0}) \lesssim \Biggl(\sum_{S \in \DD:P \subset S \subset Q} \a(S)+\frac{\dist(0,L_Q)}{\ell(Q)}\Biggr)\, \ell(P)^{n+1},
\end{equation}
which implies that
$$
|A_3|\lesssim \left( \frac{\ell(Q)}{\ell(P)} \right)^{n/2} \Biggl(\sum_{S \in \DD:P \subset S \subset Q} \a(S)+\frac{\dist(0,L_Q)}{\ell(Q)}\Biggr)\, \ell(P)^{n}.
$$
Notice that the lemma is an immediate
consequence of the estimates we have for $A_1$, $A_2$ and $A_3$.

To conclude, it remains to prove the claim \rf{eqdaqp9}. This task requires some preliminary calculations and we defer it to Lemma \ref{disttilds}.
\end{proof}

\vv
Our next objective consists in comparing the measures $\Pi_\sharp \ha^n_{|\wt L_P}$ and $\ha^n_{|L_0}$ from
the preceding lemma.
To this end, we consider the map $\wt \Pi:= \Pi_{|\wt L_P \ra L_0}$. Abusing notation, identifying both $\wt L_P$ and $L_0$ with $\Rn$, we also denote by $\wt \Pi$ the corresponding mapping in $\Rn$, that is $\wt \Pi: \Rn \ra \Rn$. Then, writing $h=y_P^V$, for $y=(y_1,\ldots,y_n,h)$ we have
$$\wt \Pi_i (y)=y_i \sqrt{\frac{y_1^2+\dots+y_n^2+|h|^2}{y_1^2+\dots+y_n^2}}=y_i \sqrt{1+\frac{|h|^2}{y_1^2+\dots+y_n^2}},$$
for $i=1,\dots,n$. Hence, for $i,j=1,\dots,n$,
$$\partial_j \wt \Pi_i=\delta_{ij}\frac{|y|}{|y^H|}- \frac{|h|^2\,y_i\,y_j}{|y||y_H|^3}=\frac{|y|}{|y_H|} \left(\delta_{ij}-|h|^2\frac{y_i y_j}{|y|^2|y_H|^2} \right),$$
where $\delta_{ij}$ denotes Kronecker's delta. For $y \in P$,
\begin{equation*}
\begin{split}
|\partial_j\wt\Pi_i(y)-\partial_j \wt \Pi_i (y_P)|\leq \left|\frac{|y|}{|y^H|}-\frac{|y_P|}{|y_P^H|}\right|+|h|^2\left| \frac{y_i\,y_j}{|y|\,|y^H|}-\frac{{y_P}_i\,{y_P}_j}{|y_P|\,|{y_P}^H|} \right|.
\end{split}
\end{equation*}
Moreover,
\begin{equation*}
\begin{split}
\left|\frac{|y|}{|y^H|}-\frac{|y_P|}{|y_P^H|}\right|&\approx \left|\frac{|y|^2}{|y^H|^2}-\frac{|y_P|^2}{|y_P^H|^2}\right|=\left| \frac{|y|^2|y_P^H|^2-|y_P|^2|y^H|^2}{|y^H|^2\,|y_P^H|^2} \right|\\
&=\left| \frac{(|y^H|^2+|h|^2)|y_P^H|^2-(|y_P^H|^2+|h|^2)|y^H|^2}{|y^H|^2\,|y_P^H|^2} \right|\\
&=\left| \frac{|h|^2(|y_P^H|^2-|y^H|^2)}{|y^H|^2\,|y_P^H|^2} \right| \approx \frac{|h|^2\,|y|\,||y_P^H|-|y^H||}{|y|^4} \\
&\lesssim \frac{|h|^2\, \ell(P)}{r^3},
\end{split}
\end{equation*}
and in a similar manner we get 
$$|h|^2\left| \frac{y_i\,y_j}{|y|\,|y^H|}-\frac{{y_P}_i\,{y_P}_j}{|y_P|\,|{y_P}^H|} \right|\lesssim \frac{|h|^2\, \ell(P)}{r^3}.$$
Hence
\begin{equation}
\label{der}
|\partial_j\wt\Pi_i(y)-\partial_j \wt \Pi_i (y_P)| \lesssim \frac{|h|^2\, \ell(P)}{r^3}.
\end{equation}
Now we write
\begin{equation}
\label{jacpre}
\begin{split}
|J \wt\Pi (y)-J \wt\Pi (y_P)|&=\left|\sum_{\sigma} \sgn(\sigma) \prod_{j=1}^n \partial_j \wt \Pi_{\sigma(j)}(y)-\sum_{\sigma} \sgn(\sigma) \prod_{j=1}^n \partial_j \wt \Pi_{\sigma(j)}(y_P)\right|\\
&\leq c(n) \sup_{i,j}\bigl|\partial_j \wt \Pi_i(y)-\partial_j \wt\Pi_i(y_P)\bigr| \,\bigl(\sup_{i,j}|\partial_j \wt \Pi_i(y)|^{n-1}+\sup_{i,j}|\partial_j \wt \Pi_i(y_P)|^{n-1}\bigr) \\
&\lesssim \sup_{i,j}\bigl|\partial_j \wt \Pi_i(y)-\partial_j \wt\Pi_i(y_P)\bigr|,
\end{split}
\end{equation}
where the sum is computed over all permutations of $\{1,\dots,n\}$ and $\sgn (\sigma)$ denotes the signature of the permutation $\sigma$. Moreover, in the last inequality we used again that  $\|\nabla \Pi\|_\infty \lesssim 1$ on $B_P$ since $B_P$ lies far from the subspace ${\Pi^\perp}^{-1}(\{0\})$.

Therefore, by \eqref{jacpre} and \eqref{der},
\begin{equation}
\label{jac}
|J \wt\Pi (y)-J \wt\Pi (y_P)|\lesssim \frac{|h|^2\, \ell(P)}{r^3}\qquad\mbox{for $y \in P$}.
\end{equation}

\begin{lemma} 
\label{disttilds}
Let $B$ be a ball centered in $\Pi (P)$ with $\diam(B) \lesssim \ell(P)$. Then
$$\dist_B(\Pi_\sharp \ha^n_{|\wt L_P}, c_* \ha^n_{|L_0}) \lesssim \left(\sum_{S \in \DD:P \subset S \subset Q} \a(S)+\frac{\dist(0,L_Q)}{\ell(Q)}\right)\, \ell(P)^{n+1},$$
where $c_*=(J \wt \Pi(y_P))^{-1}$.
\end{lemma}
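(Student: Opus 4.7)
The plan is to use the change-of-variables formula for $\wt\Pi$ to rewrite the pushforward $\Pi_\sharp\ha^n_{|\wt L_P}$ in terms of a density against $\ha^n_{|L_0}$, and then control the deviation from $c_*\ha^n_{|L_0}$ via the pointwise Jacobian regularity supplied by \rf{jac}, together with a bound on the height $|h|=\dist(y_P,L_0)$ in terms of $\a$-numbers.

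First I would note that, since $B_P$ lies far from the singular locus ${\Pi^\perp}^{-1}(\{0\})$, the map $\wt\Pi:\wt L_P\to L_0$ is a smooth diffeomorphism on a neighborhood of $P\cap\wt L_P$ with Jacobian $J\wt\Pi$ bounded above and below by absolute constants. Change of variables then gives $d\Pi_\sharp\ha^n_{|\wt L_P}(z)=(J\wt\Pi(\wt\Pi^{-1}(z)))^{-1}\,d\ha^n_{|L_0}(z)$, so with $c_*=(J\wt\Pi(y_P))^{-1}$ the difference of the two measures equals $\bigl((J\wt\Pi\circ\wt\Pi^{-1})^{-1}-c_*\bigr)\,d\ha^n_{|L_0}$. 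For any $1$-Lipschitz $f$ with $\supp f\subset B$, this gives
$$\Bigl|\int f\,d\Pi_\sharp\ha^n_{|\wt L_P}-c_*\int f\,d\ha^n_{|L_0}\Bigr|\lesssim \|f\|_\infty\,\ha^n(B)\,\bigl\|(J\wt\Pi)^{-1}-(J\wt\Pi(y_P))^{-1}\bigr\|_{L^\infty(\wt\Pi^{-1}(B))}.$$
The lower bound on $J\wt\Pi$ will let me reduce the last factor to $\lesssim \sup_{\wt\Pi^{-1}(B)}|J\wt\Pi-J\wt\Pi(y_P)|$, which by \rf{jac} is $\lesssim |h|^2\ell(P)/r^3$ (using that $\wt\Pi^{-1}(B)$ has diameter $\lesssim\ell(P)$ and sits inside a neighborhood of $P$ where \rf{jac} applies). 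Combined with $\|f\|_\infty\lesssim \ell(P)$, $\ha^n(B)\lesssim \ell(P)^n$, and $r\approx\ell(Q)$, this produces
$$\Bigl|\int f\,d\Pi_\sharp\ha^n_{|\wt L_P}-c_*\int f\,d\ha^n_{|L_0}\Bigr|\lesssim \frac{|h|^2\,\ell(P)^{n+2}}{\ell(Q)^3}.$$

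Next I would bound $|h|=\dist(y_P,L_0)$. Since $L_0\parallel L_Q$ and $0\in L_0$, the two parallel planes are at distance $\dist(0,L_Q)$, so $|h|\leq \dist(y_P,L_Q)+\dist(0,L_Q)$. Combining $\dist(y_P,L_P)\lesssim \a(P)\ell(P)$ with the standard estimate $\dist_H(L_P\cap B_P,L_Q\cap B_P)\lesssim \sum_{P\subset S\subset Q}\a(S)\,\ell(P)$ from \cite[Lemma 5.2]{Tolsa-lms} yields $|h|\lesssim \sum_{P\subset S\subset Q}\a(S)\,\ell(P)+\dist(0,L_Q)$. I would also record the crude bound $|h|\lesssim \ell(Q)$, coming from $y_P\in B_Q$ and $L_Q\cap \tfrac12 B_Q\neq\emptyset$. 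Writing
$$\frac{|h|^2\,\ell(P)}{\ell(Q)^3}=\frac{|h|}{\ell(Q)}\cdot\frac{|h|\,\ell(P)}{\ell(Q)^2}\lesssim \sum_{S\in\DD:\,P\subset S\subset Q}\a(S)+\frac{\dist(0,L_Q)}{\ell(Q)},$$
where the crude bound handles the first factor while $\ell(P)\leq \ell(Q)$ together with the estimate on $|h|$ handles the second, and multiplying by $\ell(P)^{n+1}$ will deliver the claimed inequality.

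I expect the main obstacle to be rigorously justifying the change-of-variables step — specifically, verifying that $\wt\Pi$ really is a diffeomorphism on all of $\wt\Pi^{-1}(B)$ with uniform two-sided Jacobian bounds, so that both the algebraic manipulation of $(J\wt\Pi)^{-1}$ and the pointwise estimate \rf{jac} apply on this same region. Once that geometric setup is pinned down (which ought to follow from the smallness of $\a(1000Q)$ and the fact that $B_P$ stays away from the singular locus of $\Pi$), the remainder is a direct computation using bounds already established in the excerpt.
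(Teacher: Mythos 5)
Your proposal is correct and is essentially the paper's own argument: both compare $\Pi_\sharp\ha^n_{|\wt L_P}$ to $c_*\ha^n_{|L_0}$ by a change of variables for $\wt\Pi$, reduce to the Jacobian oscillation bound \rf{jac}, and close with a bound on $|h|=\dist(y_P,L_0)$ via the $\a$-numbers plus $\dist(0,L_Q)$; your reformulation as a density $(J\wt\Pi\circ\wt\Pi^{-1})^{-1}$ against $\ha^n_{|L_0}$ is the same computation with the substitution done in the opposite direction. The one imprecision is your cited Hausdorff-distance estimate $\dist_H(L_P\cap B_P,L_Q\cap B_P)\lesssim\sum_{P\subset S\subset Q}\a(S)\,\ell(P)$: since the positional offset between $L_P$ and $L_Q$ near $y_P$ accumulates over the intermediate scales, the correct telescoped bound (and what \cite[Remark~5.3]{Tolsa-lms} actually gives, as the paper uses) is $\sum_{P\subset S\subset Q}\a(S)\,\ell(S)$; this is harmless here because the estimate is only used after dividing by $\ell(Q)$, and $\ell(S)\leq\ell(Q)$. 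Your worry about the diffeomorphism is a non-issue: $\wt\Pi$ is given by an explicit smooth formula with Jacobian uniformly bounded above and below on the region in question, precisely because $\a(1000Q)\leq\delta^2$ forces $\wt L_P$ to be nearly parallel to $L_0$ and keeps $B_P$ away from the singular locus of $\Pi$.
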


\begin{proof} Let $f$ be $1$-Lipschitz with $\supp f \subset B$. Then, recalling that $\wt \sigma_P=c_P\ha^{n}_{|\wt L_P}$,
\begin{equation*}
\begin{split}
\left| \int f \,d (\Pi_\sharp \ha^n_{|\wt L_P})-c_*\int f\, d \ha^n_{|L_0} \right| &\approx \left|\frac{1}{c_*} \int f(\Pi (y))d \ha^n_{|\wt L_P}-\int f(y) d \ha^n_{|L_0}\right|\\
&= \left|\frac{1}{c_*}  \int_{\Rn} f(\wt \Pi (y)) dy-\int_{\Rn} f(y)dy\right|\\
&=\left| \int_{\Rn} f(\wt \Pi (y))\, J \wt \Pi(y_P) dy-\int_{\Rn} f(\wt \Pi(y)) \,J\wt \Pi(y)dy\right|,
\end{split}
\end{equation*}
where we changed variables in the last line. Now notice that $\supp f \circ \wt\Pi \subset B'$, where $B'$ is a ball concentric with $B$ such that $\diam(B) \lesssim \ell(P)$. In addition, since $\supp f \subset B$ and $\| \nabla f\|_\infty \leq 1$ we also get  $\|f\|_\infty \lesssim \ell(P)$. Hence, by \eqref{jac},
\begin{equation*}
\begin{split}
\left| \int f \,d (\Pi_\sharp \ha^n_{|\wt L_P})-c_*\int f\, d \ha^n_{|L_0} \right|& \lesssim \int_{\Rn} |f(\wt \Pi (y))||J \wt \Pi (y_P)- J \wt \Pi(y)|dy \\
& \lesssim \frac{|h|^2\, \ell(P)}{r^3} \int_{B'} \ell(P) dy 
\lesssim \frac{|h|\, \ell(P)^{n+1}}{r}.
\end{split}
\end{equation*}
Moreover, by \cite[Remark 5.3]{Tolsa-lms} and the choice of $y_P$,
$$|h|=\dist(y_P,L_0) \leq \dist(y_P,L_Q)+ \dist(L_0,L_Q)\lesssim \sum_{S \in \DD: P \subset S \subset Q} \a(S) \ell(S)+\dist(0,L_Q).$$
Hence
\begin{equation*}
\left| \int f\, d (\Pi_\sharp \ha^n_{|\wt L_P})-c_*\int f \,d \ha^n_{|L_0} \right| \lesssim \frac{\sum_{S \in \DD: P \subset S \subset Q} \a(S) \ell(Q)+\dist(0,L_Q)}{\ell(Q)}\, \ell(P)^{n+1},
\end{equation*}
and the lemma follows.
\end{proof}


We denote $$\wt \cG (x,r):= \{P(I)\}_{I \in \cG(x,r)}.$$
We need the following auxiliary result.

\begin{lemma}
\label{lemma4}
For every $a\geq1$ and every $S\in\DD$,
$$\sum_{P  \in \wt \cG(x,r):\, P \subset a\,S} \mu(P) \lesssim \mu (S),$$
with the implicit constant depending on $a$.
\end{lemma}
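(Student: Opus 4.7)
The plan is to organize the sum by dyadic scales of $P$ and bound each scale through a codimension-one count of cubes on the spheres $\partial B(x,r)$ and $\partial B(x,2r)$, using the alignment $L_Q\parallel\R^n$ in force throughout the estimate of $I_2$. For each integer $k$ with $2^{-k}\leq 1/100$, set
\[
\WW_k := \{P\in\wt\cG(x,r):\ \ell(P)\approx 2^{-k}r,\ P\subset aS\}.
\]
The inclusion $P\subset aS$ forces $\ell(P)\lesssim a\,\ell(S)$, so $k\geq k_0 := \lceil\log_2(r/(a\,\ell(S)))\rceil$, capped below by the minimal value consistent with $2^{-k}\leq 1/100$. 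Since the cubes in $\WW_k$ are dyadic at essentially a single scale, they are pairwise disjoint, and upper $n$-AD-regularity gives $M_k := \sum_{P\in\WW_k}\mu(P)\lesssim |\WW_k|\,(2^{-k}r)^n$. It therefore suffices to bound $|\WW_k|$.

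Each $P\in\WW_k$ comes from a cube $I\in\wt\DD(\R^n)$ at scale $2^{-k}$ whose $5$-dilate meets $\partial B_n(0,1)\cup\partial B_n(0,2)$ and whose cylinder $x+\Pi^{-1}(r\cdot 5I)$ contains a point of $\supp(\mu)\cap 3aS$. Because $\Pi$ preserves norms, such a point $y$ has $|y-x|\approx r$ or $\approx 2r$. The standing hypothesis $\alpha(1000Q)\leq\delta^2$ combined with $L_Q\parallel\R^n$ keeps $\supp(\mu)\cap 3aS$ close to $L_Q$, so $\Pi(y-x)\approx(y-x)^H$ lies in the region $\Pi(3aS-x)\cap\bigl(\partial B_n(0,r)\cup\partial B_n(0,2r)\bigr)$, whose $(n-1)$-dimensional surface area satisfies
\[
\Theta\lesssim \min(\ell(S),r)^{n-1}.
\]
Indeed, when $\ell(S)\geq r$ the relevant shell portion has area $\lesssim r^{n-1}$; when $\ell(S)<r$, writing $d_S:=\dist(x,S)\approx r$ for the shells to meet $aS$ at all, the cube $aS$ subtends solid angle $\lesssim(\ell(S)/d_S)^{n-1}$ at $x$, yielding area $\lesssim\ell(S)^{n-1}$. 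Covering this shell portion by dyadic $\R^n$-cubes of side $2^{-k}r$ gives $|\WW_k|\lesssim\Theta/(2^{-k}r)^{n-1}$, hence $M_k\lesssim\Theta\cdot 2^{-k}r$. Summing over $k\geq k_0$ and using $2^{-k_0}r\approx\min(a\ell(S),r)$,
\[
\sum_{P\in\wt\cG(x,r):\,P\subset aS}\mu(P)=\sum_{k\geq k_0}M_k\lesssim \Theta\cdot 2^{-k_0}r\lesssim \min(\ell(S),r)^{n-1}\min(a\ell(S),r)\lesssim \ell(S)^n\approx\mu(S),
\]
with implicit constant depending on $a$, as claimed.

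The main obstacle is the codimension-one surface-area bound $\Theta\lesssim\min(\ell(S),r)^{n-1}$ and the ensuing count $|\WW_k|\lesssim\Theta/(2^{-k}r)^{n-1}$. This rests on the radial, norm-preserving nature of $\Pi$ together with the alignment $L_Q\parallel\R^n$ and $\alpha(1000Q)\leq\delta^2$, which force $\supp(\mu)\cap 3aS$ into a slab nearly parallel to $\R^n$ whose $\Pi$-image meets the spheres $\partial B_n(0,r)\cup\partial B_n(0,2r)$ in a set of tightly controlled surface measure. A crude volumetric count of dyadic cubes inside $aS$ that ignores the shell concentration would produce an extra logarithmic factor in $r/\ell(S)$ and fail to deliver the clean bound by $\mu(S)$.
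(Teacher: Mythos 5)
Your proof is correct and follows the same underlying geometric idea as the paper's. The paper's proof is extremely terse: it just transfers the sum over $P$ to the corresponding dyadic cubes $r\cdot I$ in $\R^n$ (satisfying $r\cdot I\subset a'\,\Pi(B_S)$ and $r\cdot 5I\cap(\partial B_n(0,r)\cup\partial B_n(0,2r))\neq\varnothing$) and then asserts the bound $\lesssim\ell(S)^n$ outright, leaving the reader to carry out precisely the scale-by-scale, codimension-one count that you have spelled out, including your observation that $\alpha(1000Q)\leq\delta^2$ together with $L_Q\parallel\R^n$ is what keeps the circular projection $\Pi$ bi-Lipschitz on the relevant portion of $\supp(\mu)$ so that the shell portion really has $(n-1)$-content $\lesssim\min(\ell(S),r)^{n-1}$.
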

\begin{proof} We assume $x=0$ for simplicity.
Notice that for every $P  \in \wt \cG(0,r)$ such that $ P \subset a\,S$ there exists some $I \in \cG(0,r)$ such that $r \ell(I) \approx \ell(P)$ and $r\cdot I \subset a' \, \Pi (B_S)$ where $a'$ only depends on $a$. Therefore
\begin{equation*}
\begin{split}
&\sum_{P  \in \wt \cG(0,r);\, P \subset a\,S} \mu(P) \lesssim \sum\{\ell(r\cdot I)^n: I \in \cG (0,r); \, 
r\cdot I \subset a' \Pi (B_S)\}\\
&\lesssim \sum \{\ell(r \cdot I)^n: I \in \wt\DD(\Rn);\,r\cdot I \subset a' \Pi (B_S);  r\cdot5I \cap (\partial B_n(0,r) \cup \partial B_n(0,2r)) 
\neq \varnothing \} \\
&\\
&\leq c(a)\,\ell(S)^n \approx c(a)\,\mu(S).
\end{split}
\end{equation*}
\end{proof}


We can now estimate the term $G_{r,1}(x)$ in \eqref{gt}. 

\begin{lemma}\label{lemfaqw1}
We have
\begin{equation}
\label{gest2}
|G_{r,1}(x)| \lesssim  \sum_{P  \in \wt \cG(x,r)} \left( \a(aP)+\frac{d(x,L_Q)}{\ell(Q)} \right)\frac{\mu(P)}{\mu(Q)},
\end{equation}
for some absolute constant $a\geq1$.
\end{lemma}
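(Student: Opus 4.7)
The plan is to substitute the pointwise estimate of Lemma \ref{lemgr100} into the definition \eqref{gt} of $G_{r,1}(x)$, combine it with the wavelet coefficient bound of Lemma \ref{boundaries}(c), and then perform a change of index from wavelets $I \in \cG(x,r)$ to their associated cubes $P=P(I) \in \wt\cG(x,r)$.

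First, for each $I \in \cG(x,r)$, Lemma \ref{key} produces a cube $P=P(I) \in \wt\cG(x,r)$ with $\ell(P) \approx r\,\ell(I) \approx \ell(Q)\,\ell(I)$, and Lemma \ref{lemgr100} bounds
\[
\left|\int \psi_I\!\left(\tfrac{\Pi(y-x)}{r}\right) d\mu(y)\right| \lesssim \left(\tfrac{\ell(Q)}{\ell(P)}\right)^{n/2}\left(\sum_{S:\,P \subset S \subset Q}\a(S)+\tfrac{\dist(x,L_Q)}{\ell(Q)}\right)\ell(P)^n.
\]
Since $\ell(I)<1/100$, Lemma \ref{boundaries}(c) yields $|a_I| \lesssim \ell(I)^{n/2}$. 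Together with $r \approx \ell(Q)$, the size factors simplify:
\[
\frac{|a_I|}{r^n}\left(\tfrac{\ell(Q)}{\ell(P)}\right)^{n/2}\ell(P)^n \,\lesssim\, \left(\tfrac{\ell(P)}{\ell(Q)}\right)^{n/2}\!\left(\tfrac{\ell(Q)}{\ell(P)}\right)^{n/2}\frac{\ell(P)^n}{\ell(Q)^n} \,\approx\, \frac{\mu(P)}{\mu(Q)},
\]
by $n$-AD-regularity.

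Second, the map $I \mapsto P(I)$ is bounded-to-one: for a fixed $P$, only a uniformly bounded number of wavelets $I$ of side length $\ell(I) \approx \ell(P)/\ell(Q)$ with $5I\cap(\partial B_n(0,1)\cup \partial B_n(0,2)) \neq \varnothing$ and $(x+\Pi^{-1}(r\cdot 5I))\cap\supp(\mu) \neq\varnothing$ can produce this $P$. Re-indexing the sum over $I$ by the corresponding $P \in \wt\cG(x,r)$ then gives
\[
|G_{r,1}(x)| \,\lesssim\, \sum_{P\in \wt\cG(x,r)} \left(\sum_{S:\,P\subset S\subset Q}\a(S) + \tfrac{\dist(x,L_Q)}{\ell(Q)}\right)\frac{\mu(P)}{\mu(Q)}.
\]

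The main obstacle is the last reduction, replacing the internal chain $\sum_{S:\,P\subset S\subset Q}\a(S)$ by the single quantity $\a(aP)$ for an absolute constant $a$. For this I would invoke the construction of $\cG_0$: the cube $P \in \wt\cG(x,r)$ lies just outside the stopping region, so a near-ancestor $P' \supset P$ with $\ell(P') \approx \ell(P)$ belongs to $\cG_0$ and hence satisfies $\sum_{S:\,P' \subset S \subset 1000\,Q}\a(100\,S) \leq \delta$; combined with $\a(1000\,Q) \leq \delta^2$ and the quasi-monotonicity of $\a$ under moderate enlargement, the whole chain collapses into $\a(aP)$ for some large but fixed $a$, with the remaining loss absorbed in the subsequent Cauchy--Schwarz and Carleson arguments based on Theorem \ref{teotol}.
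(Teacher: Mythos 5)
Your outline is on track up to the re-indexed bound
\[
|G_{r,1}(x)| \lesssim \sum_{P\in\wt\cG(x,r)} \left(\sum_{S\in\DD:\,P\subset S\subset Q}\a(S)+\frac{\dist(x,L_Q)}{\ell(Q)}\right)\frac{\mu(P)}{\mu(Q)},
\]
and the computation turning $\frac{|a_I|}{r^n}\bigl(\tfrac{\ell(Q)}{\ell(P)}\bigr)^{n/2}\ell(P)^n$ into $\mu(P)/\mu(Q)$ as well as the bounded-to-one reading of $I\mapsto P(I)$ are both correct. The final reduction, however, is where your argument breaks. You propose to collapse the chain $\sum_{S:\,P\subset S\subset Q}\a(S)$ pointwise into $\a(aP)$ using the stopping construction of $\cG_0$ and ``quasi-monotonicity of $\a$.'' No such pointwise collapse holds: the stopping condition only gives $\sum_{S:\,P\subset S\subset Q}\a(S)\lesssim\delta$, while $\a(aP)$ for an individual cube can be arbitrarily small (even zero), so an inequality $\sum_{S:\,P\subset S\subset Q}\a(S)\lesssim\a(aP)$ is simply false. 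The ``remaining loss'' you defer to Cauchy--Schwarz and Theorem~\ref{teotol} is not a lower-order remainder; it is the whole estimate.

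The paper's proof takes a different route and relies essentially on Lemma~\ref{lemma4}, which your proposal never invokes. The key observation is that for every $S$ in the chain $P\subset S\subset Q$ one can find $\wt S\in\wt\cG(x,r)$ with $\ell(\wt S)\approx\ell(S)$ and $S\subset a\wt S$: since $P\in\wt\cG(x,r)$, pick $I'\in\cG(x,r)$ of the right scale with $\Pi(S)\cap r\cdot I'\neq\varnothing$ and take $\wt S=P(I')$. Substituting $\a(S)\lesssim\a(a\wt S)$ and then swapping the order of summation gives
\[
\sum_{P\in\wt\cG(x,r)}\sum_{S:\,P\subset S\subset Q}\a(S)\,\frac{\mu(P)}{\mu(Q)}
\lesssim \sum_{\wt S\in\wt\cG(x,r):\,\wt S\subset aQ}\a(a\wt S)\sum_{P\in\wt\cG(x,r):\,P\subset a\wt S}\frac{\mu(P)}{\mu(Q)},
\]
and Lemma~\ref{lemma4} bounds the inner sum by $\mu(\wt S)/\mu(Q)$, yielding precisely $\sum_{\wt S\in\wt\cG(x,r)}\a(a\wt S)\,\mu(\wt S)/\mu(Q)$. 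Thus the mechanism is a Fubini-type rearrangement across all of $\wt\cG(x,r)$ --- each $\a(a\wt S)$ ends up attached to its own cube $\wt S$ --- not a collapse of the chain over a fixed $P$. Without that rearrangement, and without Lemma~\ref{lemma4} to control the multiplicity $\sum_{P\subset a\wt S}\mu(P)$, the proof does not close.
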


\begin{proof}
Using \eqref{gest} and (c) from Lemma \ref{boundaries}, 
\begin{equation}
\begin{split}
|G_{r,1}(0)|&=\left| \frac1{r^n} \sum_{I \in \cG (0,r)} a_I\,
\int \psi_I\left(\frac{\Pi( y)}r\right)d\mu(y) \right| \\
&\lesssim \frac 1{\ell(Q)^n} \sum_{I \in \cG (0,r)} \left(\sum_{S \in \DD: P(I) \subset S \subset Q}\a(S) +\frac{\dist(0,L_Q)}{\ell(Q)}  \right) \ell(P(I))^n.
\end{split}
\end{equation}
 Notice that by the definition of $\cG(0,r)$, for every $I \in \cG(0,r)$
$$\# \{ P \in \DD: P=P(I) \} \lesssim 1.$$
Then
\begin{equation}
\label{4}
|G_{r,1}(0)| \lesssim \sum_{P\in \wt \cG(0,r)} \sum_{S \in \DD: P \subset S \subset Q}\a(S)\, \frac{\ell(P)^n}{\ell(Q)^n}+\sum_{P \in \wt\cG(0,r)}\frac{\dist(0,L_Q)}{\ell(Q)}\frac{\ell(P)^n}{\ell(Q)^n}.
\end{equation}
If $S\in \DD$ is such that $P \subset S \subset Q$, then there exists $\wt S \in \wt \cG (0,r)$, with $\ell(\wt S) \approx \ell(S)$, such that $S \subset a \wt S$ for some $a \geq 1$. In fact, since $P \in \wt \GG (0,r)$ we can find $I' \in \GG(0,r)$ with $\ell(r\cdot I')\approx\ell(S)$ such that $\Pi (S) \cap r\cdot I' \neq \varnothing$. Therefore we can take $\wt S:=P(I')$. 

Hence for $P \in \wt \cG (0,r)$,
\begin{equation}
\sum_{S \in \DD: P \subset S \subset Q}\a(S) \lesssim \sum_{\substack{ S \in \wt\cG(0,r):\\ P \subset a\, S \subset a\,Q}}\a(a\,S).
\end{equation}
Thus, using also Lemma \ref{lemma4},
\begin{equation}
\begin{split}
\label{4a}
\sum_{P \in \wt \cG(0,r)} \sum_{S \in \DD: P \subset S \subset Q}\a(S) \,\frac{\ell(P)^n}{\ell(Q)^n} &\lesssim \sum_{P  \in \wt \cG(0,r)} \sum_{\substack{ S \in \wt\cG(0,r):\\ P \subset a\, S \subset a\,Q}}\a(a\,S) \frac{\ell(P)^n}{\ell(Q)^n}\\
&\approx\sum_{\substack{S  \in \wt \cG(0,r):\,S \subset a\,Q}}\a(a\,S) \sum_{P  \in \wt \cG(0,r):\, P \subset a\,S} \frac{\mu(P)}{\mu (Q)}
\\
&\lesssim\sum_{\substack{S  \in \wt \cG(0,r):\,S \subset a\,Q}}\a(a\,S) \,\frac{\mu(S)}{\mu (Q)}.
\end{split}
\end{equation}
Together with \rf{4}, this yields \rf{gest2}.
\end{proof}



Now we will deal with the term $I_{21}$ in \rf{i1i2in}. 

\begin{lemma}\label{lemi21}
We have
$$I_{21}\lesssim\mu(R).$$
\end{lemma}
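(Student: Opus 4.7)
The plan is to adapt the strategy of Lemma \ref{lemi1}: turn the pointwise estimate of Lemma \ref{lemfaqw1} into a square bound, integrate it over $Q\times[\ell(Q),2\ell(Q)]$, and reduce to the Carleson packing inequality of Theorem \ref{teotol}. Using $\sum_{P\in \wt\cG(x,r)}\mu(P)\lesssim \mu(Q)$ (which is Lemma \ref{lemma4} with $S=Q$, $a=1000$) together with Cauchy--Schwarz applied with weights $\mu(P)/\mu(Q)$, Lemma \ref{lemfaqw1} upgrades to
\[
|G_{r,1}(x)|^2\lesssim \sum_{P\in \wt\cG(x,r)}\alpha(aP)^2\,\frac{\mu(P)}{\mu(Q)}+\left(\frac{d(x,L_Q)}{\ell(Q)}\right)^2.
\]
The contribution to $I_{21}$ of the second summand is $\lesssim \mu(R)$ via the estimate $\sum_{Q\subset R}\int_Q(\dist(x,L_Q)/\ell(Q))^2\,d\mu(x)\lesssim \mu(R)$, which follows from \cite[Lemmas 5.2 and 5.4]{Tolsa-lms} and Theorem \ref{teotol}, exactly as for the term $S_1$ in Lemma \ref{lemi1}. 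All the remaining work concerns the $\alpha$-term.

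The key geometric input is the bound
\[
A(P,Q):=\int_Q\int_{\ell(Q)}^{2\ell(Q)}\chi_{\{P\in \wt\cG(x,r)\}}\,\frac{dr}{\ell(Q)}\,d\mu(x)\lesssim \mu(P)\qquad\text{for every }P\subset 1000Q.
\]
The assumption $\alpha(1000Q)\leq \delta^2$ makes $\mu\rest 1000Q$ very close to $c_Q\HH^n_{|L_Q}$, and one may take $L_Q$ parallel to $\R^n$. On $L_Q$ the spherical projection $\Pi$ acts as the identity, so for $x,y$ lying on $L_Q$ the condition $\Pi(y-x)\in r\cdot 5I$ with $I\in\cG(x,r)$ and $P(I)=P$ reduces to $x\in y-r\cdot 5I$, an $n$-cube of side $\approx r\ell(I)\approx \ell(P)$. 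Taking the union over $y\in 3P\cap \supp(\mu)$, a set of diameter $\lesssim \ell(P)$, keeps the admissible set of $x$ within $\mu$-measure $\lesssim \ell(P)^n\approx \mu(P)$ at each fixed $r$, and the $r$-integration produces only an $O(1)$ factor after the normalisation $dr/\ell(Q)$.

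With this bound in hand, Fubini yields
\[
I_{21}\lesssim \mu(R)+\sum_{\substack{Q\subset R\\ \alpha(1000Q)\leq \delta^2}}\sum_{P\subset 1000Q}\alpha(aP)^2\,\frac{\mu(P)^2}{\mu(Q)}.
\]
Swapping the two sums, for each fixed $P$ the cubes $Q$ with $P\subset 1000Q$ at scale $\ell(Q)=2^k\ell(P)$ are $O(1)$ in number and satisfy $1/\mu(Q)\approx 2^{-kn}/\mu(P)$, so $\sum_{Q\supset P/c}1/\mu(Q)\lesssim 1/\mu(P)$. Consequently the double sum is bounded by $\sum_P \alpha(aP)^2\mu(P)\lesssim \mu(R)$ by Theorem \ref{teotol}, which closes the argument.

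The main obstacle is the geometric bound $A(P,Q)\lesssim \mu(P)$. Heuristically it is a tautology once $\mu\rest 1000Q$ is replaced by the flat $c_Q\HH^n_{|L_Q}$, but $\mu$ is only $\delta^2$-close to such a flat measure and the spherical projection $\Pi$ is nonlinear. One therefore has to quantify how points of $\supp(\mu)$ lying up to $O(\delta^2\ell(Q))$ off $L_Q$ affect the cross-section $\{x:\Pi(y-x)\in r\cdot 5I\}$; this mirrors, and will borrow from, the Jacobian analysis already carried out in Lemma \ref{disttilds}.
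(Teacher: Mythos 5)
Your overall strategy mirrors the paper's: use Lemma \ref{lemfaqw1} and Cauchy--Schwarz for a pointwise square bound, handle the $\dist(x,L_Q)/\ell(Q)$ contribution with the standard $\beta$-type packing, and reduce the $\alpha$-term to Theorem~\ref{teotol} via Fubini. However, your key geometric claim
\[
A(P,Q)=\int_Q\int_{\ell(Q)}^{2\ell(Q)}\chi_{\{P\in \wt\cG(x,r)\}}\,\frac{dr}{\ell(Q)}\,d\mu(x)\lesssim \mu(P)
\]
is \emph{false} for $n\geq 2$, and the heuristic you give for it contains the gap. You fix a single cube $I$ and take the union over $y\in 3P$, obtaining a set of diameter $\lesssim\ell(P)$. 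But the cube $I$ with $P(I)=P$ is not fixed: as $x$ moves, a \emph{different} boundary cube $I$ (still of scale $\approx\ell(P)/r$, still meeting $\partial B_n(0,1)\cup\partial B_n(0,2)$) may witness $P\in\wt\cG(x,r)$. Taking the union over all such $I$ sweeps out an annular region $\{x:\ ||y-x|-r|\lesssim\ell(P)\ \text{or}\ ||y-x|-2r|\lesssim\ell(P)\}$ intersected with $\supp\mu$, whose $\mu$-measure is $\approx\ell(P)\,\ell(Q)^{n-1}$, not $\ell(P)^n$. Already for flat $\mu=\HH^n|_{L_Q}$ and $n\geq 2$ this is strictly larger than $\mu(P)$, so the claimed bound cannot hold.

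The correct (and still sufficient) bound is $A(P,Q)\lesssim\ell(P)\,\mu(Q)/\ell(Q)$, and it is exactly what the paper's argument produces: since $P\in\wt\cG(x,r)$ forces $cB_P\cap(\partial B(x,r)\cup\partial B(x,2r))\neq\varnothing$, for each fixed $x$ the set of admissible $r\in[\ell(Q),2\ell(Q)]$ has length $\lesssim\ell(P)$, and integrating $d\mu(x)$ over $Q$ then gives the extra factor $\mu(Q)$. Plugging this into your Fubini computation, one gets
\[
\sum_Q\sum_{P\subset a''Q}\alpha(aP)^2\,\frac{\mu(P)}{\mu(Q)}\cdot\frac{\ell(P)\mu(Q)}{\ell(Q)}
=\sum_Q\sum_{P}\alpha(aP)^2\,\mu(P)\,\frac{\ell(P)}{\ell(Q)},
\]
and after swapping the order of summation, $\sum_{Q:\,P\subset a''Q}\ell(P)/\ell(Q)\lesssim 1$, so the whole thing is $\lesssim\sum_P\alpha(aP)^2\mu(P)\lesssim\mu(R)$ by Theorem~\ref{teotol}. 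So your final bookkeeping does go through, but only once the geometric estimate is corrected from $\mu(P)$ to $\ell(P)\mu(Q)/\ell(Q)$; as stated, the proposal rests on an incorrect bound.
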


\begin{proof}0
By Lemmas \ref{lemma4} and \ref{lemfaqw1},
and Cauchy-Schwarz,
\begin{equation*}
\begin{split}
|G_{r,1}(x)|^2& \lesssim \left(\sum_{P  \in \wt \cG(x,r)} \left( \a(a P)+\frac{d(x,L_Q)}{\ell(Q)} \right)^2\frac{\mu(P)}{\mu(Q)} \right)\left( \sum_{P  \in \wt \cG(x,r)} \frac{\mu(P)}{\mu(Q)} \right)\\
&\lesssim \sum_{P  \in \wt \cG(x,r)} \left( \a(a P)+\frac{d(x,L_Q)}{\ell(Q)} \right)^2\frac{\mu(P)}{\mu(Q)}.
\end{split}
\end{equation*}
Then
\begin{equation*}
\begin{split}
I_{21}&\lesssim \sum_{Q\in \DD(R)} \frac1{\ell(Q)^{n+1}} \int_Q \int_{\ell(Q)}^{2 \ell(Q)} \sum_{P  \in \wt \cG(x,r)} \left( \a(a P)^2+\frac{d(x,L_Q)^2}{\ell(Q)^2} \right)\mu(P)\, dr d\mu(x)\\
&\lesssim \sum_{Q\in \DD(R)} \frac1{\ell(Q)^{n+1}} \int_Q \int_{\ell(Q)}^{2 \ell(Q)} \!\!\!\sum_{ \substack{P  \subset a''Q:\\
cB_P \cap  (\partial B(x,r) \cup \partial B(x,2r)) \neq \varnothing}} \!\!\left( \a(a P)^2+\frac{d(x,L_Q)^2}{\ell(Q)^2} \right)\mu(P)\, dr d\mu(x).
\end{split}
\end{equation*}
By Fubini,
\begin{equation*}
\begin{split}
\int_{\ell(Q)}^{2 \ell(Q)} &\sum_{ \substack{P  \subset a''Q:\\
  cB_P \cap  \partial B(x,r) \neq \varnothing}} \left( \a(a P)^2+\frac{d(x,L_Q)^2}{\ell(Q)^2} \right)\mu(P)\, dr\\
&= \sum_{ \substack{P  \subset a''Q}} \left( \a(a P)^2+\frac{d(x,L_Q)^2}{\ell(Q)^2} \right)\mu(P) \int_{\{r:\, cB_P \cap  \partial B(x,r) \neq \varnothing\}}dr\\
&\lesssim \sum_{ \substack{P  \subset a''Q}} \left( \a(a P)^2+\frac{d(x,L_Q)^2}{\ell(Q)^2} \right)\mu(P) \ell(P),
\end{split}
\end{equation*}
where we used the fact that if $r>0$ is such that $ cB_P \cap  \partial B(x,r) \neq \varnothing$ then $$|x_P|-c\, \ell(P)\leq r \leq |x_P|+c\, \ell(P),$$ where $x_P$ is the center of $B_P$. Therefore, 
\begin{equation}
\begin{split}
I_{21}&\lesssim \sum_{Q\in \DD(R)}\frac 1{\ell(Q)^n} \int_Q \sum_{ \substack{P  \subset a''Q}} \left( \a(a P)^2+\frac{d(x,L_Q)^2}{\ell(Q)^2} \right) \frac{\ell(P)}{\ell(Q)} \mu(P) d \mu (x)\\
&\lesssim \sum_{Q\in \DD(R)} \sum_{ \substack{P  \subset a''Q}}\a(a P)^2  \frac{\ell(P)}{\ell(Q)} \,\mu (P)\\
&\quad\quad \quad +\sum_{Q\in \DD(R)} \frac 1{\ell(Q)^n} \int_{Q}\frac{d(x,L_Q)^2}{\ell(Q)^2} d \mu(x) \sum_{P\in \DD: P \subset a''Q} \frac{\ell(P)}{\ell(Q)} \,\mu(P)\\
&\lesssim \sum_{P\in \DD: P \subset a''R} \a(a P)^2 \mu (P) \sum_{Q\in \DD: a''Q \supset P}\frac{\ell(P)}{\ell(Q)}+ \sum_{Q\in \DD(R)}  \int_{Q}\frac{d(x,L_Q)^2}{\ell(Q)^2} d \mu(x) \\
&\lesssim \mu (R).
\end{split} 
\end{equation}
\end{proof}

Finally we turn our attention to $I_{22}$. Recall 
that
$$I_{22} = \sum_{\substack{Q \in \DD : Q \subset R, \\ \alpha (1000 Q) \leq \de^2}} \int_Q \int_{\ell(Q)}^{2\ell(Q)} |G_{r,2}(x)|^2\,\frac{dr}{\ell(Q)}\,d\mu(x).$$
For $x\in \supp (\mu)$ and $r>0$ set 
$$f_{x,r}(y)=\sum_{I \in \cT (x,r)} \sum_{J\in \wt\DD(\Rn):\, J \subset I} a_J\,
\psi_J\left(\frac{\Pi( y-x)}r\right),$$
so that
$$G_{r,2}(x) = \frac1{r^n}\int f_{x,r}(y)\,d\mu(y).$$

\begin{lemma}
\label{lemma5}
The functions $f_{x,r}$ satisfy
\begin{itemize}
\item $\supp f_{x,r} \subset \bigcup_{I \in \cT (x,r)} 3 P(\hat{I})$, where $\hat{I}$ is the father of $I$,
\medskip
\item $\|f_{x,r}\|_\infty \lesssim 1$.
\end{itemize}
\end{lemma}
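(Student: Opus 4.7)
The plan is to split the analysis of $f_{x,r}$ into the contribution $g_I(z) := \sum_{J \in \wt\DD(\Rn):\,J\subset I} a_J \psi_J(z)$ of each cube $I \in \cT(x,r)$, so that $f_{x,r}(y) = \sum_{I \in \cT(x,r)} g_I(\Pi(y-x)/r)$, and to treat the two bullet points separately. Throughout I would use the two structural ingredients already available: the wavelet properties (in particular Lemma \ref{boundaries}) of the expansion of $\wt h = \chi_{B_n(0,1)} - 2^{-n}\chi_{B_n(0,2)}$, and the stopping-time / maximality properties of $\cF_0$.

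For the support claim, the elementary observation is that $\supp \psi_J \subset 5J \subset 5I$ whenever $J \subset I$, so $g_I$ is supported in $5I$ and the contribution of $I$ to $f_{x,r}$ is supported in $x+\Pi^{-1}(r\cdot 5I)$. To upgrade to $3P(\hat I)$, I would rerun the proof of Lemma \ref{key} with $\hat I$ in place of $I$: the key hypothesis there is exactly $\hat I \notin \cF_0$, which holds because $I$ is maximal in $\cF$, so $\hat I \notin \cF$ forces $d(z) \lesssim r\ell(\hat I)$ for some $z \in r\cdot \hat I$, and the rest of the argument of Lemma \ref{key} applies verbatim to produce $P(\hat I) \in \DD$ with $\ell(P(\hat I)) \approx r\ell(I)$ and $\supp(\mu)\cap(x+\Pi^{-1}(r\cdot 5\hat I))\subset 3P(\hat I)$. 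Since $5I \subset 5\hat I$, the inclusion follows after taking a union over $I\in\cT(x,r)$.

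For the $L^\infty$ bound, I would prove two estimates:
(a) for every $I$ and $z$, $|g_I(z)|\lesssim 1$; and
(b) for fixed $z$, only $O(1)$ cubes $I \in \cT(x,r)$ satisfy $z \in 5I$.
Item (b) is easy: the cubes of $\cF_0$ are pairwise disjoint, and by \rf{notes3} any two $I,J \in \cF_0$ with $z \in 5I \cap 5J$ satisfy $20I\cap 20J\neq\emptyset$, hence $\ell(I)\approx \ell(J)$; at each scale at most boundedly many disjoint cubes can have $z\in 5I$. Item (a) is the heart of the matter. Direct estimation using $|a_J|\lesssim \ell(J)^{n/2}$ from Lemma \ref{boundaries}(c) and $\|\psi_J\|_\infty \lesssim \ell(J)^{-n/2}$ is too crude: because $a_J$ can be nonzero for $J$ at every dyadic scale whose $5J$ crosses $\partial B_n(0,1)\cup \partial B_n(0,2)$, the termwise sum diverges logarithmically. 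The correct argument exploits the cancellation of the Daubechies wavelet series: $g_I = S^I \wt h$, where $S^I$ is the partial-sum operator onto the span of $\{\psi_J:J\subset I\}$, and because Daubechies scaling-function projections $P_{V_j}$ are uniformly bounded on $L^\infty$, one has $g_I(z) = \wt h(z)-P_{V_I}\wt h(z)+(\text{$O(1)$ boundary corrections near $\partial I$})$, yielding $|g_I(z)|\lesssim \|\wt h\|_\infty \lesssim 1$.

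Combining (a) and (b) gives $|f_{x,r}(y)|\leq \sum_{I\in\cT(x,r):\,\Pi(y-x)/r\in 5I} |g_I(\Pi(y-x)/r)|\lesssim 1$, which is the second bullet. The main obstacle, as indicated, is not the support statement (which follows rather mechanically from Lemma \ref{key} applied to parents) but the uniform pointwise bound on $g_I$: one genuinely needs to know that localized wavelet partial sums of bounded functions remain bounded, rather than to estimate each wavelet coefficient individually.
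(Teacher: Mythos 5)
Your argument for the support bullet is essentially the paper's: apply Lemma~\ref{key} to $\hat I$, using that $\hat I\notin\cF_0$ by the maximality of $I\in\cF_0$ (so $\hat I$ is not contained in any cube of $\cF_0$ and $\hat I\in\cG(x,r)$), and then use $5I\subset5\hat I$.

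For the $L^\infty$ bound your plan (a)$+$(b) is a genuinely different decomposition, and (a) is where it breaks. Your (b) is fine and is the same observation (via \rf{notes3}) the paper uses. But your (a) asserts a pointwise bound $|g_I(z)|\lesssim1$ on \emph{each} block separately, and the justification offered does not establish it. Writing $g_I(z)=\bigl(\wt h(z)-P_{V_{\ell(I)}}\wt h(z)\bigr)-\sum_{J\not\subset I,\,\ell(J)\le\ell(I),\,z\in 5J}a_J\psi_J(z)$, the first piece is indeed $O(1)$ by uniform $L^\infty$-boundedness of the scaling projections. But the second piece, which you dismiss as ``$O(1)$ boundary corrections near $\partial I$'', has nonzero terms at \emph{every} dyadic scale from $\ell(I)$ down to roughly $\dist(z,\partial I)$ whenever $\partial I$ passes near $\partial B_n(0,1)\cup\partial B_n(0,2)$; each term is only $O(1)$ by Lemma~\ref{boundaries}(c), so the naive bound is $\log\bigl(\ell(I)/\dist(z,\partial I)\bigr)$, which is unbounded as $z\to\partial I$. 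Nothing you invoke produces cancellation across these scales, and indeed the operator $S^I$ (projection onto $\operatorname{span}\{\psi_J:J\subset I\}$) is a dyadic martingale transform; such operators map $L^\infty$ to BMO but are not uniformly bounded on $L^\infty$, so (a) is not a safe claim. The paper never tries to bound each $g_I$. Instead, for $z\in I_0\in\cF_0$ with $\ell(I_0)=2^{-k}$, it compares $\wt f(z)=\sum_{I\in\cF_0}g_I(z)$ to the matched uniform-scale reference $\sum_{I\in\DD_k}g_I(z)=\wt h(z)-\sum_{I\in\DD_k}\beta_I\phi_I(z)$, which is $O(1)$ via the scaling-function projection, and then uses \rf{notes3} to show the discrepancy between the $\cF_0$-tiling and the $\DD_k$-tiling near $I_0$ involves only $O(1)$ wavelet terms $a_J\psi_J(z)$, each $O(1)$. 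The cancellation that your (a) needs across scales is obtained automatically by choosing the reference truncation scale $k$ to match $I_0$, rather than by analyzing one $g_I$ in isolation. To salvage your (a)$+$(b) structure you would have to prove uniform $L^\infty$-boundedness of $S^I$, which is a substantive claim about the wavelet projection kernel that does not follow from boundedness of $P_{V_j}$ alone.
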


\begin{proof}
We assume again that $x=0$. Notice that $\supp f_{x,r} \subset \Pi^{-1}(r\cdot5 I) \cap \supp(\mu)$ and since $I \in \cF_0$, we have $\hat{I} \in \cG(x,r)$. Therefore by Lemma \ref{key}, 
$\Pi^{-1}(r\cdot5 I) \cap \supp(\mu) \subset 3 P(\hat{I})$.

We will now show that $\|f_{x,r}\|_\infty \lesssim 1$. Recalling \eqref{notes3} if $I, J \in \cF_0$ and $20 I \cap 20 J \neq \varnothing$, then $\ell(I) \approx \ell(J)$. If $I \in \cF_0  \setminus \cT(x,r)$ or $I \subset J$ for some $J \in \cF_0  \setminus \cT(x,r)$, then by Lemma \ref{boundaries} $a_I=0$. Therefore,
$$f_{x,r}(y)= \sum_{I \in \cF_0} \sum_{J\in \wt\DD(\Rn):\, J \subset I}a_J \psi_J \left( \frac{\Pi(y)}{r}\right).$$

We now consider the function 
$$\wt f(z)=\sum_{I \in \cF_0} \sum_{J \subset I}a_J \psi_J(z).$$
The second assertion in the lemma follows after checking that $\|\wt f\|_\infty \lesssim 1$. To this end, recall that by \eqref{hti}, for any $k \in \Z$, we have
$\wt h= \sum_{I \in \wt\DD (\Rn)} a_I \psi_I$. We can also write
\begin{equation}
\label{5notes}
\wt h (z)= \sum_{I \in \wt \DD_k(\Rn)} \sum_{J \subset I} a_J \psi_J (z)+ \sum_{I \in \DD_k (\Rn)} \beta_I \phi_I(z),
\end{equation}
where $\beta_I= \langle \wt h, \phi_I \rangle$ and the functions $\phi_I$ satisfy
\begin{itemize}
\item $\supp \phi_I \subset 7I$,
\medskip
\item $\|\phi_I\|_\infty \lesssim  \frac 1{\ell(I)^{n/2}}$,
\medskip
\item $\|\nabla \phi_I\|_\infty \lesssim  \frac 1{\ell(I)^{n/2+1}}$,
\medskip
\item $\|\phi_I\|_2 =1$.
\end{itemize}
See \cite[Theorem 7.9]{mal}. We note that $\supp \psi_I \subset 5I$ and $\supp \phi_I \subset 7I$ since we are taking Daubechies wavelets with $3$ vanishing moments, see \cite[p.\ 250]{mal}.  

Now let $z \in I_0$ for some $I_0 \in\cF_0$ with $\ell(I_0)=2^{-k}$.  Notice that
\begin{equation}
\begin{split}
\label{bibound}
\|\beta_I \phi_I\|_\infty &\lesssim |\beta_I| \ell(I)^{-n/2} \leq \int |\wt h(y) \, \phi_I(y)|dy \,\,\ell(I)^{-n/2} \\
\\& \lesssim \| \phi_I\|_1 \,\ell(I)^{-n/2} \lesssim \ell(I)^{n/2} \| \phi_I \|_2 \ell(I)^{-n/2}=1.
\end{split}
\end{equation}
By the finite superposition of $\supp \phi_I$ for $ I \in \DD_k,$ \eqref{bibound} implies that
$$\left| \sum_{I \in \DD_k (\Rn)} \beta_I \phi_I(z)\right|\lesssim 1.$$
Therefore by \eqref{5notes} we deduce that
\begin{equation}
\label{6}
\left| \sum_{I \in \wt \DD_k(\Rn)} \sum_{J \subset I} a_J \psi_J (z) \right|\lesssim 1.
\end{equation}

We will now prove that
$$
\left| \wt f(z)-\sum_{I \in \wt \DD_k(\Rn)} \sum_{J \subset I} a_J \psi_J (z) \right| \lesssim 1.
$$
Together with \eqref{6}, this shows that $|\wt f(z)| \lesssim 1$ and proves the lemma. We have
\begin{equation}
\begin{split}
\left| \wt f(z)-\sum_{I \in \wt \DD_k(\Rn)} \sum_{J \subset I} a_J \psi_J (z) \right|&=\left|\sum_{\substack{I \in\cF_0:\\ 5I \cap I_0 \neq \varnothing}} \sum_{J \subset I} a_J \psi_J (z)-\sum_{\substack{I \in \DD_k (\Rn):\\ 5I \cap I_0 \neq \varnothing}} \sum_{J \subset I} a_J \psi_J (z)\right|\\
& \lesssim \sum_{J \in A_1 \bigtriangleup A_2} |a_J \psi_J (z)|,
\end{split}
\end{equation}
where
$$A_1=\{J \in \wt\DD(\Rn): J \subset I, \mbox{ for some } I \in \cF_0\mbox{ such that } 
5I \cap I_0 \neq \varnothing\}$$
and
$$A_2=\{J \in \wt\DD(\Rn): J \subset I, \mbox{ for some } I \in \DD_k(\Rn)\mbox{ such that } 5I \cap I_0 \neq \varnothing\}.$$
It follows as in \eqref{bibound} that $\|a_J \psi_J\|_\infty \lesssim 1$. Therefore,
$$
\left| \wt f(z)-\sum_{I \in \wt \DD_k(\Rn)} \sum_{J \subset I} a_J \psi_J (z) \right| \lesssim \# A\lesssim1.
$$
This follows from the fact that if $I \in\cF_0$ such that $5I \cap I_0 \neq \varnothing$ then $\ell(I)  \approx \ell(I_0)$.
\end{proof}

\begin{lemma}\label{lemi22}
We have
$$I_{22}\lesssim\mu(R).$$
\end{lemma}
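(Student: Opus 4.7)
The starting point is Lemma~\ref{lemma5}, which provides $\|f_{x,r}\|_\infty\lesssim 1$ together with the support localization $\supp f_{x,r}\subset\bigcup_{I\in\cT(x,r)} 3P(\hat I)$. These two facts together yield the pointwise estimate
$$
|G_{r,2}(x)|\;\lesssim\;\frac{1}{r^n}\sum_{I\in\cT(x,r)}\mu(3P(\hat I)).
$$
Since $\{P(\hat I):I\in\cT(x,r)\}\subset\widetilde\cG(x,r)$ and these cubes all sit inside a fixed dilate of $Q$ (because $\ell(P(\hat I))\approx r\ell(\hat I)\lesssim r/50$), Lemma~\ref{lemma4} bounds the right-hand side by $\mu(aQ)/r^n\lesssim 1$, so that $|G_{r,2}(x)|\lesssim 1$ uniformly.

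The first step is to use this uniform bound to linearize the square through $|G_{r,2}(x)|^2\lesssim|G_{r,2}(x)|$, reducing the task to controlling
$$
\sum_{Q\in\DD(R)}\frac{1}{\ell(Q)^{n+1}}\int_Q\int_{\ell(Q)}^{2\ell(Q)}\sum_{I\in\cT(x,r)}\mu(3P(\hat I))\,dr\,d\mu(x).
$$
The second step is a Fubini reindexing by the cube $P=P(\hat I)$: for each fixed $P\in\DD$, one computes the $(x,r)$-measure of the set where $P=P(\hat I)$ for some $I\in\cT(x,r)$. The geometric constraints are that $\ell(P)\approx r\ell(I)$ and that $P$ must intersect a thin shell of thickness $\ell(P)$ around one of the spheres $\partial B(x,r)$ or $\partial B(x,2r)$. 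Together with $n$-AD-regularity (applied to a thin annulus around a point of $\supp(\mu)$) this should give an $(x,r)$-measure of order $\ell(Q)^{n-1}\ell(P)^2$, hence a contribution per pair $(Q,P)$ of order $\mu(P)\ell(P)/\ell(Q)$. Summing over $Q\supset aP$ via the telescoping identity $\sum_{Q\supset P}\ell(P)/\ell(Q)\lesssim 1$ then yields a total contribution of order $\mu(P)$ for each stopping cube $P$ that actually appears.

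The final step is to bound $\sum_P\mu(P)$ over the relevant stopping cubes. Each such $P=P(\hat I)$ with $I\in\cT(x,r)\subset\cF_0$ corresponds, through the definition of $\cF_0$ and the function $d(\cdot)$, to a nearby cube $P_I\in\cG_0$ of size $\approx r\ell(I)\approx\ell(P)$. The defining condition $\sum_{S:P_I\subset S\subset 1000Q}\alpha(100S)\leq\delta$ of $\cG_0$, combined with the Carleson packing $\sum_{Q\subset R}\alpha(Q)^2\mu(Q)\lesssim\mu(R)$ from Theorem~\ref{teotol}, should produce the desired packing $\sum_P\mu(P)\lesssim\mu(R)$.

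The hard part will be the Fubini accounting in the second step --- extracting the factor $\ell(P)/\ell(Q)$ rigorously, which requires both the sphere-intersection constraint and an $n$-AD-regular thin-shell measure estimate --- and then verifying that the stopping cubes $P$ aggregate through the $\cG_0$/$\alpha$-number link without logarithmic blow-up across dyadic scales.
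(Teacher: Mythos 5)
Your setup follows the paper for the first half: the pointwise estimate $|G_{r,2}(x)|\lesssim\ell(Q)^{-n}\sum_{I\in\cT(x,r)}\mu(P(\hat I))$ from Lemma~\ref{lemma5}, the $\ell(P)$-gain from the sphere constraint on the $r$-integral, and the telescoping in $Q\supset P$ are all there. (Your claimed $(x,r)$-measure $\ell(Q)^{n-1}\ell(P)^2$ is inconsistent with the $\mu(P)\ell(P)/\ell(Q)$ per-pair bound you then state --- only the $r$-variable is constrained by the sphere, and the $x$-integral just contributes $\mu(Q)\approx\ell(Q)^n$ --- but this is a local slip and does not affect the claimed conclusion.)

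The genuine gap is in the last step, and it runs in the wrong direction. You argue that each stopping cube $P(\hat I)$, $I\in\cT(x,r)\subset\cF_0$, corresponds to a nearby $P_I\in\cG_0$, and you then invoke the \emph{defining} condition of $\cG_0$, namely $\sum_{S:\,P_I\subset S\subset 1000Q}\alpha(100S)\le\delta$, to get Carleson packing. But that is a \emph{smallness} condition; it is precisely what cannot produce a packing bound. What the argument actually needs, and what the paper establishes, is the opposite inequality: for $I\in\cF_0$ one has $d(z)\ge 5000\,r\,\diam(I)$ on $r\cdot I$, hence $\ell(y)\gtrsim\ell(P(\hat I))$ for some $y\in P(\hat I)$, and from the definition of $\ell(\cdot)$ this forces a descendant $U$ of $P(\hat I)$ of comparable size to have \emph{violated} the $\cG_0$ threshold, so that
$$
\sum_{S\in\DD:\,P(\hat I)\subset S\subset 1000Q}\alpha(100S)\;\ge\;c\,\delta.
$$
Moreover, even granting this lower bound, your plan to linearize via $|G_{r,2}|^2\lesssim|G_{r,2}|$ and then directly pack $\sum_P\mu(P)\lesssim\mu(R)$ hits exactly the logarithmic blow-up you flag at the end: the $\cG_0$ threshold is on a \emph{linear} $\alpha$-sum, so passing to $\alpha^2$ by Cauchy--Schwarz costs the number of intervening scales, and Theorem~\ref{teotol} alone does not recover a Carleson bound on the family of $P(\hat I)$'s. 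The paper never discards the square. Instead it keeps $|G_{r,2}(x)|^2$, and multiplies and divides by the lower bound above to obtain
$$
|G_{r,2}(x)|\;\lesssim_\delta\;\frac{1}{\ell(Q)^n}\sum_{P\in\wt\cG(x,r)}\mu(P)\sum_{S\in\DD:\,P\subset S\subset 1000Q}\alpha(100S),
$$
which has exactly the shape of the bound~\eqref{4} for $G_{r,1}$; the estimate for $I_{22}$ then follows from the same Cauchy--Schwarz plus $\ell(P)/\ell(Q)$ geometric-series smoothing already carried out for $I_{21}$. That insertion of an $\alpha$-factor is the step missing from your outline, and it is what makes the argument close.
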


\begin{proof}
Lemma \ref{lemma5} implies that 
$$|G_{r,2}(x)| \lesssim  \frac1{\ell(Q)^n} \int |f_{x,r}(y)| d\mu (y) \lesssim\frac1{\ell(Q)^n} \sum_{I \in \cT (x,r)} \mu (P (\hat{I})).$$
As noted earlier, for $I\in\cT(x,r)$,  the parent of $I$, denoted by $\hat{I}$, belongs to $\cG (x,r)$.
Observe also that
$$r\,\diam(I)\leq \frac1{5000}\inf_{z\in r\cdot I}d(z),$$
because $\cT(x,r)\in\FF_0$. So every $z'\in r\cdot I\subset r\cdot\hat I$ satisfies $d(z')\geq5000 \,r\,\diam(I)
=2500\,r\,\diam(\hat I)$. This implies that $d(z)\gtrsim r\,\ell(I)$ for all $z\in r\cdot\hat I$, because $d(\cdot)$ is $3$-Lipschitz.
As a consequence, by the definition of $d(\cdot)$, there exists some $y\in P(\hat I)$
such that $\ell(y)\gtrsim r\,\ell(I)\approx\ell(P(\hat I))$.
Then it follows easily that there exists some descendant $U$ of $P(\hat{I})$ with $\ell(U) \approx \ell(P(\hat{I}))$ such that 
\begin{equation*}
\sum_{S \in \DD: U \subset S \subset 1000 Q} \a(100 S) \geq \delta.
\end{equation*}
This clearly implies that either
$$\sum_{S \in \DD: 	P(\hat{I}) \subset S \subset 1000 Q} \a(100 S) \geq \frac\delta2,$$
or 
$$\sum_{S \in \DD: U \subset S \subset P(\hat{I})} \a(100 S) \geq \frac\delta2.$$
Since $\ell(U) \approx \ell(P(\hat{I}))$, from the second condition one infers that $\a(100 P(\hat{I})) \geq c \delta.$ Hence in either case, for some small constant $c>0$,
\begin{equation}
\sum_{S \in \DD: P \subset S \subset 1000Q} \a (100 S) \geq c \delta.
\end{equation}
Therefore,
\begin{equation*}
\begin{split}
|G_{r,2}(x)| &\lesssim_\delta \frac1{\ell(Q)^n} \sum_{I \in \cT (x,r)} \mu (P (\hat{I}))\sum_{S \in \DD: P \subset S \subset 1000Q} \a (100 S)\\
&\lesssim \frac{1}{\ell(Q)^n}\sum_{P \in  \wt\cG (x,r)} \mu (P)\sum_{S \in \DD: P \subset S \subset 1000Q} \a (100 S).
\end{split}
\end{equation*}
Notice that
$$\sum_{P \in  \wt\cG (x,r)} \frac{\ell(P)^n}{\ell(Q)^n}\sum_{S \in \DD: P \subset S \subset 1000Q} \a (100 S)$$
is smaller, modulo the constants $1000$ and $100$, than the right side in \eqref{4}. Therefore by the same arguments we used for $I_{21}$ we get 
$I_{22} \lesssim \mu(R).$
\end{proof}

From Lemmas \ref{lemi21} and \ref{lemi22} we deduce that 
$I_2\lesssim \mu(R).$
Together with Lemma \ref{lemi1} this completes the proof of Theorem \ref{teoq1'}.

\vv


\section{Proof of Proposition \ref{prop1}}\label{sec_prop}

We will only prove the equivalence (a)$\Leftrightarrow$(c), as (a)$\Leftrightarrow$(b) is very similar.

By Theorem \ref{teoq1}, 
it is clear that uniform $n$-rectifiability implies the boundedness of the square function in (c) for any positive integer $k$. As for the converse, next we show that Lemma \ref{lemcompact} holds with $\wt\Delta_{\mu,\vphi}$ replaced by $\wt\Delta^k_{\mu,\vphi}$. Recall 
$$
\wt\Delta^k_{\mu,\vphi}(x,t) = \int \partial^k_\vphi(x-y,t)\ d\mu(y),\;\mbox{ where } \;\partial^k_\vphi(x,t) = t^k\partial_t^k\vphi_t(x).
$$

\begin{lemma}\label{cor_lemcompact}
Let $k\geq 1$ and let $\mu$ be an $n$-AD-regular measure such that $0\in\supp(\mu)$. For all
$\ve>0$ there exists  $\delta>0$ such that
if 
$$\int_{\delta}^{\delta^{-1}}\!\!
 \int_{x\in \bar B(0,\delta^{-1})} |\wt \Delta^k_{\mu,\vphi} (x,r)|\,d\mu(x)\,dr \leq \delta,$$
then
$$\dist_{B(0,1)}(\mu,\wt\UU(\vphi,c_0)) <\ve.$$
\end{lemma}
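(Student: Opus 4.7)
The plan is to mimic almost line-for-line the proof of Lemma \ref{lemcompact}, introducing one new step to handle the fact that vanishing of $\wt\Delta^k_{\mu,\vphi}$ gives, a priori, only that a $k$-th $t$-derivative is zero rather than an ordinary derivative. First I would argue by contradiction: suppose there is no such $\delta$, so that there is a sequence $\mu_m$ of $n$-AD-regular measures with $0\in\supp(\mu_m)$ satisfying
\begin{equation*}
\int_{1/m}^{m}\!\int_{\bar B(0,m)} |\wt\Delta^k_{\mu_m,\vphi}(x,r)|\,d\mu_m(x)\,dr\leq \frac{1}{m},\qquad
\dist_{B(0,1)}(\mu_m,\wt\UU(\vphi,c_0))\geq \ve.
\end{equation*}
Passing to a weak-$*$ subsequential limit $\mu$, which is again $n$-AD-regular with constant $c_0$ and contains $0$ in its support, I would repeat the truncation/equicontinuity argument around \eqref{tight}--\eqref{eqesp84}, replacing $\psi_t(x)=\partial_\vphi(x,t)$ by $\psi^k_t(x)=\partial^k_\vphi(x,t)=t^k\partial_t^k\vphi_t(x)$. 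The decay conditions on $\vphi$ imply the same decay for $\psi^k_t$ uniformly in $t$ in compact subintervals of $(0,\infty)$, and $\{y\mapsto \widetilde\chi(x-y)\psi^k_t(x-y): (t,x)\in K\}$ remains an equicontinuous family of compactly supported continuous functions. This yields
\begin{equation*}
\int_0^\infty\!\int_{\R^d} |\wt\Delta^k_{\mu,\vphi}(x,r)|\,d\mu(x)\,dr = 0.
\end{equation*}

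The only new point is to pass from this vanishing to $\mu\in\wt\UU(\vphi,c_0)$. For $\mu$-a.e.\ $x$ one has $\wt\Delta^k_{\mu,\vphi}(x,r)=0$ for a.e.\ $r>0$, and by continuity of $r\mapsto \vphi_r*\mu(x)$ and of its $t$-derivatives (which follows from the smoothness and decay of $\vphi$), this holds for every $r>0$. Hence $\partial_t^k(\vphi_t*\mu)(x)=0$ for all $t>0$, so $t\mapsto \vphi_t*\mu(x)$ is a polynomial of degree at most $k-1$ in $t$. On the other hand, the $n$-AD-regularity of $\mu$ and the decay of $\vphi$ give $|\vphi_t*\mu(x)|\lesssim 1$ uniformly in $t>0$ (decompose the convolution into annuli around $x$). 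A polynomial in $t$ which is bounded on $(0,\infty)$ must be constant, so $\vphi_t*\mu(x)$ is independent of $t$ for $\mu$-a.e.\ $x$.

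From here I would copy the final part of the proof of Lemma \ref{lemcompact} verbatim: fix $R>0$ and $x,y$ in the $\mu$-full set $G$ on which the previous conclusion holds; for any $R_2>R$ we have $\vphi_R*\mu(x)=\vphi_{R_2}*\mu(x)$ and the same for $y$, so
\begin{equation*}
|\vphi_R*\mu(x)-\vphi_R*\mu(y)|=|\vphi_{R_2}*\mu(x)-\vphi_{R_2}*\mu(y)|\leq \|\nabla(\vphi_{R_2}*\mu)\|_\infty\,|x-y|\lesssim \frac{|x-y|}{R_2},
\end{equation*}
by the same annular estimate as in \eqref{eqig1}. Letting $R_2\to\infty$ and using continuity of $\vphi_R*\mu$ together with the density of $G$ in $\supp(\mu)$, it follows that $\vphi_R*\mu$ is constant on $\supp(\mu)$, so $\mu\in\wt\UU(\vphi,c_0)$; this contradicts $\dist_{B(0,1)}(\mu,\wt\UU(\vphi,c_0))\geq\ve$, which is preserved in the weak-$*$ limit by \cite[Lemma 14.13]{Mattila}. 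The main obstacle, modest as it is, is the polynomial-in-$t$ step: it replaces the one-line integration $\vphi_{R_1}*\mu(x)-\vphi_{R_2}*\mu(x)=\int_{R_1}^{R_2}\wt\Delta_{\mu,\vphi}(x,r)\,dr/r$ of Lemma \ref{lemcompact} by the observation that a bounded polynomial is constant, and otherwise the proof is identical.
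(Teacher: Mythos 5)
Your proposal is correct and follows essentially the same route as the paper: extract a weak-$*$ subsequential limit, show $\wt\Delta^k_{\mu,\vphi}$ vanishes by the same truncation/equicontinuity argument, observe that $\partial_r^k(\vphi_r*\mu)(x)=0$ forces $r\mapsto\vphi_r*\mu(x)$ to be a polynomial of degree $\leq k-1$, use $n$-AD-regularity to bound it and hence conclude it is constant, and then finish exactly as in Lemma \ref{lemcompact}. The ``bounded polynomial must be constant'' step you flag as the new ingredient is precisely the one the paper uses.
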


\begin{proof}
Suppose that there exists an $\ve>0$, and for each $m\geq 1$ there exists an $n$-AD-regular
 measure $\mu_m$ such that $0\in\supp(\mu_m)$,
\begin{equation}\label{cor_eqass32}
\int_{1/m}^{m}
 \int_{x\in \bar B(0,m)} |\wt \Delta^k_{\mu_m,\vphi} (x,r)|\,d\mu_m(x)\,dr \leq \frac1m,
\end{equation}
and
\begin{equation}\label{cor_equu12}
\dist_{B(0,1)}(\mu_m,\wt\UU(\vphi,c_0)) \geq\ve.
\end{equation}

By \eqref{ad} we can replace $\{\mu_m\}$ by a subsequence converging  weak * (i.e.\ when tested against 
compactly supported continuous functions) to a measure $\mu$ and it is easy to check that $0 \in \supp(\mu)$ and that $\mu$ is also $n$-dimensional AD-regular with constant $c_0$. We claim that 
\begin{equation*}
\int_{0}^\infty\!
 \int_{x\in\R^d} |\wt \Delta^k_{\mu,\vphi} (x,r)|\,d\mu(x)\,dr=0.
 \end{equation*}\
The proof of this statement is elementary and is almost the same as the analogous one in Lemma \ref{lemcompact}.
We leave the details for the reader.


Our next objective consists in showing that $\mu\in \wt\UU(\vphi,c_0)$. To this end, denote by $G$ the subset of those points $x\in\supp(\mu)$ such that
$$\int_{0}^\infty\!
 |\wt \Delta^k_{\mu,\vphi} (x,r)|\,dr=0.$$
It is clear that $G$ has full $\mu$-measure. For $x\in G$ and $r>0$, consider the function
$f_x(r) = \vphi_r*\mu(x)$.
Notice that $f_x:(0,+\infty)\to\R$ is $C^\infty$ and satisfies
$$\partial_r^k f_x(r) = (\partial_r^k \vphi_r) *\mu(x) = r^{-k}\,\wt \Delta^k_{\mu,\vphi} (x,r)
=0$$
for a.e.\ $r>0$. Thus $f_x$ is a polynomial on $r$ of degree at most $k-1$, whose coefficients may depend on $x$.
However, since $\mu$ is $n$-AD-regular, it follows easily that there exists some constant $c$ such that
$$|f_x(r)| = |\vphi_r*\mu(x)|\leq c\quad\mbox{for all $r>0$.}$$
Thus $f_x$ must be constant on $r$. So for all $x\in G$ and $0<R_1\leq R_2$,
$$\vphi_{R_1} *\mu(x) = \vphi_{R_2}*\mu(x).$$
This is the same estimate we obtained in \rf{eqig2} in Lemma \ref{lemcompact}. So proceeding exactly
in the same way as there we deduce then that
$$\vphi_{R_1} *\mu(x) = \vphi_{R_2}*\mu(y) \quad\mbox{for all $x,y\in\supp\mu$ and all $0<R_1\leq R_2$.}$$
That is,
 $\mu\in
 \wt\UU(\vphi,c_0)$.
 However, by condition \rf{cor_equu12}, letting $m\to\infty$, we have 
$$\dist_{B(0,1)}(\mu,\wt\UU(\vphi,c_0)) \geq\ve,$$
because $\dist_{B(0,1)}(\cdot,\wt\UU(\vphi,c_0)) $ is continuous under the weak * topology.
So $\mu\not\in\wt\UU(\vphi,c_0)$, which is a contradiction.
\end{proof}

Applying the previous lemma and arguing in the same way as in Section \ref{sec:bddur} one proves
the implication (c)$\Rightarrow$(a) of Proposition \ref{prop1}.


\end{document}